\declaretheorem[numberwithin=section]{theorem}
\declaretheorem[sibling=theorem]{proposition}
\declaretheorem[sibling=theorem]{definition}
\declaretheorem[sibling=theorem]{corollary}
\declaretheorem[sibling=theorem]{lemma}
\declaretheorem[sibling=theorem,style=remark]{remark}
\declaretheorem[sibling=theorem,style=remark]{example}
\numberwithin{equation}{section} 
\def\R{\mathbb R}
\def\C{\mathbb C}
\def\Z{\mathbb Z}
\def\N{\mathbb N}
\def\P{\mathbb P}
\def\E{\mathbb E}
\def\Var{\mathrm{Var}}
\def\u{\mathfrak{u}}
\def\1{\mathbbm 1}
\def\e{\epsilon}
\def\d{\delta}
\def\t{\tau}
\def\M{\mathbb{M}}
\def\U{\mathbb{U}}
\def\GL{\mathbb{GL}}
\def\A{\mathscr{A}}
\def\u{\mathfrak{u}}
\def\tensor{\otimes}
\def\supp{\mathrm{supp}\,}
\def\del{\partial}
\newcommand{\tr}{\mathrm{tr}}
\newcommand{\Tr}{\mathrm{Tr}}
\newcommand{\mx}[1]{\mathbf{#1}}
\renewcommand\emptyset{\varnothing}
\def\t{\tau}
\long\def\symbolfootnote[#1]#2{\begingroup%
\def\thefootnote{\fnsymbol{footnote}}\footnote[#1]{#2}\endgroup}
\begin{document}

\title{Strong Convergence of Unitary Brownian Motion}
\author{
  Beno\^\i{}t Collins\thanks{Supported in part by Kakenhi, an NSERC discovery grant, Ontario's
  ERA and ANR-14-CE25-0003} \\
  Kyoto University \\
  \& University of Ottawa \&  CNRS Lyon I\\
  585 King Edward, Ottawa, ON K1N 6N5 \\
  \texttt{bcollins@uottawa.ca}
\and
Antoine Dahlqvist \\
Technische Universit\"at Berlin, Fakult\"at II  \\
Institut f\"ur Mathematik, Sekr. MA 7-5 \\
Strasse des 17. Juni 136, 10623 Berlin, Germany \\
\texttt{antoine.dahlqvist@gmail.com}
\and
Todd Kemp\thanks{Supported in part by NSF CAREER Award DMS-1254807} \\
Department of Mathematics \\
University of California, San Diego \\
La Jolla, CA 92093-0112 \\
\texttt{tkemp@math.ucsd.edu}
}

\date{\today} 

\maketitle

\begin{abstract} The Brownian motion $(U^N_t)_{t\ge 0}$ on the unitary group converges, as a process, to the free unitary Brownian motion $(u_t)_{t\ge 0}$ as $N\to\infty$.  In this paper, we prove that it converges {\em strongly} as a process: not only in distribution but also in operator norm.  In particular, for a fixed time $t>0$, we prove that the spectral measure has a {\em hard edge}: there are no outlier eigenvalues in the limit.  We also prove an extension theorem: any strongly convergent collection of random matrix ensembles independent from a unitary Brownian motion also converge strongly {\em jointly} with the Brownian motion.  We give an application of this strong convergence to the Jacobi process. \end{abstract}

\tableofcontents

\section{Introduction\label{section Introduction}}

This paper is concerned with convergence of the noncommutative distribution of the standard Brownian motion on unitary groups.  Let $\M_N$ denote the space of $N\times N$ complex matrices, and let $\tr = \frac1N\Tr$ denote the normalized trace.  We denote the unitary group in $\M_N$ as $\U_N$.  The Brownian motion on $\U_N$ is the diffusion process $(U^N_t)_{t\ge 0}$ started at the identity with infinitesimal generator $\frac12\Delta_{\U_N}$, where $\Delta_{\U_N}$ is the left-invariant Laplacian on $\U_N$.  (This is uniquely defined up to a choice of $N$-dependent scale; see Section \ref{section UBM def} for precise definitions, notation, and discussion.)

For each fixed $t\ge 0$, $U^N_t$ is a random unitary matrix, whose spectrum $\mathrm{spec}(U^N_t)$ consists of $N$ eigenvalues $\lambda_1(U^N_t),\ldots,\lambda_N(U^N_t)$.  The {\em empirical spectral distribution}, also known as the {\em empirical law of eigenvalues}, of $U^N_t$ (for a fixed $t\ge0$) is the random probability measure $\mathrm{Law}_{U^N_t}$ on the unit circle $\U_1$ that puts equal mass on each eigenvalue (counted according to multiplicity):
\[ \mathrm{Law}_{U^N_t} = \frac1N\sum_{j=1}^N \delta_{\lambda_j(U^N_t)}. \]
In other words: $\mathrm{Law}_{\U^N_t}$ is the random measure determined by the characterization that its integral against test functions $f\in C(\U_1)$ is given by
\begin{equation} \label{e.Law.UN} \int_{\U_1} f\,d\mathrm{Law}_{U^N_t} = \frac1N\sum_{j=1}^N f(\lambda_j(U^N_t)). \end{equation}
In \cite{Biane1997a}, Biane showed that this random measure converges weakly almost surely to a deterministic limit probability measure $\nu_t$
\begin{equation} \label{e.nu_t=limit} \lim_{N\to \infty}\int_{U_1} f\,d\mathrm{Law}_{U^N_t} = \int_{\U_1} f\,d\nu_t \; a.s. \qquad f\in C(\U_1). \end{equation}
The measure $\nu_t$ can be described as the spectral measure of a {\em free unitary Brownian motion} (cf.\ Section \ref{section free prob}).  For $t>0$, $\nu_t$ possesses a continuous density that is symmetric about $1\in\U_1$, and is supported on an arc strictly contained in the circle for $0<t<4$; for $t\ge 4$, $\supp \nu_t = \U_1$.

The result of \eqref{e.nu_t=limit} is a bulk result: it does not constrain the behavior of individual eigenvalues.  The additive counterpart is the classical Wigner's semicircle law.  Let  $X^N$ is a Gaussian unitary ensemble ($\mathrm{GUE}^N$), meaning that the joint density of entries of $X^N$ is proportional to $\exp\left(-\frac{N}{2}\Tr(X^2)\right)$. Alternatively, $X^N$ may be described as a {\em Gaussian Wigner matrix}, meaning it is Hermitian, and otherwise has i.i.d.\ centered Gaussian entries of variance $\frac1N$.  Wigner's law states that the empirical spectral distribution converges weakly almost surely to a limit: the semicircle distribution $\frac{1}{2\pi}\sqrt{(4-x^2)_+}\,dx$, supported on $[-2,2]$ (cf.\ \cite{Wigner}).  This holds for all Wigner matrices, independent of the distribution of the entries, cf.\ \cite{Bai1999}.  But this does not imply that the spectrum of $X^N$ converges almost surely to $[-2,2]$; indeed, it is known that this ``hard edge'' phenomenon occurs iff the fourth moments of the entries of $X^N$ are finite (cf.\ \cite{BaiYin1988}).

Our first major theorem is a ``hard edge'' result for the empirical law of eigenvalues of the Brownian motion $\U^N_t$.  Since the spectrum is contained in the circle $\U_1$ instead of discussion the ``largest'' eigenvalue, we characterize convergence in terms of Hausdorff distance $d_H$: the Hausdorff distance between two compact subsets $A,B$ of a metric space is defined to be 
\[ d_H(A,B) = \inf\{\e\ge 0\colon A\subseteq B_\e \;\&\; B\subseteq A_\e\}, \]
where $A_\e$ is the set of points within distance $\e$ of $A$. It is easy to check that the ``hard edge'' theorem for Wigner ensembles is equivalent to the statement that $d_H(\mathrm{spec}(X^N),[-2,2])\to 0 \; a.s.$ as $N\to\infty$; for a related discussion, see Corollary \ref{c.strong.conv} and Remark \ref{r.strong.conv} below.

\begin{theorem} \label{t.main.1} Let $N\in\N$, and let $(U^N_t)_{t\ge 0}$ be a Brownian motion on $\U_N$.  Fix $t\ge 0$.  Denote by $\nu_t$ the law of the free unitary Brownian motion, cf.\ Theorem \ref{t.nu.t}.  Then
\[ d_H\left(\mathrm{spec}(U^N_t),\supp\nu_t\right)\to 0\;\; a.s. \;\text{as}\; N\to\infty. \]
\end{theorem}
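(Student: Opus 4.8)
The plan is to deduce the Hausdorff-distance convergence from a \emph{strong convergence} statement for $(U^N_t)$ together with the basic fact that the spectrum of a single operator is captured, in the limit, by its $L^p$-norms. Concretely, recall that $d_H(\mathrm{spec}(a_N),\mathrm{spec}(a))\to 0$ follows once one knows (i) no eigenvalue of $U^N_t$ escapes a neighbourhood of $\supp\nu_t$, and (ii) the eigenvalues of $U^N_t$ asymptotically fill out $\supp\nu_t$. Part (ii) is immediate from Biane's weak convergence \eqref{e.nu_t=limit}: since $\mathrm{Law}_{U^N_t}\to\nu_t$ weakly a.s., every point of $\supp\nu_t$ is an a.s.\ limit of eigenvalues, so $\sup_{z\in\supp\nu_t}\mathrm{dist}(z,\mathrm{spec}(U^N_t))\to 0$ a.s. The real content is part (i): $\sup_{z\in\mathrm{spec}(U^N_t)}\mathrm{dist}(z,\supp\nu_t)\to 0$ a.s., i.e.\ a hard-edge/no-outlier statement.

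To prove (i) I would show that for every continuous $f$ on $\U_1$ supported on a set disjoint from $\supp\nu_t$, eventually $\mathrm{spec}(U^N_t)$ avoids $\supp f$. The standard route is to build, for each arc $I\subseteq\U_1$ away from $\supp\nu_t$, a noncommutative polynomial (a Laurent polynomial $P(U^N_t,(U^N_t)^*)$) whose operator norm converges to $\|P(u_t,u_t^*)\|$ by strong convergence, and which dominates the indicator of that arc in the limiting spectral calculus. Equivalently: if $u_t$ denotes the free unitary Brownian motion in a $C^*$-probability space with faithful trace, then $\mathrm{spec}(u_t)=\supp\nu_t$, and strong convergence $\|P(U^N_t,(U^N_t)^*)\|\to\|P(u_t,u_t^*)\|$ for all noncommutative polynomials $P$ forces $d_H(\mathrm{spec}(U^N_t),\mathrm{spec}(u_t))\to 0$. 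This last implication is by now routine (it is the same argument as in Haagerup--Thorbj\o rnsen and in Collins--Male): pick $P$ so that $P(u_t,u_t^*)=g(u_t)$ for a nonnegative $g$ vanishing exactly on $\supp\nu_t$ and positive on the complementary arc; then $\|g(U^N_t)\|\to\|g(u_t)\|$, while $\|g(U^N_t)\|\ge g$ evaluated at any eigenvalue in the forbidden region, giving the one-sided containment after passing through a countable exhausting family of arcs with rational endpoints to upgrade ``in probability / subsequential a.s.'' to full a.s.\ convergence.

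Thus the entire theorem reduces to the genuinely new ingredient, which I expect is proved in the body of the paper: the \emph{strong convergence of $U^N_t$ to $u_t$} for fixed $t$, i.e.\ $\|P(U^N_t,(U^N_t)^*)\|\to\|P(u_t,u_t^*)\|$ a.s.\ for every noncommutative Laurent polynomial $P$. (In fact the abstract promises the stronger process-level and joint versions.) To establish that, the approach I would follow is the now-standard ``linearization + operator-valued / Schwinger--Dyson estimate'' scheme: reduce the norm of an arbitrary polynomial to the norm/invertibility of a scalar-coefficient linear pencil $L = a_0\otimes 1 + \sum_j a_j\otimes U^N_t + \sum_j a_j'\otimes (U^N_t)^*$ via Anderson's linearization trick, then compare the resolvent $(z - L)^{-1}$ of the finite-$N$ pencil to that of the limiting pencil built from $u_t$. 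The comparison is controlled by the generator of the Brownian motion: differentiating $\E[\,\mathrm{tr}\otimes\mathrm{id}\,(z-L)^{-1}]$ in $t$ using the heat equation on $\U_N$ (the Makar-Limanov / Rains / Lévy--Maïda integration-by-parts identities for $\U_N$) produces a closed Schwinger--Dyson-type master equation, and the $1/N^2$ error terms are estimated via concentration of measure on $\U_N$ (log-Sobolev on the unitary group, giving Gaussian-type concentration for Lipschitz traces). The main obstacle, and the technical heart of the paper, will be establishing this resolvent comparison with quantitative $O(1/N^2)$ control uniformly on spectral parameters $z$ bounded away from the limiting spectrum — i.e.\ showing the finite-$N$ pencil is invertible there with bounded inverse, so that no norm can overshoot. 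Once strong convergence is in hand, the passage to $d_H(\mathrm{spec}(U^N_t),\supp\nu_t)\to 0$ a.s.\ is the soft post-processing sketched above.
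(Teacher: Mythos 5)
Your reduction of the Hausdorff statement to a one‐sided no‐outlier claim, combined with Biane's weak convergence for the other direction, is correct and matches the spirit of the paper. However, both the mechanism you propose for the no‐outlier step and the logical dependency you assume diverge substantially from what the paper actually does.

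First, the logical order in the paper is the \emph{reverse} of yours. You would prove strong convergence $\|P(U^N_t,(U^N_t)^*)\|\to\|P(u_t,u_t^*)\|$ first and then deduce Hausdorff convergence as soft post‐processing. In the paper, Theorem~\ref{t.main.1} is proved \emph{first}, and the single‐time strong convergence statement (Corollary~\ref{c.strong.conv}) is then derived from it (and Remark~\ref{r.strong.conv} observes that the two are in fact equivalent for a single unitary). So if you were to rely on ``the strong convergence proved in the body of the paper'' at this point, your argument would be circular; you would genuinely need the independent strong‐convergence proof you sketch.

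Second, and more importantly, the mechanism the paper uses has nothing to do with linearization, Schwinger--Dyson master equations, or concentration of measure. The paper's engine is the quantitative moment estimate of Theorem~\ref{t.moment.rate.conv}: $\bigl|\E\tr[(U^N_t)^n]-\int w^n\,\nu_t(dw)\bigr|\le t^2n^4/N^2$, with \emph{polynomial} dependence on the moment order $n$. This is proved by an interpolation/coupling argument comparing $\U_N$ and $\U_{2N}$ (Proposition~\ref{p.moment.rate.Cauchy}, via a block‐diagonal coupling $B^{2N}_s$ and Schur--Weyl duality, culminating in Duhamel‐type covariance estimates), then telescoping over $N2^k$. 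With that bound in hand, the paper tests against Sobolev bump functions $f\in H_5$ supported off $\supp\nu_t$ (Proposition~\ref{p.edge.1}), gets a matching $O(1/N^2)$ variance bound (Proposition~\ref{p.edge.2}), and closes the no‐outlier statement by Chebyshev plus Borel--Cantelli. The polynomial growth in $n$ is the crucial point: the paper explicitly remarks that the earlier bound $K(t,n)/N^2$ with $K(t,n)\sim\frac{tn^2}{2}e^{tn^2/2}$ from \cite{Kemp2013a} is far too large to test against Sobolev or even $C^\infty$ functions, so a linearization‐free, moment‐based route with polynomial control is what makes the theorem go through. Your proposed linearization + operator‐valued Schwinger--Dyson + log‐Sobolev concentration scheme is a plausible alternative paradigm in the style of Haagerup--Thorbj{\o}rnsen and Collins--Male, but it is not present anywhere in this paper, and carrying it out for the heat‐kernel pencil on $\U_N$ would be a substantial new technical project rather than a routine invocation.
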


\begin{remark} When $t\ge 4$, $\supp\nu_t = \U_1$, and Theorem \ref{t.main.1} is immediate; the content here is that, for $0\le t<4$, there are asymptotically {\em no eigenvalues} outside the arc in \eqref{e.supp.nu_t}.
\end{remark}

\begin{figure}[htbp]
\begin{center}
\includegraphics[scale=0.1965]{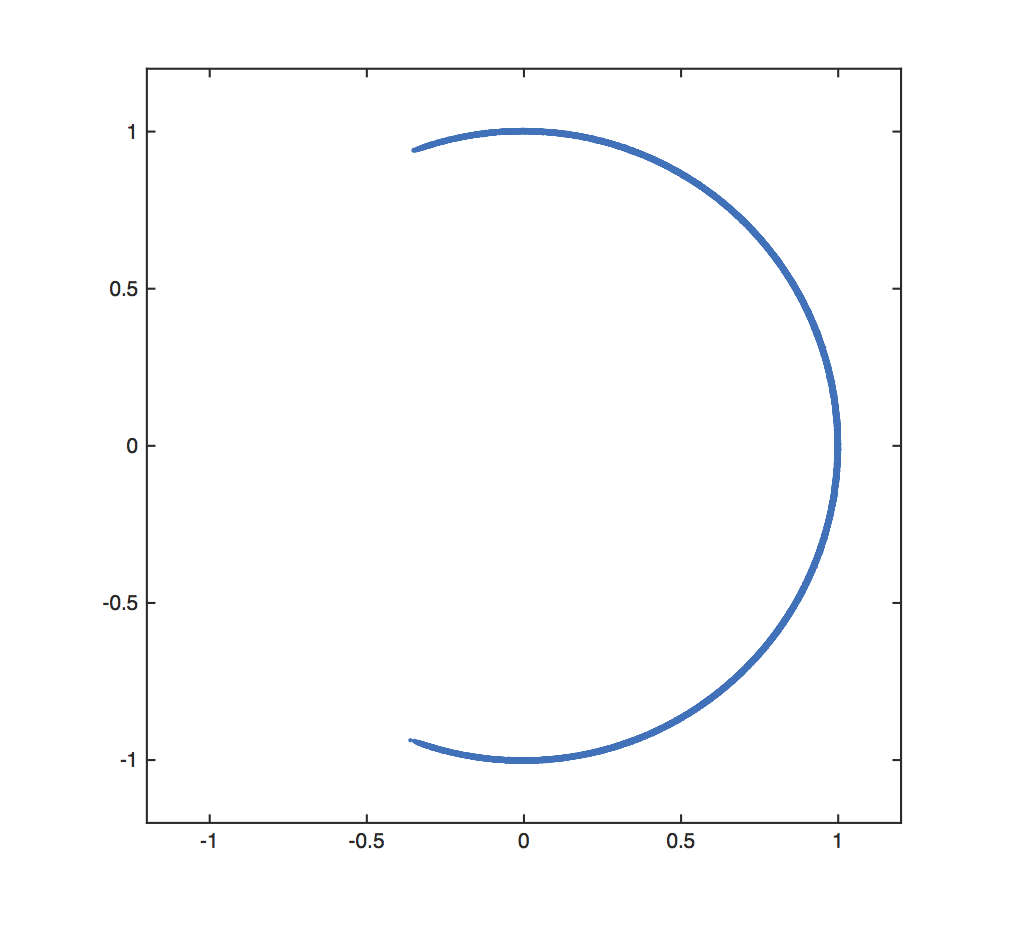}
\includegraphics[scale=0.208]{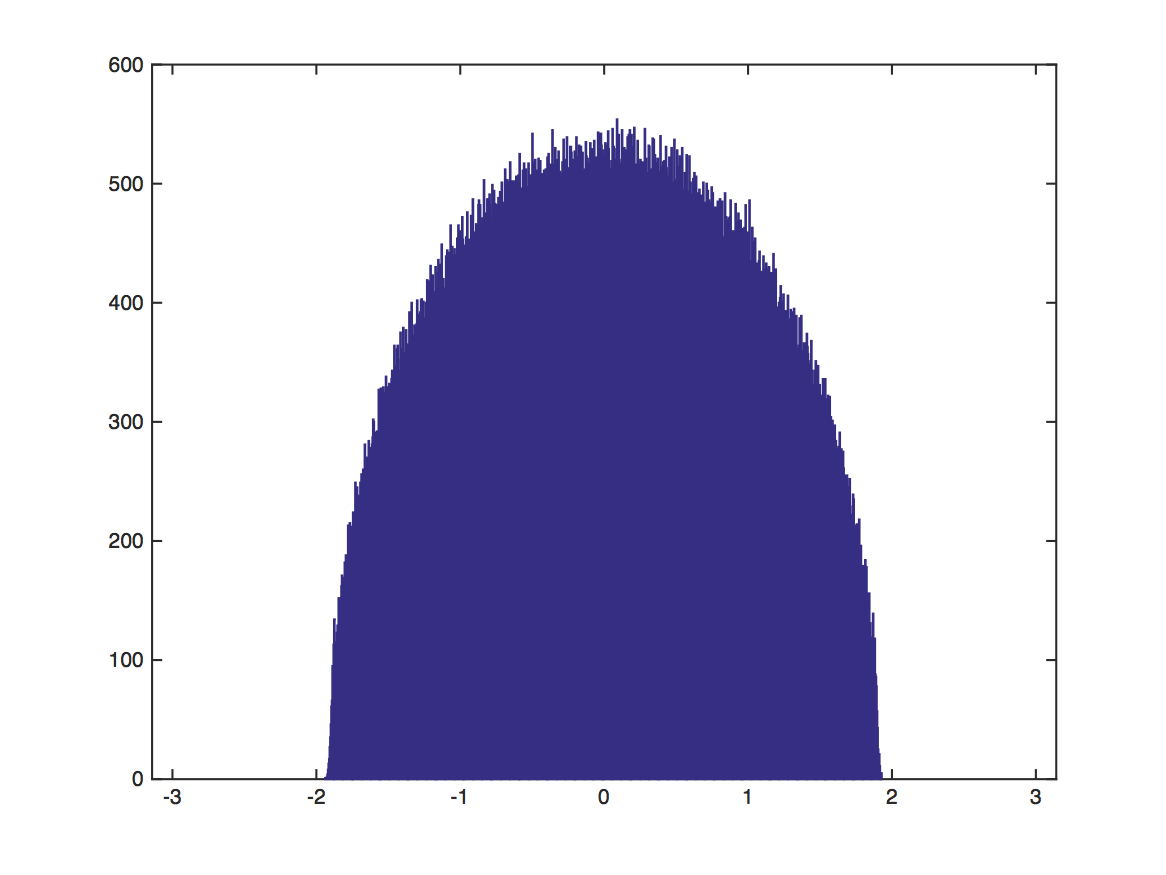}
\caption{\label{fig spec UBM} The spectrum of the unitary Brownian motion $U^N_t$ with $N=400$ and $t=1$.  These figures were produced from $1000$ trials.  On the left is a plot of the eigenvalues, while on the right is a $1000$-bin histogram of their complex arguments.  The argument range of the data is $[-1.9392,1.9291]$, as compared to the predicted large-$N$ limit range (to four digits) $[-1.9132,1.9132]$, cf.\ \eqref{e.supp.nu_t}.}
\end{center}
\end{figure}

To prove Theorem \ref{t.main.1}, our method is to prove sufficiently tight estimates on the rate of convergence of the moments of $U^N_t$.  We record the main estimate here, since it is  of independent interest.

\begin{theorem} \label{t.moment.rate.conv} Let $N,n\in\N$, and fix $t\ge 0$.  Then
\begin{equation} \label{e.moment.rate.conv} \left|\E\tr\!\big[(U^N_t)^n\big]- \int_{\U_1} w^n\,\nu_t(dw)\right| \le \frac{t^2n^4}{N^2}. \end{equation}
\end{theorem}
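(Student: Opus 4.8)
The plan is to control both $\E\tr[(U^N_t)^n]$ and its limit $\int_{\U_1} w^n\,\nu_t(dw)$ by the \emph{same} scalar ODE system, up to an $O(1/N^2)$ correction. The starting point is the heat-kernel characterization: writing $\Phi_N(t,n) = \E\tr[(U^N_t)^n]$, one applies the generator $\frac12\Delta_{\U_N}$ and uses the Lie-algebraic identity for how the Laplacian acts on the power sums $\tr(U^n)$. This is the Itô/heat-equation computation that goes back to Biane and Rains (and Lévy): $\frac{d}{dt}\Phi_N(t,n)$ is a quadratic expression in lower traces, of the form $-\frac n2\Phi_N(t,n) - \frac n2\sum_{k=1}^{n-1}\E[\tr((U^N_t)^k)\tr((U^N_t)^{n-k})] + (\text{lower-order in }N)$, where the exact coefficient of the $1/N^2$ term is again polynomial in $n$ and $k$. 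First I would record this ODE precisely, including the explicit $N$-dependence; this is where all the combinatorics of the unitary Laplacian enters.

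Next I would pass to the large-$N$ limit. Since $\mathrm{Law}_{U^N_t}$ converges almost surely (and hence in expectation) to $\nu_t$, the limit $\phi(t,n) := \int w^n\,\nu_t(dw) = \lim_N \Phi_N(t,n)$ satisfies the corresponding \emph{factorized} ODE $\frac{d}{dt}\phi(t,n) = -\frac n2\phi(t,n) - \frac n2\sum_{k=1}^{n-1}\phi(t,k)\phi(t,n-k)$, with $\phi(0,n)=1$; this is precisely the system Biane solved (the moments of free unitary Brownian motion, $\phi(t,n) = e^{-nt/2}\sum_{k=0}^{n-1}\frac{(-t)^k}{k!}n^{k-1}\binom{n}{k+1}$), but I will not need the closed form, only the recursive ODE and the crude bound $|\phi(t,n)|\le 1$. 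The key point is that the difference $D_N(t,n) := \Phi_N(t,n) - \phi(t,n)$ then satisfies an ODE obtained by subtracting the two systems: the linear term contributes $-\frac n2 D_N(t,n)$, the quadratic term contributes (after the standard $ab-a'b' = (a-a')b + a'(b-b')$ splitting) a sum of products of one $D_N$ factor with one bounded factor, plus a genuine $1/N^2$ source term, plus the covariance defect $\E[\tr(U^k)\tr(U^{n-k})] - \E[\tr(U^k)]\E[\tr(U^{n-k})] = \frac1{N^2}\Cov(\Tr(U^k),\Tr(U^{n-k}))$, which is also $O(n^2/N^2)$ by the known concentration/covariance estimates for the unitary Brownian motion (these can be proved by the same heat-equation method applied to second moments, or cited).

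Then I would close the estimate by induction on $n$ together with a Grönwall argument in $t$. Assuming $|D_N(s,k)| \le \frac{s^2 k^4}{N^2}$ for all $k<n$ and all $s\le t$, the ODE for $D_N(t,n)$ reads $\frac{d}{dt}|D_N| \le \frac n2 |D_N| + \frac nN^{-2}\cdot(\text{poly in }n,t)$; integrating from $0$ (where $D_N(0,n)=0$) and bounding $\int_0^t \frac n2 e^{n(t-s)/2}(\cdots)\,ds$ gives a bound of the shape $C\,t^2 n^4/N^2$ provided the constants and $n$-powers are tracked honestly. The main obstacle — and the part requiring genuine care rather than routine bookkeeping — is making the power of $n$ and the power of $t$ come out to exactly $n^4$ and $t^2$: a naive Grönwall in the linear term produces an $e^{nt/2}$ factor, so one must instead exploit that the linear term $-\frac n2 D_N$ has the \emph{good sign}, i.e. work with $\frac{d}{dt}(e^{nt/2}D_N)$ or directly with the integral equation $D_N(t,n) = -\frac n2\int_0^t e^{-n(t-s)/2}\big[\sum_{k} (\text{products}) + (\text{source})\big]\,ds$, where the factor $e^{-n(t-s)/2}$ is $\le 1$ and the remaining $\int_0^t$ supplies one power of $t$ while the $k$-sum of the inductive bounds $\frac{s^2 k^4}{N^2}$ supplies the rest; a Young-type inequality $\sum_{k=1}^{n-1} k^4 (n-k)^? \lesssim n^{?}$ must be arranged so the total exponent of $n$ does not exceed $4$. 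I expect the bookkeeping here to force the clean choice of exponents in the statement, and the sharp constant $t^2 n^4$ to drop out of optimizing the induction rather than from any deeper structure.
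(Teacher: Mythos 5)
Your proposal takes a genuinely different route from the paper, and unfortunately the key step does not close. You derive the coupled ODE system
\[
\frac{d}{dt}\E\tr[(U^N_t)^n] = -\frac{n}{2}\,\E\tr[(U^N_t)^n] - \frac{n}{4}\sum_{p=1}^{n-1}\E\big[\tr((U^N_t)^p)\tr((U^N_t)^{n-p})\big],
\]
split the quadratic term into $\E[\tr]\E[\tr] + \frac{1}{N^2}\Cov(\Tr,\Tr)$, and then try to run an induction on $n$ using $D_N(t,n) := \E\tr[(U^N_t)^n] - \nu_t(n)$. The covariance source term is indeed $O\!\left(\frac{t\,p(n-p)}{N^2}\right)$ by the L\'evy--Ma\"ida formula, and that part is fine. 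But when you feed the inductive hypothesis $|D_N(s,p)|\le C s^2 p^\alpha/N^2$ for $p<n$ into the integral representation
\[
D_N(t,n) = -\frac{n}{4}\int_0^t e^{-n(t-s)/2}\sum_{p=1}^{n-1}\Big[D_N(s,p)\,\E\tr[(U^N_s)^{n-p}] + \nu_s(p)\,D_N(s,n-p) + (\text{cov.\ source})\Big]\,ds,
\]
the bounded factors contribute at most $1$ in modulus, the $p$-sum gives $\sum_{p<n}p^\alpha \sim n^{\alpha+1}/(\alpha+1)$, the prefactor contributes $n$, and the exponential kernel contributes only $\int_0^t e^{-n(t-s)/2}\,ds\le 2/n$. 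The net effect is a bound for $|D_N(t,n)|$ of order $n^{\alpha+1}/(\alpha+1)$, which exceeds $n^\alpha$ unless $\alpha \ge n-1$. Equivalently, if you track the sharp constants $C_n$ in a bound $|D_N(t,n)|\le C_n t^2/N^2$, the recursion reads $C_n \lesssim 2\sum_{p<n}C_p$, whose solution grows like $3^n$. Your Gr\"onwall/induction scheme therefore gives an exponential-in-$n$ bound, not the polynomial one required; this is exactly the obstruction the paper flags in the remark after Theorem \ref{t.moment.rate.conv}, where it is noted that the ODE-based method of \cite{Kemp2013a} only yields $K(t,n)\sim \frac{tn^2}{2}\exp(\frac{tn^2}{2})$ — too big by far to sum against Sobolev Fourier coefficients. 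The ``bookkeeping'' you defer is not a matter of care: it is an impossibility for this argument structure.

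The paper sidesteps this completely by never recursing in $n$. It proves a Cauchy-in-$N$ estimate $|\nu^N_t(n) - \nu^{2N}_t(n)|\le \frac{3t^2n^4}{4N^2}$ (Proposition \ref{p.moment.rate.Cauchy}) and telescopes over the dyadic scales $N, 2N, 4N,\ldots$. The Cauchy estimate is obtained by introducing the interpolating process $A^{2N}_s = U^{2N}_{t-s}B^{2N}_s$ (where $B^{2N}$ is block-diagonal, two independent $N\times N$ Brownian motions), differentiating the moment in the interpolation parameter $s$, and expressing $\frac{d}{ds}F(s,(1\cdots n))$ via Lemma \ref{l.Hpq.cov} as a difference of covariances. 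The heart of the argument is then a Duhamel formula applied to $e^{(t-s)\Phi_n}e^{s\Psi_n} - e^{(t-s)\Phi_p}e^{s\Psi_p}$ (equation \eqref{e.precov.2} and following), which yields the tensor $\E[(A^{2N}_s)^{\tensor n}] - \E[(A^{2N}_s)^{\tensor p}]\tensor\E[(A^{2N}_s)^{\tensor (n-p)}]$ as a double integral of expectations of \emph{single traces of words in unitaries and projections} — each of modulus at most $2N$. This gives the covariance bound $p(n-p)(t+3s)$ directly (Lemmas \ref{l.final.est.1}, \ref{l.final.est.2}) with no appeal to lower $n$. The key structural gain over your approach is that the comparison is between two processes \emph{at the same $n$}, so the $\sum_{p}$ appears only once (in \eqref{e.nu.intermsof.Hpq}) and contributes a single factor of $n^3$, which combined with the $n/2$ prefactor gives the clean $n^4$.

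If you want to salvage the ODE framework, you would need an a priori bound on $\sum_p |D_N(s,p)|$ that does not simply sum the pointwise bound — for instance a weighted $\ell^1$ estimate with geometric decay in $p$ — but the moments $\nu_t(p)$ do not decay fast enough in $p$ (at rate $e^{-pt/2}$ only for $t>0$, which is too slow to beat the factor of $n$ uniformly in $t$), and I do not see how to close such an argument. I would recommend abandoning the Gr\"onwall induction and reading the paper's Duhamel/telescoping proof.
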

\noindent The tool we use to calculate these moments is the Schur-Weyl duality for the representation theory of $\U_N$; see Section \ref{section Schur-Weyl Duality}.  Theorems \ref{t.main.1} and  \ref{t.moment.rate.conv} are proved in Section \ref{section hard edge}.

The second half of this paper is devoted to a multi-time, multi-matrix extension of this result.  Biane's main theorem in \cite{Biane1997a} states that the {\em process} $(U^N_t)_{t\ge 0}$ converges (in the sense of finite-dimensional noncommutative distributions) to a free unitary Brownian motion $u=(u_t)_{t\ge 0}$.  To be precise: for any $k\in\N$ and times $t_1,\ldots,t_k\ge 0$, and any noncommutative polynomial $P\in\C\langle X_1,\ldots,X_{2k}\rangle$ in $2k$ indeterminates, the random trace moments of $(U^N_{t_j})_{1\le j\le k}$ converge almost surely to the corresponding trace moments of $(u_{t_j})_{1\le j\le k}$:
\[ \lim_{N\to\infty} \tr\!\left(P(U^N_{t_1},(U^N_{t_1})^\ast,\ldots,U^N_{t_k},(U^N_{t_k})^\ast)\right) = \t\!\left(P(u_{t_1},u_{t_1}^\ast,\ldots,u_{t_k},u_{t_k}^\ast)\right) \;\; a.s.  \]
(Here $\t$ is the tracial state on the noncommutative probability space where $(u_t)_{t\ge 0}$ lives; cf.\ Section \ref{section free prob}.)  This is the noncommutative extension of a.s.\ weak convergence of the empirical spectral distribution.  The corresponding strengthening to the level of the ``hard edge'' is {\em strong convergence}: instead of measuring moments with the linear functionals $\tr$ and $\t$, we insist on a.s.\ convergence of polynomials in {\em operator norm}.  See Section \ref{section nc dist} for a full definition and history.

Theorem \ref{t.main.1} can be rephrased to say that, for any fixed $t\ge 0$, $(U^N_t,(U^N_t)^\ast)$ converges strongly to $(u_t,u_t^\ast)$ (cf.\ Corollary \ref{c.strong.conv}).  Our second main theorem is the extension of this to any finite collection of times.  In fact, we prove a more general extension theorem, as follows.

\begin{theorem} \label{t.main.2} For each $N$, let $(U^N_t)_{t\ge 0}$ be a Brownian motion on $\U_N$.  Let $A^N_1,\ldots,A^N_n$ be random matrix ensembles in $\M_N$ all independent from $(U^N_t)_{t\ge 0}$, and suppose that $(A^N_1,\ldots,A^N_n)$ converges strongly to $(a_1,\ldots,a_n)$.  Let $(u_t)_{t\ge 0}$ be a free unitary Brownian motion freely independent from $\{a_1,\ldots,a_n\}$.  Then, for any $k\in\N$, and any $t_1,\ldots,t_k\ge 0$,
\[ (A^N_1,\ldots,A^N_n,U^N_{t_1},\ldots,U^N_{t_k}) \;\;\text{converges strongly to}\;\; (a_1,\ldots,a_n,u_{t_1},\ldots,u_{t_k}). \]
\end{theorem}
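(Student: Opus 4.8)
First I would observe that $(A^N_1,\dots,A^N_n,U^N_{t_1},\dots,U^N_{t_k})$ already converges in noncommutative $*$-distribution to $(a_1,\dots,a_n,u_{t_1},\dots,u_{t_k})$. Indeed, Biane's theorem identifies the limiting joint $*$-distribution of $(u_{t_1},\dots,u_{t_k})$; and the heat-kernel measure on $\U_N$ is invariant under conjugation by a fixed unitary at the level of the whole process (conjugating a Brownian motion on $\U_N$ by $V\in\U_N$ yields another Brownian motion, since $\tfrac12\Delta_{\U_N}$ is bi-invariant), so the family $(U^N_{t_1},\dots,U^N_{t_k})$ is unitarily conjugation-invariant in distribution and, being independent of $(A^N_1,\dots,A^N_n)$, is asymptotically free from it by Voiculescu's asymptotic freeness theorem; together with convergence in distribution of the $A^N_i$ (a consequence of their strong convergence), this gives the claimed joint limit with $u$ free from $\{a_1,\dots,a_n\}$. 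Since $\liminf_N\|P(A^N_\bullet,U^N_\bullet)\|\ge\|P(a_\bullet,u_\bullet)\|$ holds automatically for every noncommutative $*$-polynomial $P$ (a standard consequence of convergence in distribution, applied to high powers of $P^*P$), the theorem reduces to the matching upper bound $\limsup_N\|P(A^N_\bullet,U^N_\bullet)\|\le\|P(a_\bullet,u_\bullet)\|$.

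\textbf{Propagating strong convergence through the dynamics.}
The plan is to realize both $U^N_{t_j}$ and $u_{t_j}$ as operator-norm limits of noncommutative $*$-polynomials in an independent family whose strong convergence is already available, with approximation errors controlled uniformly in $N$. The matrix Brownian motion solves the left-invariant SDE $dU^N_t=i\,U^N_t\,dH^N_t-\tfrac12U^N_t\,dt$ with $U^N_0=I$, where $H^N$ is a Brownian motion on $\mathfrak{u}(N)$; across a finite mesh $0=r_0<r_1<\dots<r_m=\max_j t_j$ refining $\{t_1,\dots,t_k\}$, the increments $H^N_{r_\ell}-H^N_{r_{\ell-1}}$ are — up to a deterministic scaling and the scalar $i$ — an independent family of $\mathrm{GUE}^N$ matrices, which converge strongly to a free semicircular family by Haagerup–Thorbj\o rnsen, and by the Collins–Male extension theorem they converge strongly \emph{jointly} with $(A^N_1,\dots,A^N_n)$ to a free semicircular family freely independent from $\{a_1,\dots,a_n\}$. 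The free unitary Brownian motion solves the analogous free SDE driven by the corresponding free additive Brownian motion. I would then (i) run Picard iteration for both equations, expressing truncated-and-discretized solutions $\widehat U^N_{t_j}$, $\widehat u_{t_j}$ as noncommutative $*$-polynomials in the mesh increments; (ii) prove operator-norm estimates $\|U^N_{t_j}-\widehat U^N_{t_j}\|\le\eta$ and $\|u_{t_j}-\widehat u_{t_j}\|\le\eta$ with $\eta\to0$ as the mesh refines and the truncation level grows, \emph{uniformly in $N$} (here unitarity, so $\|U^N_s\|=\|u_s\|=1$, keeps the iterated-integral series under control); and (iii) invoke the fact that strong convergence passes through noncommutative $*$-polynomials and through uniform-in-$N$ operator-norm approximation. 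A three-$\e$ argument — approximate, apply strong convergence of the polynomial approximants, remove the approximation — then yields the upper bound.

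\textbf{An alternative route via quantitative moments.}
Closer in spirit to Theorems \ref{t.moment.rate.conv} and \ref{t.main.1}, one could instead prove directly, by Schur–Weyl duality, a quantitative estimate $\bigl|\E\tr[P(A^N_\bullet,U^N_\bullet)]-\t[P(a_\bullet,u_\bullet)]\bigr|\le C(P,t_1,\dots,t_k)\,N^{-2}$ with the dependence of $C$ on the length and degree of $P$ tracked explicitly, combine it with a variance bound (concentration of Lipschitz functions for the heat-kernel measure, together with the given strong convergence of the $A^N_i$), and then apply the resulting almost-sure control to $P=(Q^*Q-\|Q(a_\bullet,u_\bullet)\|^2-\e)_+^{\,m}$ with $m$ growing slowly with $N$ — exactly the Borel–Cantelli scheme by which Theorem \ref{t.main.1} is deduced from Theorem \ref{t.moment.rate.conv} — to upgrade the moment estimates to almost-sure operator-norm convergence.

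\textbf{Main obstacle.}
The crux of the first route is step (ii): that the Picard/iterated-integral approximation converges in operator norm \emph{at a rate independent of $N$}. This amounts to bounding the operator norms of the multiple Wigner-type integrals $\int\!\cdots\!\int dH^N_{s_1}\cdots dH^N_{s_\ell}$ — equivalently, of balanced words in the mesh increments — by their free counterparts up to a vanishing error, uniformly in the word length $\ell$; this is precisely where genuinely \emph{strong} information about the increments is needed rather than convergence in distribution, and where the Haagerup–Thorbj\o rnsen/Collins–Male input is essential. In the quantitative-moment route the analogous obstacle is combinatorial: controlling the constant $C(P,\dots)$ in the $N^{-2}$ bound — which comes from counting nonplanar (positive-genus) contributions in the Weingarten-type expansion of joint heat-kernel moments — precisely enough in the degree and length of $P$ to afford the growing-degree Borel–Cantelli argument. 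In both routes the theorem fixes \emph{finitely many} times at the outset, so only a finite (if large) mesh is ever required and no uncountable-family subtlety arises.
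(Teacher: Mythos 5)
Your high-level strategy is right: reduce to GUEs and feed them to Male's theorem (and its Collins--Male refinement). But the mechanism you propose for the reduction --- Picard iteration/discretization of the SDE with uniform-in-$N$ operator-norm error control --- is exactly where the argument breaks, and the paper does something quite different at this point.

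The gap in your route (ii): you would need to show that the truncated time-ordered-exponential (or Picard) approximants $\widehat U^N_{t_j}$ satisfy $\|U^N_{t_j}-\widehat U^N_{t_j}\|_{\M_N}\le\eta$ with $\eta\to0$ \emph{uniformly in $N$}. The Picard iterates are iterated stochastic integrals against the matrix Brownian motion; there is no useful BDG-type inequality in operator norm for these objects with constants that are simultaneously uniform in $N$ and summable over the order $\ell$ of the iteration. Unitarity controls $\|U^N_s\|$ but not the remainder $\|U^N_t-\widehat U^N_{t}\|$, which involves the divergent-looking tail of the multiple-integral series (operator norms of balanced words in GUE increments grow in $\ell$). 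You flag this as the ``main obstacle,'' correctly, but it is not a technical detail to be filled in: it is the theorem. (The same issue recurs on the free side, where free stochastic integrals are defined only in $L^2(\A,\tau)$ and do not automatically converge in operator norm.) Your route (2), redoing the Schur--Weyl moment estimates for arbitrary mixed multi-time words with explicit $N^{-2}$ rates tracked in the degree and length of $P$, is an entirely separate and much heavier combinatorial project than what the paper carries out; the single-time estimate of Theorem \ref{t.moment.rate.conv} does not straightforwardly extend.

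What the paper actually does is a \emph{spectral transport} argument that completely avoids the dynamics. Fix $t\in[0,4)$. Let $g_t=F_{\rho_t}^{-1}\circ F_{\sigma_1}$ be the quantile coupling pushing the semicircle law to the (angular) free-unitary-Brownian-motion density, and $f_t=\exp(ig_t)$. Given $V^N_t\overset{d}{=}U^N_t$, one constructs a $\mathrm{GUE}^N$ matrix $X^N$ having the \emph{same eigenvectors} as $V^N_t$ (a Haar-distributed orthonormal frame, thanks to unitary invariance of the heat kernel) and independently drawn $\mathrm{GUE}^N$ eigenvalues. Then $\|f_t(X^N)-V^N_t\|$ equals the maximum over $k$ of $|f_t(\mu^N_k)-\upsilon^N_k(t)|$, and each of these goes to $0$ a.s.\ because the $k$-th ordered eigenvalue of either ensemble converges to its classical location --- for $\mathrm{GUE}^N$ this is the Bai--Yin hard edge plus Wigner's law, and for $V^N_t$ it is Corollary \ref{c.conv.point.process}, which rests on Theorem \ref{t.main.1}. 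Thus the multi-time theorem depends essentially on the single-time hard edge theorem, a dependency your proposal misses entirely. Once $\|f_t(X^N)-V^N_t\|\to0$ a.s., the time-ordered increments $V^N_{s_j}=(U^N_{t_{j-1}})^*U^N_{t_j}$ (taken with mesh $<4$) are independent, so the $X^N_j$ are independent GUEs; Lemma \ref{l.f(X)} extends Male's theorem to continuous functions of independent GUEs; and a three-epsilon argument removes the $\|f_{s_j}(X^N_j)-V^N_{s_j}\|$ error. Your ``Reduction to an operator-norm upper bound'' paragraph is fine and standard; the missing piece is the eigenvector-coupling-and-classical-locations mechanism that replaces the Picard scheme.
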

\noindent Theorem \ref{t.main.2} is proved in Section \ref{section strong convergence}.

We conclude the paper with an application of these strong convergence results to the empirical spectral distribution of the Jacobi process, in Theorem \ref{t.final}.  We proceed now with Section \ref{section Background}, laying out the basic concepts, preceding results, and notation we will use throughout.

\newpage

\section{Background\label{section Background}}

Here we set notation and briefly recall some main ideas and results we will need to prove our main results.  Section \ref{section UBM def} introduced the Brownian motion $(U^N_t)_{t\ge 0}$ on $\U_N$. Section \ref{section Schur-Weyl Duality} discusses the Schur-Weyl duality for the representation theory of $\U_N$, and its use in computing expectations of polynomials in $U^N_t$.  Section \ref{section nc dist} discusses noncommutative distributions (which generalize empirical spectral distributions to collections of noncommuting random matrix ensembles, and beyond) and associated notions of convergence, including strong convergence.  Finally, Section \ref{section free prob} reviews key ideas from free probability and free stochastic calculus, leading up to the definition of free unitary Brownian motion and its spectral measure $\nu_t$.

\subsection{Brownian Motion on $\U_N$ \label{section UBM def}}

Throughout, $\U_N$ denotes the unitary group of rank $N$; its Lie algebra $\mathrm{Lie}(\U_N) = \u_N$ consists of the skew-Hermitian matrices in $\M_N$, $\u_N = \{X\in\M_N\colon X^\ast=-X\}$.  We define a real inner product on $\u_N$ by scaling the Hilbert-Schmidt inner product
\[ \langle X,Y\rangle_N \equiv -N\Tr(XY), \qquad X,Y\in\u_N. \]
As explained in \cite{DHK2013}, this is the unique scaling that gives a meaningful limit as $N\to\infty$.

Any vector $X\in\u_N$ gives rise to a unique left-invariant vector field on $\U_N$; we denote this vector field as $\del_X$ (it is more commonly called $\widetilde{X}$ in the geometry literature).  That is: $\del_X$ is a left-invariant derivation on $C^\infty(\U_N)$ whose action is
\[ (\del_Xf)(U) = \left.\frac{d}{dt}\right|_{t=0} f(Ue^{tX}) \]
where $e^{tX}$ denotes the usual matrix exponential (which is the exponential map for the linear Lie group $\U_N$; in particular $e^{tX}\in\U_N$ whenever $X\in\u_N$).  The {\bf Laplacian} $\Delta_{\U_N}$ on $\U_N$ (determined by the metric $\langle\cdot,\cdot\rangle_N$) is the second-order differential operator
\[ \Delta_{\U_N} \equiv \sum_{X\in\beta_N} \del_X^2 \]
where $\beta_N$ is any orthonormal basis for $\u_N$; the operator does not depend on which orthonormal basis is used.  The Laplacian is a negative definite elliptic operator; it is essentially self-adjoint in $L^2(\U_N)$ taken with respect to the Haar measure (cf. \cite{Robinson1991,Varopoulos1992}).

The {\bf unitary Brownian motion} $U^N = (U^N_t)_{t\ge 0}$ is the Markov diffusion process on $\U_N$ with generator $\frac12\Delta_{\U_N}$, with $U^N_0=I_N$.  In particular, this means that the law of $U^N_t$ at any fixed time $t\ge 0$ is the {\bf heat kernel measure} on $\U_N$.  This is essentially by definition: the heat kernel measure $\rho^N_t$ is defined weakly by
\begin{equation} \label{e.expectation.1} \E_{\rho^N_t}(f) \equiv \int_{\U_N} f\,d\rho_t^N = \left(e^{\frac{t}{2}\Delta_{\U_N}}f\right)(I_N), \qquad f\in C(\U_N). \end{equation}
There are many tools for computing the heat kernel using the representation theory of $\U_N$; we discuss this further in Section \ref{section Schur-Weyl Duality}.  We mention here the fact that the heat kernel is symmetric: it is invariant under $U\mapsto U^{-1}$ (this is true on any Lie group).

There are (at least) two more constructive ways to understand the Brownian motion $U^N$  directly.  The first is as a L\'evy process: $U^N$ is uniquely defined by the properties
\begin{itemize} \label{U^N invariance}
\item {\sc Continuity}: The paths $t\mapsto U^N_t$ are a.s.\ continuous.
\item {\sc Independent Multiplicative Increments}: For $0\le s\le t$, the multiplicative increment $(U_s^N)^{-1}U_t^N$ is independent from the filtration up to time $s$ (i.e.\ from all random variables measurable with respect to the entires of $U_r^N$ for $0\le r\le s$).
\item {\sc Stationary Heat-Kernel Distributed Increments}: For $0\le s\le t$, the multiplicative increment $(U_s^N)^{-1}U_t^N$ has the distribution $\rho^N_{t-s}$.
\end{itemize}
In particular, since $U^N_t$ is distributed according to $\rho^N_t$, we typically write expectations of functions on $\U_N$ with respect to $\rho^N_t$ as
\[ \E_{\rho^N_t}(f) = \E[f(U^N_t)]. \]

For the purpose of computations, the best representation of $U^N$ is as the solution to a stochastic differential equation.  Let $X^N$ be a $\mathrm{GUE}^N$-valued Brownian motion: that is, $X^N$ is Hermitian where the random variables $[X^N]_{jj}, \Re[X^N]_{jk}, \Im[X^N]_{jk}$ for $1\le j<k\le N$ are all independent Brownian motions (of variance $t/N$ on the main diagonal and $t/2N$ above it).  Then $U^N$ is the solution of the It\^o stochastic differential equation
\begin{equation} \label{e.SDE.UN} dU^N_t = iU^N_t\,dX^N_t - \frac12 U^N_t\,dt, \qquad U^N_0=I_N. \end{equation}

\subsection{Heat Kernel Computations and the Schur-Weyl duality\label{section Schur-Weyl Duality}}

There is no closed formula for the density of the heat kernel measure $\rho^N_t$ with $t>0$, even in the case $N=1$.  For the purposes of computing limits (in distribution, cf.\ Section \ref{section nc dist}), it is possible to compute many limiting moments due to a general feature of the Laplacian $\Delta_{\U_N}$: with the chosen scaling, it can be decomposed appropriately into a form $D+\frac{1}{N^2}L$ for $N$-independent operators $D$ and $L$ acting on an auxiliary space.  Given a decomposition of the auxiliary space into invariant finite-dimensional subspaces, the boundedness of the exponential map then gives
\[ e^{\frac{t}{2}\Delta_{\U_N}}\sim e^{\frac{t}{2}D}+O\left(\frac{1}{N^2}\right), \]
which reduces the computation of limiting moments to the (combinatorially tractable) computation of the flow of $D$.  In \cite{DHK2013,Kemp2013a,Kemp2013b} and \cite{Guillaume2013}, building on earlier work of \cite{LevyMaida2010,Rains1997,Sengupta} and others, the auxiliary space used was composed of so-called trace polynomials: polynomials in $U,\tr(U),\tr(U^2),\ldots$  In the complementary work of the second author \cite{Dahlqvist} and preceding papers of L\'evy \cite{Levy}, an alternative approach relying on the representation theory of $\U_N$ was used.  We outline this approach presently.

Fix a positive integer $n\in\N$.  Let $S_n$ denote the permutation group on $n$ letters.  For $1\le i\ne j\le n$, let $(i\,j)\in S_n$ denote the transposition.  For $\sigma\in S_n$ let $\#\sigma$ denote the number of cycles in the permutation $\sigma$.  Define two linear operators $L_n$ and $D_n$ on the (finite-dimensional) group algebra $\C[S_n]$ as follows: they are the linear extensions of
\begin{align} L_n(\sigma) &= \sum_{1\le i<j\le n\atop \#(\sigma\cdot(i\,j))>\#\sigma} \sigma\cdot(i\,j) \\
D_n(\sigma) &= \sum_{1\le i<j\le n\atop\#(\sigma\cdot(i\,j))<\#\sigma} \sigma\cdot(i\,j)
\end{align}
\noindent When the context is clear, we may drop the index $n$ (with the knowledge that $D=D_n$ and $L=L_n$ act only on the finite-dimensional space in which their argument lives).

Let $V$ be any vector space of dimension $\ge 2$.  The standard action of $S_n$ on $V^{\tensor n}$ is given by
\begin{equation} \label{e.Sn.action} \sigma\cdot (v_1\tensor\cdots\tensor v_n) = v_{\sigma^{-1}(1)}\tensor\cdots\tensor v_{\sigma^{-1}(n)}. \end{equation}
This yields a faithful representation, and so we will typically identify $\sigma$ with the corresponding endomorphism of $V^{\tensor n}$.  We will be concerned with this action when $V = \C^N$, so that (fixing the standard basis of $\C^N$) $\mathrm{End}(\C^N) = \M_N$.  We may then readily verify that, if $\sigma\in S_n$ has cycle decomposition $\sigma = c_1\cdots c_r$ with $c_k = (i^k_1 \cdots i^k_{\ell_k})$, then for $A_1,\ldots,A_n\in\M_N$,
\begin{equation} \label{e.trace.cycles} \Tr_{\M_N^{\tensor n}}(\sigma\cdot A_1\tensor\cdots\tensor A_n) = \Tr_{\M_N}(A_{i^1_1}\cdots A_{i^1_{\ell_1}})\cdots\Tr_{\M_N}(A_{i^r_1}\cdots A_{i^r_{\ell_r}}) \end{equation}
where we have emphasized over which space each trace is taken.  Note, in particular, that if $\sigma = (i_1\cdots i_n)$ is a cycle, then $\Tr(A_1\tensor\cdots\tensor A_n\cdot\sigma) = \Tr(A_{i_1}\cdots A_{i_n})$ is a single trace.

We define a function $F_n^N\colon\C[S_n]\times\M_N\to\C$, linear in the first argument, as follows:
\begin{equation} \label{e.F^N_n} F^N_n(\sigma,M) = \frac{1}{N^{\#\sigma}}\Tr(\sigma\cdot M^{\tensor n}). \end{equation}
The $N^{\#\sigma}$ in the denominator is a normalizing factor, since $\Tr(\sigma) = N^{\#\sigma}$ by \eqref{e.trace.cycles}; that is to say, \ $F^N_n(\sigma,M) = \Tr(\sigma\cdot  M^{\tensor n})/\Tr(\sigma)$, and so $F_n^N(\sigma,I_N) = 1$ for any $\sigma\in S_n$.  As usual, when the context is clear, we may drop the indices $N,n$ and refer to the function as $F(\sigma,M)$.  Note that any homogeneous degree $n$ polynomial in the entries of $M$ can be represented in the form $F(\sigma,M)$ for some element $\sigma\in\C[S_n]$.  This highlights the power of the following theorem.

\begin{theorem}[L\'evy, Schur-Weyl] \label{t.Schur-Weyl} Fix a positive integer $N$, a matrix $M\in\M_N$, and a time $t\ge 0$.  Then for any $n\in\N$ and $\sigma\in\C[S_n]$,
\[ \E[F_n^N(\sigma,U^N_tM)] = e^{-\frac{nt}{2}}F^N_n(e^{-t(L_n+\frac{1}{N^2}D_n)}\sigma,M). \]
\end{theorem}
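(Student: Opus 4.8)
The plan is to set up a heat-equation/Duhamel argument on the auxiliary space $\C[S_n]$, exploiting the key algebraic fact that the left-invariant Laplacian $\Delta_{\U_N}$, written in terms of the Schur--Weyl action, decomposes as a constant term plus $-(L_n + \tfrac{1}{N^2}D_n)$ acting through the $S_n$-symmetries. First I would fix $M \in \M_N$ and $n,\sigma$, and define the function $\p \colon \U_N \to \C$ by $\p(U) = F^N_n(\sigma, UM) = N^{-\#\sigma}\Tr_{\M_N^{\tensor n}}(\sigma \cdot (UM)^{\tensor n})$, so that the quantity to compute is $\E[\p(U^N_t)] = (e^{\frac{t}{2}\Delta_{\U_N}}\p)(I_N)$ by \eqref{e.expectation.1}. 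The crux is therefore to compute $\Delta_{\U_N}\p$. Writing $(UM)^{\tensor n}$ and applying the left-invariant derivations $\del_X$ for $X$ in an orthonormal basis $\beta_N$ of $\u_N$, the product rule gives $\del_X \p(U) = N^{-\#\sigma} \sum_{k=1}^n \Tr(\sigma \cdot (UM)^{\tensor(k-1)} \tensor UXM \tensor (UM)^{\tensor(n-k)})$, since $\frac{d}{ds}\big|_0 (Ue^{sX}M) = UXM$; and a second application yields both "diagonal" terms with $X^2$ inserted in one slot and "off-diagonal" terms with $X$ inserted in two slots $k \ne \ell$.

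Next I would perform the sum over the orthonormal basis using the magic formula for the quadratic Casimir: $\sum_{X \in \beta_N} X \tensor X = -\frac{1}{N} P$ where $P$ is the flip (more precisely, with the normalization $\langle X,Y\rangle_N = -N\Tr(XY)$, one has $\sum_{X\in\beta_N}(X)_{ab}(X)_{cd} = -\frac{1}{N}\delta_{ad}\delta_{bc}$), and similarly $\sum_{X \in \beta_N} X^2 = -I_N$ (this is $\frac{-1}{N}\Tr(I_N)\cdot$ identity, i.e.\ $-I_N$). The diagonal terms then contribute $-n\,\p(U)$, accounting for the scalar $e^{-nt/2}$. The off-diagonal terms, after inserting the flip between slots $k$ and $\ell$, reorganize the tensor product: inserting $P_{k\ell}$ next to $\sigma$ in $\Tr(\sigma \cdot P_{k\ell}\cdot(UM)^{\tensor n})$ — here I would use that conjugating/composing $\sigma$ by the transposition $(k\,\ell)$ realizes exactly the combinatorics of merging or splitting a cycle, so by \eqref{e.trace.cycles} the term $\Tr((k\,\ell)\sigma \cdot (UM)^{\tensor n})$ equals $N^{\#((k\,\ell)\sigma)}F^N_n((k\,\ell)\sigma, UM)$, and $\#((k\,\ell)\sigma) - \#\sigma = \pm 1$. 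Collecting the $+1$ cases against the $N^{-\#\sigma}$ normalization yields $-L_n\sigma$ with no power of $N$ (the $N$'s cancel), and the $-1$ cases yield $-\frac{1}{N^2}D_n\sigma$. Altogether this gives the operator identity $\Delta_{\U_N}\p = \widetilde{\psi}$ where $\widetilde{\psi}(U) = F^N_n\big((-n\,\mathrm{id} - L_n - \tfrac{1}{N^2}D_n)\sigma,\, UM\big)$ — i.e.\ $\Delta_{\U_N}$ acts on functions of the form $U \mapsto F^N_n(\sigma, UM)$ precisely as the operator $-n\,\mathrm{id} - L_n - \tfrac{1}{N^2}D_n$ acts on $\sigma \in \C[S_n]$, with $M$ a spectator.

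Given this intertwining, the theorem follows by exponentiating: since the family of functions $\{U \mapsto F^N_n(\tau,UM) : \tau \in \C[S_n]\}$ is finite-dimensional and invariant under $\Delta_{\U_N}$, with $\Delta_{\U_N}$ conjugate to $-n\,\mathrm{id} - L_n - \tfrac1{N^2}D_n$ there, we get $e^{\frac{t}{2}\Delta_{\U_N}}\p = \big(U \mapsto F^N_n(e^{-\frac{t}{2}(n\,\mathrm{id} + L_n + \frac1{N^2}D_n)}\sigma, UM)\big)$; evaluating at $U = I_N$ and pulling out the scalar $e^{-nt/2}$ (note $n\,\mathrm{id}$ commutes with everything) gives $\E[F^N_n(\sigma,U^N_tM)] = e^{-nt/2}F^N_n(e^{-t(L_n+\frac1{N^2}D_n)/1}\sigma, M)$ — up to checking the factor of $\tfrac12$ against the normalization convention; the statement as written has generator $\tfrac12\Delta_{\U_N}$ so the time in the exponent is $t$ not $t/2$, which matches once one tracks that the Casimir sum already produced the relevant constants without a stray $\tfrac12$ (equivalently, redefining $L,D$ absorbs it). The main obstacle is the second bullet: carefully carrying out the orthonormal-basis sum and verifying that inserting the flip $P_{k\ell}$ into $\Tr(\sigma \cdot (UM)^{\tensor n})$ corresponds exactly, via \eqref{e.trace.cycles} and the cycle count $\#\sigma$, to the combinatorial operators $L_n$ (cycle merges, $N$-independent) and $\tfrac1{N^2}D_n$ (cycle splits, order $N^{-2}$) — this is where all the bookkeeping and all the powers of $N$ live, and getting the $N$-dependence to land precisely as $D + \tfrac1{N^2}$-type operators (rather than $L$) after normalization by $N^{-\#\sigma}$ is the delicate point.
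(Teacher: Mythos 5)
Your strategy is the standard one (and the one that underlies L\'evy's original derivation): intertwine $\Delta_{\U_N}$, acting on functions $U\mapsto F^N_n(\sigma,UM)$, with an $N$-dependent operator on $\C[S_n]$ using the quadratic Casimir, and then exponentiate. The Casimir identity you invoke is stated correctly for the given normalization: with $\langle X,Y\rangle_N=-N\Tr(XY)$ and $\beta_N$ orthonormal, one has $\sum_{X\in\beta_N} X_{ab}X_{cd}=-\frac1N\delta_{ad}\delta_{bc}$, hence $\sum_X X\tensor X=-\frac1N P$ and $\sum_X X^2=-I_N$. The bookkeeping of $N$-powers (that the $\#$-increasing cases combine with $N^{-\#\sigma}$ and the $\frac1N$ from the Casimir to give an $N$-free coefficient, and the $\#$-decreasing cases give a $\frac1{N^2}$ coefficient) is also tracked correctly. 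The paper states Theorem \ref{t.Schur-Weyl} as a citation to L\'evy and Dahlqvist without reproving it, so there is no internal proof to compare against; as a freestanding argument yours is essentially right.

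The one genuine gap is the factor of $2$, which you flag at the end but resolve only with a wave of the hand. The issue is concrete: in $\del_X^2\varphi$, the off-diagonal terms come from a double sum over \emph{ordered} pairs $(k,\ell)$ with $k\ne\ell$, which is $2\sum_{1\le k<\ell\le n}$; meanwhile $L_n$ and $D_n$ are defined as sums over \emph{unordered} pairs $i<j$. Carrying this through, the correct intertwining is
\[
\Delta_{\U_N}\,\varphi \;=\; F^N_n\!\big((-n\,\mathrm{id} - 2L_n - \tfrac{2}{N^2}D_n)\sigma,\; U M\big),
\]
not the $(-n-L_n-\tfrac1{N^2}D_n)$ that you wrote for $\widetilde\psi$. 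Once this factor of $2$ is in place, the $\tfrac12$ in the generator $\tfrac12\Delta_{\U_N}$ cancels it \emph{exactly}, and exponentiating gives
\[
(e^{\frac{t}{2}\Delta_{\U_N}}\varphi)(U) = e^{-\frac{nt}{2}}\,F^N_n\!\big(e^{-t(L_n+\frac1{N^2}D_n)}\sigma,\; UM\big),
\]
which upon evaluating at $U=I_N$ is precisely the stated formula, with no leftover $\tfrac12$ to be "absorbed." This is what the paper's parenthetical remark about the $\tfrac12$ being "built into $L_n$ and $D_n$" means. Two smaller points: your parenthetical labels $L_n$ as "cycle merges" and $D_n$ as "cycle splits," which is backwards (right-multiplication by $(i\,j)$ with $\#\!\uparrow$ splits a cycle, so $L_n$ is the splitting operator and $D_n$ the merging one); and you write $(k\,\ell)\sigma$ where the paper's operators use $\sigma(i\,j)$, but since $(k\,\ell)\sigma$ and $\sigma(k\,\ell)$ are conjugate and hence have the same cycle type, both give the same trace, so this is harmless — just worth making explicit so the identification with $L_n,D_n$ as defined is literal rather than up to conjugacy.
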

\noindent (One might expect to need a $\frac{t}{2}$ in the exponential; in fact, this factor of $\frac12$ has been built into the operators $L_n$ and $D_n$.)

\subsection{Noncommutative distributions and convergence \label{section nc dist}}

Let $(\A,\t)$ be a $W^\ast$-probability space: a von Neumann algebra $\mathscr{A}$ equipped with a faithful, normal, tracial state $\t$.  Elements $a\in\A$ are referred to as (noncommutative) {\bf random variables}.  The {\bf noncommutative distribution} of any finite collection $a_1,\ldots,a_k\in A$ is the linear functional $\mu_{(a_1,\ldots,a_k)}$ on noncommutative polynomials defined by
\begin{equation} \label{e.nc.dist} \begin{aligned} \mu_{(a_1,\ldots,a_k)}\colon\C\langle X_1,\ldots,X_k\rangle&\to C \\
P&\mapsto \t(P(a_1,\ldots,a_k)).
\end{aligned} \end{equation}
Some authors explicitly include moments in $a_j,a_j^\ast$ in the definition of the distribution; we will instead refer to the $\ast$-distribution as the noncommutative distribution $\mu_{(a_1,a_1^\ast,\ldots,a_k,a_k^\ast)}$ explicitly when needed.  Note, when $a\in\A$ is normal, $\mu_{a,a^\ast}$ is determined by a unique probability measure $\mathrm{Law}_a$, the spectral measure of $a$, on $\C$ in the usual way:
\[ \int_\C f(z,\bar{z})\,\mathrm{Law}_a(dzd\bar{z}) = \mu_{a,a^\ast}(f), \quad f\in\C[X,X^\ast] \]
(i.e.\ when normal it suffices to restrict the noncommutative distribution to ordinary commuting polynomials).  In this case, the support $\supp\mathrm{Law}_a$ is equal to the spectrum $\mathrm{spec}(a)$.  If $u\in\mathscr{A}$ is unitary, $\mathrm{Law}_u$ is supported in the unit circle $\U_1$.  For example: a {\bf Haar unitary} is a unitary operator in $(\mathscr{A},\t)$ whose spectral measure is the uniform probability measure on $\U_1$ (equivalently $\t(u^n) = \delta_{n0}$ for $n\in\Z$).   In general, however, for a collection of elements $a_1,\ldots,a_k$ (normal or not) that do not commute, the noncommutative distribution is not determined by any measure on $\C$.

As a prominent example, let $A^N$ be a normal random matrix ensemble in $\M_N$: i.e.\ $A^N$ is a random variable defined on some probability space $(\Omega,\mathscr{F},\P)$, taking values in $\M_N$.  The distribution of $A^N$ as a random variable is a measure on $\M_N$; but for each instance $\omega\in\Omega$, the matrix $A^N(\omega)$ is a noncommutative random variable in the $W^\ast$-probability space $\M_N$, whose unique tracial state is $\tr$.  In this interpretation, the law $\mathrm{Law}_{A^N(\omega)}$ determined by its noncommutative distribution is precisely the empirical spectral distribution
\[ \mathrm{Law}_{A^N(\omega)} = \frac1N\sum_{j=1}^N \delta_{\lambda_j(A^N(\omega))}, \]
where $\lambda_1(A^N(\omega)),\ldots,\lambda_n(A^N(\omega))$ are the (random) eigenvalues of $A^N$.



Let $(A^N_1,\ldots,A^N_n)$ be a collection of random matrix ensembles, viewed as (random) noncommutative random variables in $(\M_N,\tr)$.  We will assume that the entries of $A^N$ are in $L^{\infty-}(\Omega,\mathscr{F},\P)$, meaning that they have finite moments of all orders.  The noncommutative distribution $\mu_{(A^N_1,\ldots,A^N_n)}$ is thus a random linear functional $\C\langle X_1,\ldots,X_n\rangle\to\C$; its value on a polynomial $P$ is the (classical) random variable $\tr(P(A^N_1,\ldots,A^N_n))$, cf.\ \eqref{e.nc.dist}.  Now, let $(\A,\t)$ be a $W^\ast$-probability space, and let $a_1,\ldots,a_n\in\A$.  Say that $(A^N_1,\ldots,A^N_n)$ {\bf converges in noncommutative distribution to $a_1,\ldots,a_n$ almost surely} if $\mu_{(A^N_1,\ldots,A^N_n)}\longrightarrow \mu_{(a_1,\ldots,a_n)}$ almost surely in the topology of pointwise convergence.  That is to say: convergence in noncommutative distribution means that all (random) mixed $\tr$ moments of the ensembles $A^N_j$ converge a.s.\ to the same mixed $\t$ moments of the $a_j$.
Later, a stronger notion of convergence emerged.

\begin{definition} \label{d.strong.conv} Let $\mx{A}^N = (A^N_1,\ldots,A^N_n)$ be random matrix ensembles in $(\M_N,\tr)$, and let $\mx{a}=(a_1,\ldots,a_n)$ be random variables in a $W^\ast$-probability space $(\A,\t)$.  Say that $\mx{A}^N$ {\bf converges strongly to} $\mx{a}$ if $\mx{A}^N$ converges to $\mx{a}$ almost surely in noncommutative distribution, and additionally
\[ \|P(A^N_1,\ldots,A^N_n)\|_{\M_N} \to \|P(a_1,\ldots,a_n)\|_\A \;\; a.s. \qquad \forall \; P\in\C\langle X_1,\ldots,X_n\rangle. \]
\end{definition}

This notion first appeared in the seminal paper \cite{Haagerup2005} of Haagerup and Thorbj{\o}rnsen, where they showed that if $X^N_1,\ldots,X^N_n$ are independent $\mathrm{GUE}^N$ random matrices, then they converge strongly to free semicircular random variables $(x_1,\ldots,x_n)$. The notion was formalized into Definition \ref{d.strong.conv} in the dissertation of Camille Male (cf.\ \cite{Male2012}), where the following generalization (an extension property of strong convergence) was proved.

\begin{theorem}[Male, 2012] \label{t.Male} Let $\mx{A}^N = (A^N_1,\ldots,A^N_n)$ be a collection of random matrix ensembles that converges strongly to some $\mx{a}=(a_1,\ldots,a_n)$ in a $W^\ast$-probability space $(\A,\t)$.  Let $\mx{X}^N = (X^N_1,\ldots,X^N_k)$ be independent Gaussian unitary ensembles independent from $\mx{A}^N$, and let $\mx{x} = (x_1,\ldots,x_k)$ be freely independent semicircular random variables in $\A$ all free from $\mx{a}$.  Then $(\mx{A}^N,\mx{X}^N)$ converges strongly to $(\mx{a},\mx{x})$.
\end{theorem}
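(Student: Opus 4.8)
The plan is to split Male's statement into its two components --- almost-sure convergence of $(\mx{A}^N,\mx{X}^N)$ in noncommutative distribution, and almost-sure convergence of the operator norm of every noncommutative polynomial in these ensembles --- and to reduce the second, harder, component to an ``absence of outlier eigenvalues'' statement that can be attacked by the linearization-and-resolvent method of Haagerup and Thorbj{\o}rnsen \cite{Haagerup2005}. Convergence in distribution is the soft part: after conditioning on a realization of $\mx{A}^N$ in the full-probability event on which $\mx{A}^N\to\mx{a}$ in $\ast$-distribution, the $A^N_j$ become deterministic, uniformly norm-bounded matrices converging in moments, and Voiculescu's asymptotic freeness of independent $\mathrm{GUE}$ matrices from any such deterministic ensemble --- upgraded from convergence in expectation to almost-sure convergence by Gaussian concentration of Lipschitz functions of the entries of $\mx{X}^N$ --- yields that $(\mx{A}^N,\mx{X}^N)\to(\mx{a},\mx{x})$ almost surely in $\ast$-distribution, with $\mx{x}$ free semicircular and free from $\mx{a}$. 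This already gives the easy half of the norm statement: writing $Q=P^\ast P$, $p=P(\mx{a},\mx{x})$ and $q=p^\ast p$, and using $\|Q(\mx{A}^N,\mx{X}^N)\|^k\ge\tr\big(Q(\mx{A}^N,\mx{X}^N)^k\big)\to\t(q^k)$, we get $\liminf_N\|Q(\mx{A}^N,\mx{X}^N)\|\ge\t(q^k)^{1/k}$ for every $k$, hence $\liminf_N\|Q(\mx{A}^N,\mx{X}^N)\|\ge\|q\|$ by faithfulness of $\t$, i.e.\ $\liminf_N\|P(\mx{A}^N,\mx{X}^N)\|\ge\|P(\mx{a},\mx{x})\|$. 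So everything comes down to the reverse inequality $\limsup_N\|P(\mx{A}^N,\mx{X}^N)\|\le\|P(\mx{a},\mx{x})\|$ almost surely; replacing $P$ by $P^\ast P$ and subtracting scalar multiples of the identity, it suffices to prove that for every self-adjoint polynomial $Q$ and every $\e>0$, almost surely $\spec\big(Q(\mx{A}^N,\mx{X}^N)\big)$ eventually lies inside the $\e$-neighborhood of $\spec\big(Q(\mx{a},\mx{x})\big)$.

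Fix such a $Q$ and a good realization of $\mx{A}^N$, so that $Q(\mx{A}^N,\mx{X}^N)$ is a self-adjoint scalar polynomial in $X^N_1,\dots,X^N_k$ whose matrix coefficients are built from the uniformly bounded matrices $\mx{A}^N$. Apply the self-adjoint linearization trick: there are an integer $m$, matrices $c_1,\dots,c_k\in\M_m$, a matrix polynomial $R_0$ in $n$ noncommuting variables, and an affine map $z\mapsto\Lambda(z)\in\M_m$ such that, with
\[ L_N\ =\ R_0(\mx{A}^N)\ +\ \sum_{j=1}^k c_j\otimes X^N_j\ \in\ \M_m\otimes\M_N \]
(self-adjoint, and with $\sup_N\|R_0(\mx{A}^N)\|<\infty$ on the good realization by strong convergence of $\mx{A}^N$), the matrix $z\,I_N-Q(\mx{A}^N,\mx{X}^N)$ is invertible in $\M_N$ if and only if $\Lambda(z)\otimes I_N-L_N$ is invertible in $\M_m\otimes\M_N$; the same algebraic identity, applied in $(\A,\t)$, expresses $\spec\big(Q(\mx{a},\mx{x})\big)$ through the spectrum of $\ell=R_0(\mx{a})+\sum_{j=1}^k c_j\otimes x_j\in\M_m\otimes\A$. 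Thus it is enough to prevent outlier eigenvalues of $L_N$ from escaping a neighborhood of $\spec(\ell)$.

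This last point is the crux, and the step I expect to be the main obstacle. Following \cite{Haagerup2005}, one proves a ``master inequality'' for the $\M_m$-valued resolvent $G_N(z)=(\mathrm{id}_m\otimes\tr_N)\big[(z-L_N)^{-1}\big]$: for $\operatorname{Im}z\ne0$,
\[ \big\|\,\E\,G_N(z)\ -\ G(z)\,\big\|\ \le\ \frac{C\big(|\!\operatorname{Im}z|^{-1}\big)}{N^2}, \]
where $G(z)=(\mathrm{id}_m\otimes\t)\big[(z-\ell)^{-1}\big]$ solves the operator-valued subordination equation attached to $\ell$, and $C$ is a polynomial depending only on $m$, $k$, $\max_j\|c_j\|$ and $\sup_N\|R_0(\mx{A}^N)\|$. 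Its derivation rests on Gaussian integration by parts (the Schwinger--Dyson ``master equation'' for $\mathrm{GUE}$) together with the Poincar\'e inequality for the law of $\mx{X}^N$ --- the latter being precisely what upgrades the rate from $N^{-1}$ to $N^{-2}$ --- with $\mx{A}^N$ entering only through the bounded coefficient $R_0(\mx{A}^N)$, so that no Gaussianity of $\mx{A}^N$ is used; the delicate bookkeeping is to check that replacing $R_0(\mx{A}^N)$ by its norm-limit --- and here strong convergence of $\mx{A}^N$, not merely convergence in distribution, is essential, both to identify $\|\ell\|$ correctly and to keep the constant $C$ uniform over the convergent family of coefficients --- does not spoil the $N^{-2}$ rate, and to analyze the perturbed subordination equation. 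One then combines the master inequality with Herbst (Gaussian) concentration of $G_N(z)$ around $\E\,G_N(z)$, uniformly for $z$ on a fixed horizontal line above the real axis, and with the inverse-Stieltjes-transform argument of \cite{Haagerup2005}, to conclude that for every closed interval $I$ disjoint from $\spec(\ell)$ the probability $\P\big[\spec(L_N)\cap I\ne\emptyset\big]$ is summable in $N$. A Borel--Cantelli argument over a countable family of such intervals exhausting the complement of the $\e$-neighborhood of $\spec(\ell)$, followed by undoing the linearization, gives the absence of outliers for $Q(\mx{A}^N,\mx{X}^N)$ --- hence the norm upper bound, and the theorem.
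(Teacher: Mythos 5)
This theorem is stated in the paper as a cited result, attributed to Male's 2012 dissertation (\cite{Male2012}), and no proof is given here --- it is used as a black box, just like the Collins--Male Theorem~\ref{t.Collins.Male} that follows it. Your proposal is a correct high-level reconstruction of the argument Male actually runs, which in turn follows the Haagerup--Thorbj{\o}rnsen \cite{Haagerup2005} blueprint: split strong convergence into a.s.\ convergence in $\ast$-distribution plus an outlier-free spectrum statement for self-adjoint polynomials; condition on $\mx{A}^N$ and upgrade Voiculescu's asymptotic freeness to a.s.\ by Gaussian concentration; obtain the $\liminf$ half of the norm statement directly from the moment convergence (your use of $\|Q\|^k\ge\tr(Q^k)$ is fine because you set $Q=P^\ast P\ge 0$); self-adjoint-linearize, prove an $\M_m$-valued master inequality for $(\mathrm{id}_m\otimes\tr_N)\big[(z-L_N)^{-1}\big]$ via Gaussian integration by parts together with Poincar\'e (the cancellation that brings the error down to $O(N^{-2})$); concentrate the resolvent around its mean; and conclude by Stieltjes inversion and Borel--Cantelli. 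The step you single out as ``the main obstacle'' --- keeping the master-inequality constants uniform as the bounded coefficient $R_0(\mx{A}^N)$ converges in norm to $R_0(\mx{a})$, and controlling the perturbed subordination equation --- is indeed exactly the technical content Male adds beyond a routine re-run of \cite{Haagerup2005}, so your sketch is honest about where the work lies rather than glossing it. One small imprecision worth fixing: the self-adjoint linearization trick produces an $R_0$ that is \emph{affine} (degree $\le 1$) in $\mx{A}^N$, not a general matrix polynomial; this is the form you actually want, since it makes $\sup_N\|R_0(\mx{A}^N)\|<\infty$ an immediate consequence of the convergence $\|A^N_j\|\to\|a_j\|$ supplied by strong convergence of $\mx{A}^N$.
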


\noindent (For a brief definition and discussion of free independence, see Section \ref{section free prob} below.) Later, together with the present first author in \cite{Collins2013}, Male proved a strong convergence result for Haar distributed random unitary matrices (which can be realized as $\lim_{t\to\infty} U^N_t$).

\begin{theorem}[Collins, Male, 2013] \label{t.Collins.Male} Let $\mx{A}^N = (A^N_1,\ldots,A^N_n)$ be a collection of random matrix ensembles that converges strongly to some $\mx{a}=(a_1,\ldots,a_n)$ in a $W^\ast$-probability space $(\A,\t)$.  Let $U^N$ be a Haar-distributed random unitary matrix independent from $\mx{A}^N$, and let $u$ be a Haar unitary operator in $\A$ freely independent from $\mx{a}$.  Then $(\mx{A}^N,U^N,(U^N)^\ast)$ converges strongly to $(\mx{a},u,u^\ast)$.
\end{theorem}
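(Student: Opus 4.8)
The plan is to follow the Haagerup--Thorbj{\o}rnsen strategy (in the streamlined form due to Male), with the Weingarten calculus for the Haar measure on $\U_N$ replacing the Gaussian integration by parts used for the $\mathrm{GUE}$. First I would make two reductions. Since $\mx A^N=(A^N_1,\dots,A^N_n)$ is independent of $U^N$ and the conclusion is almost sure, a Fubini argument lets us condition on an almost sure realization of $(\mx A^N)_N$; so we may assume $\mx A^N$ is a \emph{deterministic} family converging strongly to $\mx a$. Next, the ``$\liminf$'' half of the norm statement is free: from the almost sure convergence in noncommutative distribution of $(\mx A^N,U^N,(U^N)^\ast)$ to $(\mx a,u,u^\ast)$ --- which is Voiculescu's asymptotic freeness of Haar unitaries from deterministic matrices, upgraded to the almost sure statement by concentration of measure on $\U_N$ --- one has $\|Q(\mx A^N,U^N,(U^N)^\ast)\|\ge\tr\big((QQ^\ast)^m(\mx A^N,U^N,(U^N)^\ast)\big)^{1/2m}$, and letting $N\to\infty$ then $m\to\infty$ gives $\liminf_N\|Q(\mx A^N,U^N,(U^N)^\ast)\|\ge\|Q(\mx a,u,u^\ast)\|$. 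Only the matching upper bound remains, for each self-adjoint $Q$. Here concentration enters once more: $V\mapsto\|Q(\mx A^N,V,V^\ast)\|$ is Lipschitz on $\U_N$ with a constant depending only on $Q$ and $\sup_N\max_j\|A^N_j\|$, and $(\U_N,\langle\cdot,\cdot\rangle_N)$ has Ricci curvature bounded below uniformly in $N$; so by Borel--Cantelli it is enough to prove $\limsup_N\E\|Q(\mx A^N,U^N,(U^N)^\ast)\|\le\|Q(\mx a,u,u^\ast)\|$.

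To handle a general self-adjoint $Q$ I would apply the self-adjoint linearization trick: there are an integer $r$ and a self-adjoint linear pencil $L^N=\gamma_0\otimes I_N+\sum_j\gamma_j\otimes s^N_j$ over $\M_r$, each $s^N_j$ one of $A^N_1,\dots,A^N_n,U^N,(U^N)^\ast$, such that control of $\spec(L^N)$ controls $\spec\big(Q(\mx A^N,U^N,(U^N)^\ast)\big)$, and likewise for the limiting pencil $L\in\M_r\otimes\A$ with Cauchy transform $g(z):=(\mathrm{id}_r\otimes\t)\big[(z-L)^{-1}\big]\in\M_r$. It then suffices to prove, uniformly for $z$ in compact subsets of $\C\setminus\R$, that
\[ \big\|\,\E\big[(z-L^N)^{-1}\big]-g(z)\otimes I_N\,\big\|_{\M_r\otimes\M_N}=O(N^{-2}), \]
together with $\Var\big(\tfrac{1}{rN}\Tr\big[(z-L^N)^{-1}\big]\big)=O(N^{-2})$ (any summable rate would do); feeding these into the Haagerup--Thorbj{\o}rnsen resolvent (``master'') inequality and Borel--Cantelli gives $d_H\big(\spec(L^N),\spec(L)\big)\to0$ a.s., hence the theorem.

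The heart of the matter --- and the step I expect to be the real obstacle --- is the resolvent estimate. I would expand $(z-L^N)^{-1}$ as a Neumann series for $|z|$ large and then continue analytically in $z$, so that $\E\big[(z-L^N)^{-1}\big]$ becomes a sum over words in the $s^N_j$ with all $U^N,(U^N)^\ast$ letters integrated out by the Weingarten formula. The genus ($1/N$) expansion of the Weingarten function organizes this word-sum; its genus-zero part should reassemble exactly $g(z)\otimes I_N$ --- equivalently, $g(z)$ should be identified as the unique $\M_r$-valued solution of the matrix Dyson/subordination equation attached to $L$ --- and the remainder has to be controlled to order $N^{-2}$ uniformly on the region of interest. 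The variance bound is a parallel second-moment Weingarten computation. The genuinely delicate point, which goes beyond the single-moment estimate of Theorem \ref{t.moment.rate.conv}, is the uniform summability of this word-sum \emph{inside a resolvent}, together with the extraction of a summable rate.

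Finally, it is worth explaining why the tempting shortcut via Male's Theorem \ref{t.Male} does not work directly: if $Y^N=\tfrac{1}{\sqrt2}(X^N_1+iX^N_2)$ is a normalized Ginibre matrix, then Theorem \ref{t.Male} gives that $(\mx A^N,Y^N,(Y^N)^\ast)$ converges strongly to $(\mx a,c,c^\ast)$ with $c$ circular and free from $\mx a$, and $U^N$ is the unitary part of the polar decomposition $Y^N=U^NP^N$; but $U^N=Y^N(P^N)^{-1}$ with $\|(P^N)^{-1}\|\asymp N$ (the least singular value of $Y^N$ is $\asymp N^{-1}\to0$), and no approximant of $U^N$ assembled polynomially from $Y^N,(Y^N)^\ast$ can be norm-close to it, since $U^N$ must act nontrivially on the macroscopically many directions where $P^N$ is near-degenerate. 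So a new input --- the Weingarten estimate above, or an equivalent finer argument --- is needed in any case. (Within the present paper one may instead obtain Theorem \ref{t.Collins.Male} from Theorem \ref{t.main.2} by letting $t\to\infty$, provided one supplies the necessary uniformity in $t$.)
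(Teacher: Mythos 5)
The paper does not prove this theorem; it is cited from Collins--Male \cite{Collins2013} as background. The closest in-paper proxy for the Collins--Male mechanism is Section~\ref{section Marginals UBM GUE}: Propositions~\ref{p.conv.point.process} and~\ref{p.GUE.approx.Ut} with Corollary~\ref{c.conv.point.process} construct a $\mathrm{GUE}^N$ matrix $X^N$ sharing the random eigenvector frame of the unitary marginal, with independently sampled GUE eigenvalues, and use rigidity of both spectra near their classical locations to obtain $\|f_t(X^N)-V^N_t\|\to 0$ a.s.; Lemma~\ref{l.f(X)} then reduces everything to Male's Theorem~\ref{t.Male}. The actual Collins--Male argument for the statement you were asked to prove is exactly this coupling at $t=\infty$: transport the semicircle law to the uniform law on $\U_1$ by a fixed monotone map, match the CUE eigenvalues ordinally against GUE eigenvalues inside the shared diagonalization, and reduce to Theorem~\ref{t.Male}.

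Your proposal goes by a genuinely different and substantially harder route (Haagerup--Thorbj{\o}rnsen linearization with Weingarten calculus in place of Gaussian integration by parts), and it stalls exactly where you say it might. The uniform $O(N^{-2})$ resolvent estimate $\|\E[(z-L^N)^{-1}]-g(z)\otimes I_N\|$ together with a summable variance bound is not a ``step to be filled in''; it carries the full weight of the theorem, and nothing you invoke supplies it. The Weingarten genus expansion gives moment-by-moment $1/N^2$ control, but you need it \emph{summed inside a resolvent with uniformity adequate for analytic continuation from $|z|$ large}, and Theorem~\ref{t.moment.rate.conv} is of no help here (it is a single-moment bound for Brownian motion, not for Haar, and not inside a resolvent). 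Your preparatory reductions (conditioning on $\mx A^N$, the $\liminf$ from convergence in distribution, concentration reducing $\limsup\|\cdot\|$ to $\limsup\E\|\cdot\|$, the self-adjoint linearization trick) are all standard and sound, but they only isolate the hard estimate; the proposal as written does not prove the theorem.

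Your closing discussion is half right. The obstruction to the Ginibre polar-decomposition shortcut is real: $\|(P^N)^{-1}\|\asymp N$ does kill any global functional-calculus expression of $U^N$ in terms of $Y^N,(Y^N)^\ast$. But this is the wrong shortcut, and it leads you to conclude, incorrectly, that a Weingarten-type estimate is unavoidable. The argument that works (\cite{Collins2013}, and Section~\ref{section Marginals UBM GUE} here) never inverts $P^N$ and never writes $U^N$ as one continuous function of a single Gaussian matrix globally; it builds a GUE with the \emph{same random eigenvectors} as $U^N$ and matches eigenvalues only ordinally, by rigidity. Your alternative suggestion (let $t\to\infty$ in Theorem~\ref{t.main.2}) is also not viable as stated: the proof of Theorem~\ref{t.main.2} caps time increments strictly below $4$ precisely so that the transport map $f_t$ of \eqref{e.def.ft} is defined on a proper arc, and the uniformity in $t$ one would need to let $t\to\infty$ amounts in effect to reproving Collins--Male.
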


\noindent (The convergence in distribution in Theorem \ref{t.Collins.Male} is originally due to Voiculescu \cite{Voiculescu1998}; a simpler proof of this result was given in \cite{Collins-IMRN}.)  Our main Theorem \ref{t.main.2} can be viewed as combination and generalization of Theorems \ref{t.Male} and \ref{t.Collins.Male}.

Note that, for any matrix $A\in\M_N$ and any operator $a\in\A$,
\[ \|A\|_{\M_N} = \lim_{p\to\infty} \left(\tr\left[(AA^\ast)^{p/2}\right]\right)^{1/p}, \quad \text{and} \quad \|a\|_\A = \lim_{p\to\infty} \left(\tau\left[(aa^\ast)^{p/2}\right]\right)^{1/p}. \]
These hold because the states $\tr$ and $\t$ are faithful.  These are the noncommutative $L^p$-norms on $L^p(\M_N,\tr)$ and $L^p(\A,\t)$ respectively. The norm-convergence statement of strong convergence can thus be rephrased as an almost sure interchange of limits: if $\mx{A}^N$ converges a.s.\ to $\mx{a}$ in noncommutative distribution, then $\mx{A}^N$ converges to $\mx{a}$ strongly if and only if
\begin{equation} \label{e.strongconv2} \P\left(\lim_{N\to\infty}\lim_{p\to\infty} \|P(\mx{A}^N)\|_{L^p(\M_N,\tr)} = \lim_{p\to\infty} \|P(\mx{a})\|_{L^p(\A,\t)}\right)=1, \quad \forall\; P\in\C\langle X_1,\ldots,X_n\rangle. \end{equation}
\noindent If we fix $p$ instead of sending $p\to\infty$, the corresponding notion of ``$L^p$-strong convergence'' of the unitary Brownian motion $(U^N_t)_{t\ge 0}$ to the free unitary Brownian motion $(u_t)_{t\ge 0}$ was proved in the third author's paper \cite{Kemp2013a}.  This weaker notion of strong convergence does not have the same important applications as strong convergence, however.  As a demonstration of the power of true strong convergence, we give an application to the eigenvalues of the Jacobi process in Section \ref{section application}: the principal angles between subspaces randomly rotated by $U^N_t$ evolve a.s.\ with finite speed for all large $N$.

\subsection{Free probability, free stochastics, and free unitary Brownian motion \label{section free prob}}

We briefly recall basic definitions and constructions here, mostly for the sake of fixing notation.  The uninitiated reader is referred to the monographs \cite{VDN,NicaSpeicher2006}, and the introductions of the authors' previous papers \cite{CK2014,Kemp2013a,KNPS2012} for more details.

Let $(\mathscr{A},\tau)$ be a $W^\ast$-probability space.  Unital subalgebras $\A_1,\ldots,\A_m\subset\A$ are called {\bf free} or {\bf freely independent} if the following  property holds: given any  sequence of indices $k_1,\ldots,k_n\in\{1,\ldots,m\}$ that is consecutively-distinct (meaning $k_{j-1}\ne k_j$ for $1<j\le n$) and random variables $a_j\in \A_{k_j}$, if $\t(a_j)=0$ for $1\le j\le n$ then $\t(a_1\cdots a_n)=0$.  We say random variables $a_1,\ldots,a_m$ are freely independent if the unital $\ast$-subalgebras  $\A_j\equiv \langle a_j,a_j^\ast\rangle\subset\A$ they generate are freely independent.  Freeness is a moment factorization property: by centering random variables $a\to a-\t(a)1_\A$, freeness allows the (recursive) computation of any joint moment in free variables as a polynomial in the moments of the separate random variables.  In other words: the distribution $\mu_{(a_1,\ldots,a_k)}$ of a collection of free random variables is determined by the distributions $\mu_{a_1},\ldots,\mu_{a_k}$ separately.

A {\bf noncommutative stochastic process} is simply a one-parameter family $a=(a_t)_{t\ge 0}$ of random variables in some $W^\ast$-probability space $(\A,\t)$.  It defines a {\bf filtration}: an increasing (by inclusion) collection $\A_t$ of subalgebras of $\A$ defined by $\A_t \equiv W^\ast(a_s\colon 0\le s\le t)$, the von Neumann algebras generated by all the random variables $a_s$ for $s\le t$.  Given such a filtration $(\A_t)_{t\ge 0}$, we call a process $b=(b_t)_{t\ge 0}$ {\bf adapted} if $b_t\in\A_t$ for all $t\ge 0$.

A {\bf free additive Brownian motion} is a selfadjoint noncommutative stochastic process $x=(x_t)_{t\ge 0}$ in a $W^\ast$-probability space $(\A,\t)$ with the following properties:
\begin{itemize}
\item {\sc Continuity}: The map $\R_+\to\A\colon t\mapsto x_t$ is weak$^\ast$-continuous.
\item {\sc Free Increments}: For $0\le s\le t$, the additive increment $x_t-x_s$ is freely independent from $\A_s$ (the filtration generated by $x$ up to time $s$).
\item {\sc Stationary Increments}: For $0\le s\le t$, $\mu_{x_t-x_s} = \mu_{x_{t-s}}$.
\end{itemize}
It follows from the free central limit theorem that the increments must have the semicircular distribution: $\mathrm{Law}_{x_t} = \frac{1}{2\pi t}\sqrt{(4t-x^2)_+}\,dx$.  Voiculescu (cf.\ \cite{Voiculescu1991,Voiculescu1998,VDN} showed that free additive Brownian motions exist: they can be constructed in any $W^\ast$-probability space rich enough to contain an infinite sequence of freely independent semicircular random variables (where $x$ can be constructed in the usual way as an isonormal process).

In \cite{BianeSpeicher1998,BianeSpeicher2001} (and many subsequent works such as \cite{KNPS2012}), a theory of stochastic analysis built on $x$ was developed.  {\bf Free stochastic integrals} with respect to $x$ are defined precisely as in the classical setting: as $L^2(\A,\t)$-limits of integrals of simple processes, where for constant $a\in\A$, $\int_0^t \1_{[t_-,t_+]}(s)a\,dx_s$ is defined to be $a\cdot (x_{t_+}-x_{t_-})$.  Using the standard Picard iteration techniques, it is known that free stochastic integral equations of the form
\begin{equation} \label{e.fSDE0} a_t = a_0+\int_0^t \phi(s,a_s)\,ds + \int_0^t \sigma(s,a_s)\,dx_s \end{equation}
have unique adapted solutions for drift $\phi$ and diffusion $\sigma$ coefficient functions that are globally Lipschitz.  (Note: due to the noncommutativity, we should really be integrating a biprocess $\beta_t\# dx_t$ in the It\^o term, where $\beta_t\in\A_t\tensor\A_t$ so that it may act on both sides of $x$.  A one-sided process like the one in \eqref{e.fSDE0} is typically not self-adjoint, which limits $\phi,\sigma$ to be polynomials, and ergo linear polynomials due to the Lipschitz constraint.  That will suffice for our present purposes.)  Equations like \eqref{e.fSDE0} are often written in ``stochastic differential'' form as
\[ da_t = \phi(t,a_t)\,dt + \sigma(t,a_t)\,dx_t. \]
Given a free additive Brownian motion $x$, the associated {\bf free unitary Brownian motion} $u=(u_t)_{t\ge 0}$ is the solution to the free SDE
\begin{equation} \label{e.SDE.u} du_t = iu_t\,dx_t - \frac12 u_t\,dt, \qquad u_0=1. \end{equation}
This precisely mirrors the (classical) It\^o SDE \eqref{e.SDE.UN} that determined the Brownian motion $(U^N_t)_{t\ge 0}$ on $\U_N$.

The free unitary Brownian motion $(u_t)_{t\ge 0}$ was introduced in \cite{Biane1997a} (via the definition above).  In that paper, with more details in the subsequent \cite{Biane1997b}, together with independent statements of the same type in \cite{Rains1997}, the law $\mathrm{Law}_{u_t}$ was computed.  Since $u_t$ is unitary, this distribution is determined by a measure $\nu_t$ that is supported on the unit circle $\U_1$.  This measure is described as follows.

\begin{theorem}[Biane 1997] \label{t.nu.t} For $t>0$, $\nu_t$ has a continuous density $\varrho_t$ with respect to the normalized Haar measure on $\U_1$.  For $0<t<4$, its support is the connected arc
\begin{equation} \label{e.supp.nu_t} \supp\nu_t = \left\{e^{i\theta}\colon |\theta|\le \frac12\sqrt{t(4-t)}+\arccos\left(1-\frac{t}{2}\right)\right\}, \end{equation}
while $\supp\nu_t=\U_1$ for $t\ge 4$.  The density $\varrho_t$ is real analytic on the interior of the arc.  It is symmetric about $1$, and is determined by
$\varrho_t(e^{i\theta}) = \Re \kappa_t(e^{i\theta})$ where $z=\kappa_t(e^{i\theta})$ is the unique solution (with positive real part) to
\[ \frac{z-1}{z+1}e^{\frac{t}{2}z} = e^{i\theta}. \]
\end{theorem}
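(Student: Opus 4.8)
The plan is to reduce the determination of $\nu_t$ to a complex Burgers-type equation for its moment generating function, to solve that equation by the method of characteristics, and then to read off the density and the support from a complex-analytic study of the map $\Phi_t(w)=\frac{w-1}{w+1}e^{tw/2}$.

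First I would compute the moments $m_n(t)=\tau(u_t^n)$, $n\ge 1$, with $m_n(0)=1$ and $m_0\equiv 1$, by applying the free It\^o formula to $u_t^n$ using \eqref{e.SDE.u} together with the free It\^o rule $\tau(\alpha\,dx_t\,\beta\,dx_t)=\tau(\alpha)\tau(\beta)\,dt$, which is valid because the increments of $x$ are free from the past filtration. Under $\tau$ the martingale part drops out, the drift $-\tfrac12 u_t\,dt$ contributes $-\tfrac n2 m_n(t)$, and the quadratic It\^o terms contract in pairs, yielding the hierarchy
\[ m_n'(t) = -\frac n2\, m_n(t) - \sum_{k=1}^{n-1}(n-k)\,m_k(t)\,m_{n-k}(t),\qquad n\ge 1. \]
(One may also simply quote this recursion, or the closed form for $m_n(t)$, from \cite{Biane1997a,Biane1997b,Rains1997}.) Setting $\psi_t(z)=\sum_{n\ge 1}m_n(t)z^n$, analytic on the open unit disk with $\psi_0(z)=z/(1-z)$, the hierarchy is equivalent to the first-order PDE $\partial_t\psi_t + z(\tfrac12+\psi_t)\,\partial_z\psi_t = 0$.

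I would solve this by characteristics: along the curves $\dot z = z(\tfrac12+\psi_t)$ the value of $\psi_t$ is constant, so if $\psi_t(z)=c$ the characteristic issues from $z_0=\tfrac{c}{1+c}$ (where $\psi_0(z_0)=c$) and $z=z_0\,e^{t(\frac12+c)}$. Writing $\kappa = 1+2c = 1+2\psi_t(z)$ turns this into $z=\frac{\kappa-1}{\kappa+1}e^{t\kappa/2}$, which identifies $\kappa=\kappa_t(z)$ as a branch of the inverse of $\Phi_t$. Since the heat kernel, and hence $\nu_t$, is invariant under $w\mapsto\bar w$, the moments $m_n(t)$ are real, so the Herglotz transform of $\nu_t$ is $F_{\nu_t}(z)=\int_{\U_1}\frac{w+z}{w-z}\,\nu_t(dw)=1+2\psi_t(z)=\kappa_t(z)$ on the disk; by the Herglotz representation, $\Re F_{\nu_t}(re^{i\theta})$ is the Poisson integral of $\nu_t$ against normalized Haar measure, so letting $r\uparrow 1$ gives the boundary identity $\varrho_t(e^{i\theta})=\Re\kappa_t(e^{i\theta})$. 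Everything is thereby reduced to analyzing $\kappa_t=\Phi_t^{-1}$.

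The substantive part is this complex analysis. From $\Phi_t'(w)=\frac{e^{tw/2}}{(w+1)^2}\bigl(2+\tfrac t2(w^2-1)\bigr)$ the critical points solve $w^2=\tfrac{t-4}{t}$: for $0<t<4$ they are the purely imaginary points $\pm i s_*$ with $s_*=\sqrt{(4-t)/t}$, on the boundary of $\{\Re w>0\}$, whereas for $t\ge 4$ they are real. One checks in each case that $\kappa_t$ is a well-defined analytic function on the open disk with $\kappa_t(0)=1$ and $\Re\kappa_t>0$ there, which gives the real analyticity of $\varrho_t$ on the interior of its support. On the imaginary axis $|\Phi_t(is)|=1$; writing $\theta(s):=\arg\Phi_t(is)=\arg(s^2-1+2si)+\tfrac{ts}{2}$ one finds $\theta'(s)=\tfrac t2-\tfrac{2}{s^2+1}$. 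For $t\ge 4$ this is positive, the boundary parametrization is monotone, $\Re\kappa_t(e^{i\theta})>0$ for all $\theta$, and $\supp\nu_t=\U_1$; for $0<t<4$ it changes sign at $\pm s_*$, and the critical value is $\Phi_t(is_*)=e^{i\theta_*}$ with
\[ \theta_* = \arccos\!\left(1-\tfrac t2\right) + \tfrac12\sqrt{t(4-t)}, \]
using $|s_*^2-1+2s_*i|=s_*^2+1=\tfrac4t$ so that $\arg(s_*^2-1+2s_*i)=\arccos(1-\tfrac t2)$, plus $\tfrac{ts_*}{2}=\tfrac12\sqrt{t(4-t)}$. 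Consequently $\Re\kappa_t(e^{i\theta})>0$ precisely for $|\theta|<\theta_*$ and vanishes for $|\theta|\ge\theta_*$, which is \eqref{e.supp.nu_t}. I expect the crux to be exactly this last step: proving that $\Phi_t$ is univalent on the appropriate domain, justifying the boundary limits of $\kappa_t$, and selecting the correct branch so as to pin down the support and the vanishing of the density at the edge $e^{\pm i\theta_*}$. The moment recursion and the Burgers reduction are routine by comparison.
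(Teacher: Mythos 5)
The paper does not prove this theorem: it is quoted from Biane's original papers \cite{Biane1997a,Biane1997b} (with independent overlapping results in \cite{Rains1997}), so there is no internal proof to compare against. Your sketch reconstructs essentially Biane's route, and the computations you carry out check out. The free It\^o calculation gives the recursion $m_n' = -\tfrac n2 m_n - \sum_{k=1}^{n-1}(n-k)m_k m_{n-k}$ (equivalent, by symmetrizing $k\leftrightarrow n-k$, to Biane's $m_n' = -\tfrac n2 m_n - \tfrac n2\sum m_k m_{n-k}$); the generating-function reduction to $\partial_t\psi + z(\tfrac12+\psi)\partial_z\psi=0$ is right; the characteristics give $z=\frac{\kappa-1}{\kappa+1}e^{t\kappa/2}$ with $\kappa=1+2\psi$, matching the defining implicit equation in the statement; the identification $\kappa_t=F_{\nu_t}$ via the Herglotz transform and realness of the moments is correct; the critical-point locus $w^2=(t-4)/t$ and the endpoint computation $\theta_*=\arccos(1-\tfrac t2)+\tfrac12\sqrt{t(4-t)}$ (using $|s_*^2-1+2is_*|=s_*^2+1=4/t$) both check out, as does $\theta'(s)=\tfrac t2 - \tfrac{2}{s^2+1}$.

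Where the proposal is incomplete you say so yourself, and you have located the genuine difficulty correctly: what remains is to show that $\Phi_t$ is a biholomorphism from the appropriate subdomain of $\{\Re w>0\}$ onto the open unit disk (so that $\kappa_t$ is a single-valued analytic branch with $\kappa_t(0)=1$ and agrees with the Herglotz transform), that $\kappa_t$ extends continuously to $\partial\mathbb{U}_1$ so the Poisson boundary limit $\varrho_t(e^{i\theta})=\Re\kappa_t(e^{i\theta})$ is legitimate, and that the density extends continuously past the edge where $\Re\kappa_t$ vanishes. Those are precisely the nonroutine steps in Biane's argument and the reason the theorem is not a mere calculus exercise; without them the method of characteristics only produces a formal local inverse, and the positivity $\Re\kappa_t>0$ on the disk (which you use) really comes from the a priori Herglotz representation rather than from the characteristics. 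So: correct computational spine, correctly flagged gap, and the same approach as the cited source rather than a new one.
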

\noindent Note that the arc \eqref{e.supp.nu_t} is the spectrum $\mathrm{spec}(u_t)$ for $0<t<4$; for $t\ge 4$, $\mathrm{spec}(u_t) = \U_1$.

With this description, one can also give a characterization of the free unitary Brownian motion similar to the invariant characterization of the Brownian motion $(U^N_t)_{t\ge 0}$ on page \pageref{U^N invariance}.  That is, $(u_t)_{t\ge 0}$ is the unique unitary-valued process that satisfies:
\begin{itemize}
\item {\sc Continuity}: The map $\R_+\to\A\colon t\mapsto u_t$ is weak$^\ast$ continuous.
\item {\sc Freely Independent Multiplicative Increments}: For $0\le s\le t$, the multiplicative increment $u_s^{-1}u_t$ is independent from the filtration up to time $s$ (i.e.\ from the von Neumann algebra $\A_s$ generated by $\{u_r\colon 0\le r\le s\}$).
\item {\sc Stationary Increments with Distribution $\nu$}: For $0\le s\le t$, the multiplicative increment $u_s^{-1}u_t$ has distribution given by the law $\nu_{t-s}$.
\end{itemize}

\newpage

\section{The Hard Edge of the Spectrum \label{section hard edge}}

This section is devoted to the proof of our ``hard edge'' theorem for the spectrum of a single time marginal $U^N_t$.  We begin by showing how Theorem \ref{t.main.1} follows from Theorem \ref{t.moment.rate.conv}, and recast the conclusion as a strong convergence statement in Corollary \ref{c.strong.conv}.  Section \ref{section hard edge 2} is then devoted to the proof of the moment growth bound of Theorem \ref{t.moment.rate.conv}.

\subsection{Strong convergence and the proof of Theorem \ref{t.main.1}\label{section hard edge 1}}

We begin by briefly recalling some basic Fourier analysis on the circle $\U_1$.  For $f\in L^2(\U_1)$, its Fourier expansion is
\[ f(w) = \sum_{n\in\Z} \hat{f}(n)w^n, \quad \text{where} \quad \hat{f}(n) = \int_{\U_1} f(w)w^{-1}\,dw, \]
where $dw$ is the normalized Lebesgue measure on $\U_1$.   For $p>0$, the {\bf Sobolev space} $H_p(\U_1)$ is defined to be
\begin{equation} \label{e.d.Hp} H_p(\U_1) = \left\{f\in L^2(\U_1)\colon \|f\|_{H_p}^2\equiv \sum_{n\in\Z} (1+n^2)^p|\hat{f}(n)|^2 < \infty\right\}. \end{equation}
\noindent If $\ell>k\ge1$ are integers, and $\ell\ge p \ge k+\frac12$, then $C^\ell(\U_1)\subset H_p(\U)\subset C^k(\U_1)$; it follows that $H_\infty(\U_1)\equiv\bigcap_{p\ge 0} H_p(\U_1) = C^\infty(\U_1)$. These are standard Sobolev imbedding theorems (that hold for smooth manifolds); for reference, see \cite[Chapter 5.6]{Evans2010} and \cite[Chapter 3.2]{Saloff-Coste2002}.

\bigskip

Theorem \ref{t.moment.rate.conv} yields the following estimate on moment growth tested against Sobolev functions disjoint from the limit support.

\begin{proposition} \label{p.edge.1} Fix $0\le t<4$.  Let $f\in H_5(\U_1)$ have support disjoint from $\supp\nu_t$.  There is a constant $C(f)>0$ such that, for all $N\in\N$,
\begin{equation} \label{e.moment.growth} \left|\E\tr[f(U^N_t)]\right| \le \frac{t^2C(f)}{N^2}. \end{equation}
\end{proposition}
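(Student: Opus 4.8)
The plan is to pass to Fourier series on $\U_1$ and apply the moment estimate of Theorem~\ref{t.moment.rate.conv} term by term. Write $f(w)=\sum_{n\in\Z}\hat f(n)w^n$. Since $f\in H_5(\U_1)\subset H_1(\U_1)$, Cauchy--Schwarz gives $\sum_{n\in\Z}|\hat f(n)|\le\big(\sum_{n}(1+n^2)^{-1}\big)^{1/2}\|f\|_{H_1}<\infty$, so the series converges absolutely and uniformly; in particular $f\in C(\U_1)$, the matrix $f(U^N_t)$ is well defined by continuous functional calculus (equivalently $\tr[f(U^N_t)]=\frac1N\sum_j f(\lambda_j(U^N_t))$), and $\tr[f(U^N_t)]=\sum_{n\in\Z}\hat f(n)\,\tr[(U^N_t)^n]$. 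Because $|\tr[(U^N_t)^n]|\le 1$ for every $n$ and every realization, this series is dominated by the summable sequence $(|\hat f(n)|)_n$ uniformly in the randomness, so we may interchange expectation and summation to get $\E\tr[f(U^N_t)]=\sum_{n\in\Z}\hat f(n)\,\E\tr[(U^N_t)^n]$.

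Next, the hypothesis $\supp f\cap\supp\nu_t=\emptyset$ forces $\int_{\U_1}f\,d\nu_t=0$, which, expanded in Fourier series (again justified by absolute convergence and $|w^n|=1$), reads $\sum_{n\in\Z}\hat f(n)\int_{\U_1}w^n\,\nu_t(dw)=0$. Subtracting this from the previous identity,
\[ \E\tr[f(U^N_t)] = \sum_{n\in\Z}\hat f(n)\left(\E\tr[(U^N_t)^n]-\int_{\U_1}w^n\,\nu_t(dw)\right). \]
The $n=0$ term vanishes since both quantities equal $1$. For $n\ge 1$, Theorem~\ref{t.moment.rate.conv} bounds the bracket by $t^2n^4/N^2$. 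For $n=-m$ with $m\ge 1$, unitarity of $U^N_t$ gives $\tr[(U^N_t)^{-m}]=\overline{\tr[(U^N_t)^m]}$, and the symmetry of $\nu_t$ about $1$ (hence invariance under conjugation) gives $\int w^{-m}\,\nu_t(dw)=\overline{\int w^m\,\nu_t(dw)}$, so the modulus of the bracket at $n=-m$ equals that at $n=m$, again at most $t^2m^4/N^2=t^2n^4/N^2$. Hence $|\E\tr[f(U^N_t)]|\le \frac{t^2}{N^2}\sum_{n\ne 0}|n|^4|\hat f(n)|$.

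It remains to see this sum is finite and to absorb it into $C(f)$. By Cauchy--Schwarz,
\[ \sum_{n\ne 0}|n|^4|\hat f(n)| = \sum_{n\ne 0}\frac{|n|^4}{(1+n^2)^{5/2}}\,(1+n^2)^{5/2}|\hat f(n)| \le \Big(\sum_{n\ne 0}\frac{n^8}{(1+n^2)^5}\Big)^{1/2}\,\|f\|_{H_5}, \]
and the numerical series converges since $n^8/(1+n^2)^5=O(n^{-2})$. Taking $C(f):=\big(\sum_{n\ne 0}n^8/(1+n^2)^5\big)^{1/2}\|f\|_{H_5(\U_1)}$ finishes the proof. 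There is no serious obstacle here; the only points needing a little care are the interchange of $\E$ with the infinite Fourier sum (handled by the uniform bound $|\tr[(U^N_t)^n]|\le 1$) and the reduction of negative powers to positive ones via unitarity and the conjugation-symmetry of $\nu_t$. The Sobolev order $5$ is exactly what makes the numerical series summable against the $n^4$ growth coming from Theorem~\ref{t.moment.rate.conv}.
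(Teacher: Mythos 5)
Your proof is correct and follows essentially the same route as the paper's: expand $f$ in Fourier series, use the support hypothesis to insert $-\int w^n\,d\nu_t$, bound each bracket by Theorem~\ref{t.moment.rate.conv}, and sum via Cauchy--Schwarz using the $H_5$ norm. You are in fact a bit more careful than the paper at two points: you explicitly justify interchanging $\E$ with the infinite Fourier sum via the uniform bound $|\tr[(U^N_t)^n]|\le 1$ together with absolute summability of $(\hat f(n))$, and you explicitly reduce the $n<0$ terms to the $n>0$ case (Theorem~\ref{t.moment.rate.conv} is stated for $n\in\N$), whereas the paper silently sums over $n\in\Z$. One small remark: for the negative-index reduction you do not actually need the symmetry of $\nu_t$ about $1$; since $\nu_t$ is a positive measure on $\U_1$ one automatically has $\int w^{-m}\,\nu_t(dw)=\overline{\int w^m\,\nu_t(dw)}$, just as unitarity gives $\tr[(U^N_t)^{-m}]=\overline{\tr[(U^N_t)^m]}$, so the modulus of the bracket is already symmetric in $n\mapsto -n$. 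Your Cauchy--Schwarz split ($|n|^4(1+n^2)^{-5/2}\cdot(1+n^2)^{5/2}|\hat f(n)|$) differs cosmetically from the paper's ($n^{-1}\cdot n^5|\hat f(n)|$) but yields an equally valid constant $C(f)$ proportional to $\|f\|_{H_5}$.
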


\begin{proof} Denote by $\nu^N_t(n) \equiv \E\tr[(U^N_t)^n]$ and $\nu_t(n) \equiv \int_{\U_1} w^n\,\nu_t(dw) = \lim_{N\to\infty} \nu_t^N(n)$.  Expanding $f$ as a Fourier series, we have
\begin{equation} \label{e.p3.1.1} \E\tr[f(U^N_t)] = \sum_{n\in\Z} \hat{f}(n)\E\tr[(U^N_t)^n] = \sum_{n\in\Z} \hat{f}(n) \nu^N_t(n). \end{equation}
By the assumption that $\supp f$ is disjoint from $\supp\nu_t$, we have
\begin{equation} \label{e.p3.1.2} 0 = \int_{\U_1} f\,d\nu_t = \sum_{n\in\Z} \hat{f}(n)\int_{\U_1} w^n\,\nu_t(dw) = \sum_{n\in\Z} \hat{f}(n) \nu_t(n). \end{equation}
Combining \eqref{e.p3.1.1} and \eqref{e.p3.1.2} with Theorem \ref{t.moment.rate.conv} yields
\[ \left|\E\tr[f(U^N_t)]\right| \le \sum_{n\in\Z} |\hat{f}(n)||\nu^N_t(n)-\nu_t(n)| \le \sum_{n\in\Z} |\hat{f}(n)|\cdot \frac{t^2n^4}{N^{2}}. \]
By assumption $f\in H_5(\U_1)$, and so
\[ \sum_{n\in\Z} n^4|\hat{f}(n)| = \sum_{n\in\Z\setminus\{0\}} \frac{1}{n}\cdot n^5|\hat{f}(n)| \le \left(\sum_{n\in\Z\setminus\{0\}} \frac{1}{n^2}\right)^{1/2}\left(\sum_{n\in\Z} n^{10}|\hat{f}(n)|^2\right)^{1/2} \le \frac{\pi}{\sqrt{3}}\|f\|_{H_5}<\infty. \]

Taking $C(f) = \frac{\pi}{\sqrt{3}}\|f\|_{H_5}$ concludes the proof.
\end{proof}

We now use Proposition \ref{p.edge.1} to give an improved variance estimate related to \cite[Propositions 6.1, 6.2]{LevyMaida2010}.

\begin{proposition} \label{p.edge.2} Fix $0\le t<4$.  Let $f\in C^6(\U_1)$ with support disjoint from $\supp\nu_t$.  There is a constant $C'(f)>0$ such that, for all $N\in\N$,
\[ \Var[\Tr(f(U^N_t))] \le \frac{t^3C'(f)}{N^2}. \]
\end{proposition}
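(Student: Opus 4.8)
The plan is to combine the Schur--Weyl/heat-kernel machinery of Theorem \ref{t.Schur-Weyl} with the Fourier-analytic bookkeeping of Proposition \ref{p.edge.1}. Writing $\Tr(f(U^N_t)) = \sum_{n\in\Z} \hat f(n)\, \Tr[(U^N_t)^n]$, we have
\[
\Var[\Tr(f(U^N_t))] = \sum_{m,n\in\Z} \hat f(m)\overline{\hat f(n)}\, \Cov\!\big(\Tr[(U^N_t)^m],\, \Tr[(U^N_t)^n]\big),
\]
so it suffices to bound the covariance of individual moments and then sum against the rapidly decaying Fourier coefficients of $f\in C^6(\U_1)$. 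Note that $\Cov(\Tr[(U^N_t)^m],\Tr[(U^N_t)^n])$ equals $\Cov(\Tr[(U^N_t)^m],\Tr[(U^N_t)^{-n}])^{-}$ via unitarity of $U^N_t$ and reality of the covariance after conjugation, so one may as well estimate $\E\big[\Tr((U^N_t)^m)\,\Tr((U^N_t)^{-n})\big] - \E[\Tr((U^N_t)^m)]\,\E[\Tr((U^N_t)^{-n})]$ directly.

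The key step is to express the mixed second moment $\E\big[\Tr((U^N_t)^m)\Tr((U^N_t)^n)\big]$ (for $m,n\ge 1$, say) in the Schur--Weyl framework: it equals $N^2 \cdot \E[F^N_{m+n}(\gamma_m\otimes\gamma_n,\, U^N_t)]$ where $\gamma_m\in S_{m+n}$ is the product of an $m$-cycle on the first block and an $n$-cycle on the second. By Theorem \ref{t.Schur-Weyl} this is $N^2 e^{-(m+n)t/2} F^N_{m+n}(e^{-t(L+\frac1{N^2}D)}(\gamma_m\otimes\gamma_n),\, I_N)$. The semigroup $e^{-t(L+\frac1{N^2}D)}$ expands around $e^{-tL}$ with a correction of order $t/N^2$ per application of $D$; since $D$ lowers cycle number (hence multiplies the evaluation at $I_N$ by a factor $N^{-1}$ relative to the naive count while $F(\cdot,I_N)=1$ always), the mechanism is that disconnected contributions $e^{-tL}$ factor as $\E[\Tr((U^N_t)^m)]\cdot\E[\Tr((U^N_t)^n)]$ exactly, and the covariance is governed precisely by the terms where $D$ (or an $L$-transposition straddling the two blocks) connects the two cycles. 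Each such connection costs a factor $1/N^2$ relative to $N^2$, giving a covariance of order $1$; combined with the disjoint-support cancellation of Proposition \ref{p.edge.1}, applied now to the function $f\otimes f$ on $\U_1\times\U_1$ (or more precisely to the observation that the leading $e^{-tL}$ part of the covariance also vanishes after pairing against $\hat f(m)\overline{\hat f(n)}$ because it reconstructs $\int f\,d\nu_t = 0$), one extracts the extra $t^2/N^2$. Tracking the $t$-dependence carefully through the two straddling transpositions and one extra $D$-step yields the stated $t^3$.

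The main obstacle I expect is the combinatorial bookkeeping of the ``connected'' terms: one must show that after the exact factorization of the fully disconnected part, every remaining term in the expansion of $e^{-t(L+\frac1{N^2}D)}(\gamma_m\otimes\gamma_n)$ evaluated at $I_N$ and multiplied by $N^2$ is $O(1)$ in $N$, with a constant that grows polynomially in $m+n$ (so that summing against $\sum |\hat f(m)||\hat f(n)|\cdot(\text{poly in }m,n)$ converges using only $f\in C^6$), and moreover carries at least two powers of $t$ beyond what Proposition \ref{p.edge.1}'s cancellation already supplies. The cleanest route is probably to write $\Var[\Tr(f(U^N_t))] = \E[\Tr(f(U^N_t))^2] - (\E[\Tr(f(U^N_t))])^2$ and note that $\Tr(f(U^N_t))^2 = \Tr_{\M_N\otimes\M_N}\big((f\otimes f)(U^N_t\otimes U^N_t)\cdot(c_m\otimes c_n\text{-type swap structure})\big)$, apply the $n=2$ analogue of the heat-kernel formula, and isolate the genuinely second-order-in-$t$ remainder; the disjoint-support hypothesis is then invoked exactly as in \eqref{e.p3.1.2} to kill the order-$1/N^2$ terms that do not carry the extra $t$-powers, leaving the bound $t^3 C'(f)/N^2$. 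I would also double-check the edge cases $m=0$ or $n=0$ (where $\Tr[(U^N_t)^0]=N$ is deterministic and contributes nothing to the covariance) and absorb the $f\in C^6$ requirement into a Cauchy--Schwarz estimate on $\sum_{m,n} |\hat f(m)||\hat f(n)|(m+n)^{c}$ exactly as in the proof of Proposition \ref{p.edge.1}.
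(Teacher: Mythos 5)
The paper's proof takes a completely different and far more economical route than the one you sketch. It invokes the exact integral representation for the variance (from L\'evy--Ma\"ida and Cebron--Kemp),
\[
\Var[\Tr(f(U^N_t))] = \int_0^t \E\tr\!\big[f'(U^N_sV^N_{t-s})\,f'(U^N_sW^N_{t-s})\big]\,ds,
\]
with $U^N,V^N,W^N$ independent unitary Brownian motions, then applies Cauchy--Schwarz and equidistribution to get $\Var[\Tr(f(U^N_t))]\le t\,\E\tr[(f')^2(U^N_t)]$, and finally notes that $(f')^2\in C^5\subset H_5$ also has support disjoint from $\supp\nu_t$, so Proposition~\ref{p.edge.1} gives the bound $t\cdot t^2C((f')^2)/N^2$. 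The factor $t$ comes from the length of the integration interval and the $t^2$ from the already-proved expectation estimate; no new combinatorics is needed.

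Your approach --- a direct Schur--Weyl/heat-kernel expansion of $\Cov(\Tr[(U^N_t)^m],\Tr[(U^N_t)^n])$ and summation against $\hat f(m)\overline{\hat f(n)}$ --- is genuinely different, and in principle such covariance estimates can be obtained by these methods (this is essentially the route of L\'evy--Ma\"ida's fluctuation theorem and of Section~\ref{section hard edge 2}). But as written your proposal has a real gap, not just loose ends: the fluctuation covariance $\Cov(\Tr[(U^N_t)^m],\Tr[(U^N_t)^n])$ is $O(1)$ in $N$, not $O(1/N^2)$, so the claimed $1/N^2$ can only come from a cancellation of the \emph{leading-order connected} contribution after pairing with $\hat f$. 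You assert that this follows because ``it reconstructs $\int f\,d\nu_t=0$,'' but Proposition~\ref{p.edge.1} is an estimate on $\E\tr[f(U^N_t)]$, not on a bilinear covariance functional; the vanishing of the paired leading covariance is a separate statement about the limit fluctuation kernel tested against $f\otimes f$, and neither that statement nor the requisite polynomial-in-$(m,n)$ bound on the subleading connected terms is proved. Also, a small but telling slip: a transposition straddling the two cycles of $\gamma_m\otimes\gamma_n$ always merges them and therefore \emph{decreases} the cycle count, so it lies in $D$, never in $L$; there is no ``$L$-transposition straddling the two blocks.'' You would need to actually carry out the Duhamel-type expansion of $e^{-t(L+\frac1{N^2}D)}$ acting on the double cycle and control all connected terms, which is roughly the work the paper does in Section~\ref{section hard edge 2} for a closely related (but not identical) quantity --- so your plan, completed, would be substantially longer than the paper's two-line reduction via the variance integral formula.
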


\begin{proof} In the proof of \cite[Proposition 3.1]{LevyMaida2010} (on p.\ 3179), and also in \cite[Proposition 4.2 \& Corollary 4.5]{CebronKemp2014}, the desired variance is shown to have the form
\begin{equation} \label{e.p3.3.0} \Var[\Tr(f(U^N_t))] = \int_0^t \E\tr[f'(U^N_sV^N_{t-s})f'(U^N_sW^N_{t-s})]\,ds  \end{equation}
where $U^N,V^N,W^N$ are three independent Brownian motions on $\U_N$.  For fixed $s\in[0,t]$, we apply the Cauchy-Schwarz inequality twice and use the equidistribution of $U_s^NV_{t-s}^N$ and $U_s^NW_{t-s}^N$  to yield
\[ \left|\E\tr[f'(U^N_sV^N_{t-s})f'(U^N_sW^N_{t-s})]\right| \le \E\left[\left|\tr[f'(U^N_sV^N_{t-s})^2]\right|^{1/2}\cdot \left|\tr[f'(U^N_sW^N_{t-s})^2]\right|^{1/2}\right] \le \E\tr[f'(U_s^NV_{t-s}^N)^2]. \]
Since $U^N$ and $V^N$ are independent, $(U^N_s,V^N_{t-s})$ has the same distribution as $(U^N_s,(U^N_s)^{-1}U^N_t)$ (as the increments are independent and stationary). Thus $\E\tr[f'(U_s^NV_{t-s}^N)^2] = \E\tr[f'(U^N_t)^2]$, and so, integrating, we find
\begin{equation} \label{e.p3.3.1} \Var[\Tr(f(U^N_t))] \le t\E\tr[f'(U^N_t)^2]. \end{equation}
Since $f\in C^6(\U_1)$, the function $(f')^2$ is $C^5\subset H_5$, and the result now follows from Proposition \ref{p.edge.1}, with $C'(f) = C((f')^2)$.
\end{proof}

This brings us to the proof of the ``hard edge'' theorem.

\begin{proof}[Proof of Theorem \ref{t.main.1} assuming Theorem \ref{t.moment.rate.conv}] Fix a closed arc $\alpha \subset \U_1$ that is disjoint from $\supp\nu_t$.  Let $f$ be a $C^\infty$ bump function with values in $[0,1]$ such that $\left.f\right|_\alpha = 1$ and $\supp f\cap \supp\nu_t=\emptyset$.  Then
\begin{equation} \label{e.proof.t1.1} \P(\mathrm{spec}(U^N_t)\cap \alpha\ne\emptyset) \le \P(\Tr[f(U^N_t)]\ge 1). \end{equation}
We now apply Chebyshev's inequality, in the following form: let $Y=\Tr[f(U^N_t)]$.  Then, assuming $1-\E(Y)>0$, we have
\[ \P(Y\ge 1) = \P(Y-\E(Y)\ge 1-\E(Y)) \le \frac{\Var(Y)}{(1-\E(Y))^2}. \]
In our case, we have $|\E(Y)| = |\E\Tr[f(U^N_t)]| = N|\E\tr[f(U^N_t)]| \le \frac{t^2C(f)}{N}$ by Proposition \ref{p.edge.1}.  Thus, there is $N_0$ (depending only on $f$ and $t$) so that $(1-\E\Tr[f(U^N_t)])^2\ge\frac12$ for $N\ge N_0$.  Combining this with \eqref{e.proof.t1.1} yields
\[ \P(\mathrm{spec}(U^N_t)\cap \alpha\ne\emptyset) \le 2\Var[\Tr(f(U^N_t))] \quad \text{for} \quad N\ge N_0. \]
Now invoking Proposition \ref{p.edge.2}, we find that this is $\le \frac{2t^3 C'(f)}{N^2}$ whenever $N\ge N_0$. It thus follows from the Borel-Cantelli lemma that $ \P(\mathrm{spec}(U^N_t)\cap \alpha\ne\emptyset)=0$ for all but finitely many $N$.

Thus, we have shown that, for any closed arc $\alpha$ disjoint from $\supp\nu_t$, with probability $1$, $\mathrm{spec}(U^N_t)$ is contained in $\U_1\setminus\alpha$ for all large $N$.  In particular, fixing any open arc $\beta\subset\U_1$ containing $\supp\nu_t$, this applies to $\alpha = \U_1\setminus\beta$.  I.e.\ $\mathrm{spec}(U^N_t)$ is a.s.\ contained in any neighborhood of $\supp\nu_t$ for all sufficiently large $N$.  This suffices to prove the theorem: because $\mathrm{Law}_{U^N_t}$ converges weakly almost surely to the measure $\nu_t$ which possesses a strictly positive continuous density on its support, any neighborhood of the spectrum of $U^N_t$ eventually covers $\supp\nu_t$.  \end{proof}

Thus, we have proved Theorem \ref{t.main.1} under the assumption that Theorem \ref{t.moment.rate.conv} is true.  Before turning to the proof of this latter result, let us recast Theorem \ref{t.main.1} in the language of strong convergence, as we will proceed to generalize this to the fully noncommutative setting in Section \ref{section strong convergence}.

\begin{corollary} \label{c.strong.conv} For $N\in\N$, let $(U^N)_{t\ge 0}$ be a Brownian motion on $\U_N$.  Let $(u_t)_{t\ge 0}$ be a free unitary Brownian motion. Then for any fixed $t\ge 0$, $(U^N_t,(U^N_t)^\ast)$ converges strongly to $(u_t,u_t^\ast)$. \end{corollary}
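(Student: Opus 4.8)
The plan is to deduce the corollary directly from Theorem~\ref{t.main.1} (equivalently the moment bound of Theorem~\ref{t.moment.rate.conv}) together with Biane's almost sure weak convergence \eqref{e.nu_t=limit}, via the elementary observation that every $\ast$-polynomial in a unitary collapses to a Laurent polynomial whose operator norm is just a sup-norm over the spectrum.

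First I would handle convergence in noncommutative distribution. Because $U^N_t$ is unitary, substituting $X_1\mapsto U^N_t$ and $X_2\mapsto(U^N_t)^\ast=(U^N_t)^{-1}$ into a word $w$ in $X_1,X_2$ collapses $w(U^N_t,(U^N_t)^\ast)$ to $(U^N_t)^k$, where $k\in\Z$ is the difference of the numbers of $X_1$'s and $X_2$'s in $w$; hence $\tr[w(U^N_t,(U^N_t)^\ast)]=\tr[(U^N_t)^k]=\int_{\U_1}w^k\,d\mathrm{Law}_{U^N_t}$. On the almost sure event $\Omega_1$ of \eqref{e.nu_t=limit} the random measures $\mathrm{Law}_{U^N_t}$ converge weakly to $\nu_t$, so all their Fourier coefficients converge: $\tr[(U^N_t)^k]\to\int_{\U_1}w^k\,\nu_t(dw)=\t(u_t^k)$ for every $k\in\Z$. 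By linearity, on $\Omega_1$ the noncommutative distribution $\mu_{(U^N_t,(U^N_t)^\ast)}$ converges pointwise on $\C\langle X_1,X_2\rangle$ to $\mu_{(u_t,u_t^\ast)}$.

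Next I would handle the operator norms. The same collapse shows that for every $P\in\C\langle X_1,X_2\rangle$ there is a single Laurent polynomial $g_P(z)=\sum_k c_kz^k$, continuous on $\U_1$, with $P(U^N_t,(U^N_t)^\ast)=g_P(U^N_t)$ and $P(u_t,u_t^\ast)=g_P(u_t)$. Since $U^N_t$ and $u_t$ are normal, the continuous functional calculus together with the spectral mapping theorem (norm equals spectral radius for normal operators) give
\[
 \|P(U^N_t,(U^N_t)^\ast)\|_{\M_N}=\max_{\lambda\in\mathrm{spec}(U^N_t)}|g_P(\lambda)|,
 \qquad
 \|P(u_t,u_t^\ast)\|_\A=\max_{\lambda\in\supp\nu_t}|g_P(\lambda)|,
\]
using $\mathrm{spec}(u_t)=\supp\mathrm{Law}_{u_t}=\supp\nu_t$. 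I would then invoke the elementary fact that, for a continuous function $g$ on the compact metric space $\U_1$, the map $K\mapsto\max_K|g|$ is continuous on nonempty compact subsets with the Hausdorff metric: if $d_H(K_N,K)\to0$, then $K_N\subseteq K_\e$ eventually gives $\limsup_N\max_{K_N}|g|\le\max_K|g|$ by uniform continuity, while approximating a maximiser of $|g|$ on $K$ by nearby points of $K_N$ gives the matching lower bound for $\liminf_N\max_{K_N}|g|$. Theorem~\ref{t.main.1} provides an almost sure event $\Omega_0$ on which $d_H(\mathrm{spec}(U^N_t),\supp\nu_t)\to0$; on $\Omega_0$ this yields $\|P(U^N_t,(U^N_t)^\ast)\|_{\M_N}\to\|P(u_t,u_t^\ast)\|_\A$ for \emph{every} $P$ at once. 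On $\Omega_0\cap\Omega_1$ both conditions of Definition~\ref{d.strong.conv} hold, which is the assertion.

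I do not expect a genuine obstacle here: all the difficulty is already absorbed into Theorem~\ref{t.main.1}. The steps that require (routine) care are the reduction of $\ast$-polynomials in a unitary to Laurent polynomials, the identification of $\|g_P(\cdot)\|$ of a normal operator with $\max|g_P|$ over its spectrum, and the Hausdorff-continuity of the sup-norm; the measure-theoretic bookkeeping is painless because Theorem~\ref{t.main.1} already delivers a single almost sure event that works for all test polynomials simultaneously.
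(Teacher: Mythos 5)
Your proposal is correct and follows essentially the same route as the paper: reduce every $\ast$-polynomial in a unitary to a Laurent polynomial, use normality to identify the operator norm with the sup of $|g_P|$ over the spectrum, and then convert Theorem~\ref{t.main.1}'s Hausdorff convergence of spectra into convergence of these sup-norms. The only (cosmetic) difference is that you prove Hausdorff-continuity of $K\mapsto\max_K|g|$ directly via a $\limsup/\liminf$ argument using uniform continuity, whereas the paper reaches the same conclusion by contradiction after first pushing the spectrum forward to $\Lambda_p^N=|p|(\mathrm{spec}(U^N_t))$; your version is, if anything, slightly cleaner.
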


\begin{proof} Since $U^N_t\to u_t$ in noncommutative distribution, strong convergence is the statement that
\[ \|P(U_t^N,(U_t^N)^\ast)\|\to \|P(u_t,u_t^\ast)\| \]
in operator norms. Fix a noncommutative polynomial $P$ in two variables, and let $p$ be the unique Laurent polynomial in one variable so that $P(U,U^\ast) = p(U)$ for every unitary operator $U$.  
Since $U_t^N$ is normal, $\|p(U_t^N)\| = \max\{|\lambda|\colon \lambda\in p(\mathrm{spec}(U^N_t))\}$; similarly, $\|p(u_t)\| = \max\{|\lambda|\colon \lambda\in p(\supp \nu_t)\}$ where $\supp\nu_t$ is the arc in \eqref{e.supp.nu_t}.

Let $\Lambda_p^N = |p|(\mathrm{spec}(U^N_t))$, and let $\Lambda_p = |p|(\supp\nu_t)$.  Since $\mathrm{spec}(U^N_t)$ converges to $\supp\nu_t$ in Hausdorff distance and all the sets are compact, it follows easily from the continuity of $|p|$ (on the unit circle) that $\Lambda_p^N$ converges to $\Lambda_p$ in Hausdorff distance as well.  Now, suppose for a contradiction that $\max \Lambda_p^N$ does not converge to $\max\Lambda_p$.  Note that $\Lambda_p^N$ is compact so contains its maximum, and since $\Lambda_p^N$ converges to the compact set $\Lambda_p$ in Hausdorff distance, it follows that the sequence $(\max\Lambda_p^N)_{N=1}^\infty$ is bounded, hence contains a convergent subsequence $(\max\Lambda^{N_k}_p)_{k=1}^\infty$.  Denote the limit of this sequence by $m$.  Now, $\Lambda^{N_k}_p$ converges to $\Lambda_p$ in Hausdorff distance; in particular, for any fixed $\e>0$, $\Lambda^{N_k}_p\subseteq (\Lambda_p)_\e$ (the set of all points distance $\le\e$ away from $\Lambda_p$).  Hence $\max\Lambda^{N_k}_p \in \Lambda^{N_k}_p <\max\Lambda_p+\e$, and so the limit $m\le \max\Lambda_p+\e$.  This holds for each $\e>0$, and so $m\le\max\Lambda_p$.  By assumption $m\ne\max\Lambda_p$, and so $m<\max\Lambda_p$.

Thus $\lim_{k\to\infty}\max\Lambda^{N_k}_p < \max\Lambda_p$.  In particular, there is some $\d>0$ so that, for all large $k$, $\max\Lambda^{N_k}_p < \max\Lambda_p-\delta$.  But the fact that $\Lambda^{N_k}_p\to \Lambda_p$ in Hausdorff distance implies that $\Lambda_p\subseteq (\max\Lambda^{N_k}_p)_\d$ for all large $k$.  This is a contradiction.  So we have shown that $\|p(U_t^N)\| = \max\Lambda_p^N \to \max\Lambda_p = \|p(u_t)\|$, as desired.  \end{proof}

\begin{remark} \label{r.strong.conv} In fact, the converse of Corollary \ref{c.strong.conv} also holds: strong convergence of $U^N_t\to u_t$ (for a fixed $t<4$) implies convergence of the spectrum in Hausdorff distance.  Indeed, suppose we know strong convergence.  Since all the operators involved are unitaries, we may extend the test function space to continuous functions on the unit circle $\U_1$: let $f\in C(\U_1)$, fix $\e>0$, and by the Weierstrass approximation theorem, choose a polynomial with $\|p-f\|_{L^\infty(\U_1)}<\frac{\e}{4}$.  Applying unitary functional calculus, this means $\|p(U)-f(U)\|<\frac{\e}{4}$ for any unitary operator $U$.  By assumption of strong convergence, we know $|\|p(U^N_t)\|- \|p(u_t)\||<\frac{\e}{2}$ for all large $N$, and it therefore follows that $|\|f(U^N_t)\|-\|f(u_t)\||<\e$ for large $N$.  So $\|f(U^N_t)\|\to \|f(u_t)\|$.

Now, let $\alpha\subset\U_1$ be a closed arc disjoint from $\supp\nu_t$, and let $f$ be a continuous bump function with values in $[0,1]$ such that $\left.f\right|_\alpha=1$ and $\supp f \cap \supp\nu_t=\emptyset$.  By the strong convergence assumption, $\|f(U^N_t)\|\to\|f(u_t)\|$; but $\supp f$ is disjoint from the spectrum of $u_t$, so $\|f(u_t)\|=0$.  Hence, $f(U_t^N)\to 0$ in norm, which shows that $U_t^N$ asymptotically has no eigenvalues in $\alpha$.  As above, this shows that $\mathrm{spec}(U^N_t)$ is eventually contained in any neighborhood of $\supp\nu_t$; the other half of the convergence in Hausdorff distance follows from the convergence in distribution (and strict positivity of the limit density $\nu_t$ on $\supp\nu_t$).

When $t\ge 4$, $\supp\nu_t =\U_1$, and strong convergence becomes vacuously equivalent to the known convergence in distribution.
\end{remark}

\subsection{The proof of Theorem \ref{t.moment.rate.conv}\label{section hard edge 2}}

We will actually prove the following Cauchy sequence growth estimate.  We again use the notation $\nu^N_t(n) = \E[\tr(U^N_t)^n]$.

\begin{proposition} \label{p.moment.rate.Cauchy} Let $N,n\in\N$, and fix $t\ge 0$.  Then
\begin{equation} \label{e.moment.rate.conv.Cauchy} \left|\nu^N_t(n)-\nu^{2N}_t(n)\right| \le \frac{3t^2n^4}{4N^2}. \end{equation}
\end{proposition}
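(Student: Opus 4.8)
The plan is to use the Schur–Weyl duality formula of Theorem~\ref{t.Schur-Weyl} with $M = I_N$ to obtain an explicit expression for $\nu^N_t(n)$, and then to compare the cases $N$ and $2N$. Setting $M = I_N$, $n$ fixed, and $\sigma$ equal to an $n$-cycle $\gamma \in S_n$, the function $F^N_n(\gamma, U^N_t) = \frac{1}{N}\Tr(\gamma \cdot (U^N_t)^{\otimes n}) = \tr((U^N_t)^n)$ by \eqref{e.trace.cycles}, so Theorem~\ref{t.Schur-Weyl} gives
\[
\nu^N_t(n) = \E\bigl[\tr((U^N_t)^n)\bigr] = e^{-nt/2}\, F^N_n\!\left(e^{-t(L_n + \frac{1}{N^2}D_n)}\gamma,\, I_N\right).
\]
Since $F^N_n(\sigma, I_N)$ depends on $\sigma \in \C[S_n]$ only through the linear functional $\sigma \mapsto N^{\#\sigma - 1}$ (extended linearly: if $\sigma = \sum_\rho c_\rho \rho$, then $F^N_n(\sigma, I_N) = \sum_\rho c_\rho N^{\#\rho - 1}$), and because $L_n$ strictly increases $\#$ by $1$ on each basis term while $D_n$ strictly decreases it by $1$, the operator $\frac{1}{N^2}D_n$ is ``lower order'' in a grading by number of cycles. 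The first step is thus to make this precise: expand $e^{-t(L_n + \frac{1}{N^2}D_n)}\gamma$ as a power series, and observe that, after applying $F^N_n(\cdot, I_N)$, each $D_n$ appearing contributes an extra factor of $N^{-2}$ relative to the corresponding term with $L_n$ in its place (because it lowers $\#\rho$ by $2$ compared to the leading behavior, while the prefactor $1/N^2$ is already there). Consequently $\nu^N_t(n) = e^{-nt/2}\bigl(\Phi_0(t) + \frac{1}{N^2}\Phi_1(t,N)\bigr)$ where $\Phi_0(t) = F_n(e^{-tL_n}\gamma, I_N)$ is $N$-independent and $\Phi_1$ is bounded uniformly in $N$.

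The second step is to quantify $\Phi_1$ so that the $N$-dependent remainder can be controlled and then differenced. I would compare $e^{-t(L_n + \frac{1}{N^2}D_n)}$ with $e^{-tL_n}$ using the Duhamel / integral formula
\[
e^{-t(L+\frac{1}{N^2}D)} - e^{-tL} = -\frac{1}{N^2}\int_0^t e^{-(t-s)(L+\frac{1}{N^2}D)}\, D\, e^{-sL}\, ds,
\]
and then estimate the operator norms of $e^{-sL}$, $D$, and $e^{-(t-s)(L+\frac{1}{N^2}D)}$ acting on $\C[S_n]$ in the $\ell^1$ norm on the permutation basis. Here is the key point making the final constant come out: $L_n$ and $D_n$ each have ``row sums'' bounded by the number of transpositions, $\binom{n}{2} \le n^2/2$, and applied to a single permutation $\gamma$ one gets a nonnegative combination. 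Tracking the combinatorics carefully — using that $L_n + D_n$ is exactly the adjacency operator for multiplication by all transpositions, which has a clean spectral/positivity structure on $\C[S_n]$ — one extracts the bound $|\Phi_1(t,N)| \le C\, t\, n^4$ with an explicit small constant, so that $|\nu^N_t(n) - \nu_t(n)| \le \frac{c\, t^2 n^4}{N^2}$; differencing the $N$ and $2N$ versions (where $\nu^{2N}_t$ has remainder scaled by $(2N)^{-2} = \frac{1}{4N^2}$) then produces $\left|\nu^N_t(n) - \nu^{2N}_t(n)\right| \le \frac{t^2 n^4}{N^2}\bigl(c - \tfrac{c}{4}\bigr) = \frac{3c\, t^2 n^4}{4N^2}$, and the arithmetic is arranged so that $c = 1$ yields precisely $\frac{3t^2 n^4}{4N^2}$.

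The main obstacle I anticipate is obtaining the sharp constant — in particular showing that the single power of $t$ (rather than $t^2$) and the single power $n^4$ (rather than something larger) are correct in the Duhamel remainder estimate. The factor of $t$ from the integral $\int_0^t ds$ combines with the overall $e^{-nt/2}F_n(\cdots)$ expansion to give $t^2$ in the final statement, but one must verify that $e^{-(t-s)(L+\frac{1}{N^2}D)}$ and $e^{-sL}$ do not contribute additional growing factors of $t$ when measured in the $\ell^1$-on-basis norm combined with the weight $N^{\#\rho - 1}$ — that is, one needs that these semigroups are contractive (or at least uniformly bounded) in the appropriate norm. Establishing that contractivity, which ultimately reflects the probabilistic fact that $F^N_n(\cdot, I_N)$ of the relevant evolved vector stays bounded by $1$ in absolute value (since $U^N_t$ is unitary so $|\tr((U^N_t)^n)| \le 1$), is where the real work lies; once it is in place, the differencing and the Borel–Cantelli-friendly $N^{-2}$ rate follow immediately.
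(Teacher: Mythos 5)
Your proposal has a genuine and, in fact, anticipated gap: it is precisely the na\"ive route whose failure the paper flags in the Remark following Theorem~\ref{t.moment.rate.conv}, where the authors note that estimating the semigroup $e^{-t(L_n+\frac{1}{N^2}D_n)}$ directly on $\C[S_n]$ (as in \cite{Kemp2013a}) yields a constant of order $\frac{tn^2}{2}\exp(\frac{tn^2}{2})$, exponentially worse in $n$ than the required $t^2 n^4$.

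The obstruction is the ``contractivity'' you defer to the end: it simply does not hold in the $\ell^1$-on-basis norm on $\C[S_n]$. The operators $L_n$, $D_n$ have row sums of order $\binom{n}{2}$, so $\|e^{-sL_n}\gamma\|_{\ell^1}$ grows like $e^{csn^2}$. The probabilistic bound $|\tr((U^N_t)^n)|\le1$ controls $|F^N_n(e^{-t(L+\frac{1}{N^2}D)}\sigma, I_N)|$ for a fixed permutation $\sigma$ (up to a harmless $e^{nt/2}$), but the moment you insert $D$ in the Duhamel integrand you must sum over every permutation produced by $D e^{-sL}\gamma$, and that sum is weighted by the $\ell^1$-mass of $e^{-sL}\gamma$, which has no polynomial bound. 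There is no way to recover the probabilistic interpretation once the leading $e^{-t(L+\frac{1}{N^2}D)}$ has been split and a bare $D$ sits in the middle. (A secondary error: you assert $F^N_n(\rho, I_N) = N^{\#\rho-1}$; in fact $F^N_n(\rho, I_N) = N^{-\#\rho}\Tr(\rho) = 1$ for every $\rho\in S_n$, so the functional is just the sum of coefficients. This does not rescue the argument, but it does indicate the weights you would need to track are not the ones you are tracking.)

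The paper circumvents this entirely by a different geometry. Rather than perturbing the generator in the group algebra, it builds a probabilistic interpolation: with $U^{2N}$ a Brownian motion on $\U_{2N}$ and $B^{2N}$ a block-diagonal pair of independent $\U_N$-Brownian motions, it sets $A^{2N}_s = U^{2N}_{t-s}B^{2N}_s$ and expresses $\nu^N_t(n)-\nu^{2N}_t(n) = \int_0^t \frac{d}{ds}\E\tr[(A^{2N}_s)^n]\,ds$. The $s$-derivative is computed via the SDEs for $(U^{2N})^{\otimes n}$ and $(B^{2N})^{\otimes n}$ in the tensor space $(\C^{2N})^{\otimes n}$, and produces quantities $H_{p,q}(s)$ that Lemma~\ref{l.Hpq.cov} identifies as a cancellative difference of covariances. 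Duhamel's formula \emph{is} used (comparing $e^{(t-s)\Phi_n}e^{s\Psi_n}$ with $e^{(t-s)\Phi_p}e^{s\Psi_p}$), but on the tensor space and in such a way that every term in the resulting integral can be \emph{re-read} as $\E\Tr$ of an explicit word in independent random unitary matrices and the two block-projections $P_1$, $P_2$; unitarity and $\|P_\ell\|\le1$ then give a bound of $O(1)$ or $O(N)$ per term with no exponential loss, and the $\binom{n}{2}$ transpositions contribute only a polynomial $p(n-p)$ factor. That probabilistic re-interpretation of the Duhamel integrand --- which your approach cannot supply --- is exactly where the polynomial bound comes from.
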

\noindent This is the main technical result of the first part of the paper, and its proof will occupy most of this section.  Let us first show how Theorem \ref{t.moment.rate.conv} follows from Proposition \ref{p.moment.rate.Cauchy}.

\begin{proof}[Proof of Theorem \ref{t.moment.rate.conv} assuming Proposition \ref{p.moment.rate.Cauchy}] Since $\lim_{N\to\infty} \nu^N_t(n) = \nu_t(n)$, we have the following convergent telescoping series:
\[ |\nu^N_t(n)-\nu_t(n)| = \left|\sum_{k=0}^\infty \left(\nu^{N2^k}_t(n)-\nu^{N2^{k+1}}_t(n)\right)\right| \le \sum_{k=0}^\infty \left|\nu^{N2^k}_t(n)-\nu^{N2^{k+1}}_t(n)\right|. \]
Now apply \eqref{e.moment.rate.conv.Cauchy} with $N$ replaced by $N2^k$, we find
\[ \left|\nu^{N2^k}_t(n)-\nu^{N2^{k+1}}_t(n)\right| \le \frac34\frac{t^2n^4}{(N2^k)^2} = \frac34\frac{1}{4^k}\frac{t^2n^4}{N^2}. \]
Summing the geometric series proves the theorem.  \end{proof}

\begin{remark} The bound \eqref{e.moment.rate.conv} on the speed of convergence $\nu^N_t(n)\to \nu_t(n)$ depends polynomially on $n$; this is crucial to the proof.  In \cite[Section 3.3]{Kemp2013a}, the author proved a bound of the form $K(t,n)/N^2$, where $K(t,n) \sim \frac{tn^2}{2}\exp(\frac{tn^2}{2})$.  This growth in $n$ is much too large to get control over test functions $f$ that are only in a Sobolev space, or even in $C^\infty(\U_1)$; the largest class of functions for which this Fourier series is summable is an ultra-analytic Gevrey class.  That blunter estimate was proved not only for $U^N$, however, but for a family of diffusions on $\GL_N$ including both $U^N$ and the Brownian motion on $\GL_N$.  It remains open whether a polynomial bound like \eqref{e.moment.rate.conv} holds for this wider class of diffusions.
\end{remark}

Hence, we turn to the proof of Proposition \ref{p.moment.rate.Cauchy}. Fix a Brownian motion $U^{2N}$ on $\U_{2N}$, along with two Brownian motions $U^{N,1},U^{N,2}$ on $\U_N$, so that the processes $U^{2N},U^{N,1},U^{N,2}$ are all independent.  For $t\ge 0$, let $B^{2N}_t\in\U_{2N}$ denote the block diagonal random matrix
\[ B^{2N}_t = \left[\begin{array}{cc} U^{N,1}_t & 0 \\ 0 & U^{N,2}_t \end{array}\right] \in \U_{2N}. \]
Let us introduce the notation 
\[ A^{2N}_s = U^{2N}_{t-s}B^{2N}_s \]
as this process will be used very often in what follows.

Now, using the notation of \eqref{e.F^N_n}, for any $n\in\N$ and any element $\sigma\in\C[S_n]$, we define
\begin{equation}  F(s,\sigma) = \E[F^{2N}_n(\sigma,A^{2N}_s)], \end{equation}
where for readability we hide the explicit dependence of $F(s,\sigma)$ on $N,n,t$.  Taking $s=0$, since $B^{2N}_0 = I_{2N}$, this gives $\E[F^{2N}_n(\sigma,U_t^{2N})]$, while taking $s=t$ yields $\E[F^{2N}_n(\sigma,B^{2N}_t)]$.  Now taking $\sigma=(1\,\cdots\,n)$ to be the full cycle (so that $N^{\#\sigma}=N$), from the definition of normalized trace we have
\[ F(0,(1\,\cdots\,n)) = \nu^{2N}_t(n), \qquad F(t,(1\,\cdots\,n)) = \nu^N_t(n). \]
Thus, the quantity we wish to estimate may be computed as
\begin{equation} \label{e.del.s.of.F} \nu^N_t(n) - \nu^{2N}_t(n) = \int_0^t \frac{d}{d s} F(s,(1\,\cdots\,n))\,ds \end{equation}
provided this derivative exists.  We now proceed to show that it does, and compute it.

Denote by $P_1,P_2\in\M_{2N}$ the projection matrices from $\C^{2N}=\C^N\oplus\C^N$ onto the two factors; to be precise, $P_1=\mathrm{diag}[1,\ldots,1,0,\ldots,0]$ and $P_2 = I_{2N}-P_1$.  For any $A,B\in\M_{2N}$ and $1\le i<j\le n$, denote by $(A\tensor B)_{i,j}$ the matrix
\begin{equation} \label{e.notation.i,j} (A\tensor B)_{i,j} = I^{\tensor i-1} \tensor A \tensor I^{\tensor j-i-1} \tensor B \tensor I^{\tensor n-j} \end{equation}
where $I=I_{2N}$ is the identity matrix in $\M_{2N}$. 

\begin{lemma} \label{l.G'(s)} Fix $t>0$ and $N,n\in\N$, and let $G\colon[0,t]\to\M_{2N}^{\tensor n}$ denote the function $G(s) = \E[(A^{2N}_s)^{\tensor n}]$.  Then $G\in C^1[0,t]$, and
\begin{equation} \label{e.G'} G'(s) = \frac{1}{2N} G(s)\sum_{1\le i<j\le n}(i\,j)\left[I-2(P_1\tensor P_1)_{i,j}-2(P_2\tensor P_2)_{i,j}\right] \end{equation}
where $(i\,j)$ denotes the Schur-Weyl representation of the transposition in $S_n$, cf.\  \eqref{e.Sn.action}. \end{lemma}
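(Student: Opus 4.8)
The plan is to differentiate $G(s) = \E[(A^{2N}_s)^{\tensor n}]$ using the It\^o stochastic calculus, treating the two ``halves'' of the process separately. Recall $A^{2N}_s = U^{2N}_{t-s}B^{2N}_s$; as $s$ increases, $U^{2N}_{t-s}$ runs \emph{backward} while $B^{2N}_s$ runs forward. First I would write down the It\^o SDEs for each factor: by \eqref{e.SDE.UN}, $B^{2N}_s$ solves $dB^{2N}_s = iB^{2N}_s\,dY^{2N}_s - \frac12 B^{2N}_s\,ds$ where $Y^{2N}$ is the block-diagonal $\mathrm{GUE}$-valued Brownian motion built from the two independent $\mathrm{GUE}^N$-valued Brownian motions driving $U^{N,1},U^{N,2}$. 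The key point is that the quadratic variation of $Y^{2N}$ differs from that of a genuine $\mathrm{GUE}^{2N}$-valued Brownian motion: the off-diagonal blocks are absent. Quantitatively, if $E^{2N}$ denotes the full $\mathrm{GUE}^{2N}$-valued Brownian motion, one has the It\^o product rule identity $d(A\,dE^{2N})\,(dE^{2N}\,B) = \tr(AB)\,I\,ds$ for the $\mathrm{GUE}^{2N}$ case, whereas for the block-diagonal $Y^{2N}$ the corresponding bracket is $P_1\tr(P_1 A B P_1)\cdots + P_2(\cdots)$, i.e.\ a ``pinched'' version. The identity $I = (P_1 + P_2)^{\tensor 2}$-type bookkeeping is exactly what produces the correction terms $-2(P_1\tensor P_1)_{i,j} - 2(P_2\tensor P_2)_{i,j}$ in \eqref{e.G'}.

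Next I would compute $d[(A^{2N}_s)^{\tensor n}]$ by the Leibniz/It\^o rule on the $n$-fold tensor power: the first-order (drift and martingale) terms in each tensor slot, plus the second-order It\^o cross terms between slots $i$ and $j$ for $1\le i<j\le n$. Taking expectations kills the martingale parts, and the drift contributions from the $-\frac12 A^{2N}_s\,ds$ terms in each of the $n$ factors combine with the $U^{2N}$-driven and $B^{2N}$-driven brackets. The crucial cancellation is between the It\^o bracket coming from $U^{2N}_{t-s}$ (a full $\mathrm{GUE}^{2N}$, contributing $\tr$-type brackets that would give $+\frac{1}{2N}(i\,j)$ per pair, with a sign from running backward) and the It\^o bracket coming from $B^{2N}_s$ (the pinched version). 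I expect the full-$\mathrm{GUE}$ pieces from $U^{2N}$ to exactly cancel the $-\frac12$ drifts as in the standard heat-equation computation, leaving precisely the \emph{difference} between the full bracket $\frac{1}{2N}(i\,j)I$ and the pinched bracket $\frac{1}{2N}(i\,j)[(P_1\tensor P_1)_{i,j} + (P_2\tensor P_2)_{i,j}]$ — but with a combinatorial factor of $2$ because the transposition $(i\,j)$ arises from two places (the $(i,j)$ cross term and the $(j,i)$ cross term) or equivalently because each cross bracket is counted with the pairing on both sides of the transposition. Careful sign-tracking of the backward time parameter $t-s$ for $U^{2N}$ is what flips one bracket relative to the other and makes the cancellation work. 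This bookkeeping — reconciling the two opposite-running brackets and getting the factor $2$ and the sign right — is the main obstacle.

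Finally, the $C^1$ regularity: since $A^{2N}_s$ is a bounded (unitary-valued) semimartingale with continuous bracket, $G(s)$ is an absolutely continuous function whose derivative is the (continuous) expectation of the integrand computed above; the integrand $s\mapsto \E[(A^{2N}_s)^{\tensor n}\sum_{i<j}(\cdots)]$ is continuous in $s$ because $s\mapsto A^{2N}_s$ is $L^2$-continuous and the operators $(i\,j), (P_k\tensor P_k)_{i,j}$ are fixed bounded matrices, so $G \in C^1[0,t]$. Rewriting the resulting sum as $\frac{1}{2N}G(s)\sum_{1\le i<j\le n}(i\,j)[I - 2(P_1\tensor P_1)_{i,j} - 2(P_2\tensor P_2)_{i,j}]$ gives \eqref{e.G'}. (One should double-check that $G(s)$ can be pulled out on the \emph{left} of the sum: this works because all increments of both $U^{2N}_{t-s}$ and $B^{2N}_s$ at time $s$ are, after the Markov/independent-increments reduction, independent of $A^{2N}_s$ and act by right multiplication, so the ``past'' factor $A^{2N}_s$ sits on the left — this is the one structural point to verify carefully before the computation.)
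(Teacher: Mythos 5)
Your high-level plan (apply It\^o's formula to the $n$-fold tensor power, collect drift terms and second-order brackets, take expectations) is in the right spirit, but the paper takes a cleaner route and your proposed cancellation mechanism is not what actually happens.

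\textbf{A different (and cleaner) decomposition.} Rather than trying to run It\^o calculus directly on the process $s \mapsto (A^{2N}_s)^{\otimes n}$ --- which is awkward because $U^{2N}_{t-s}$ runs backward, so increments in $s$ are \emph{not} independent of $A^{2N}_s$ and your parenthetical ``past factor sits on the left'' reduction does not go through --- the paper first uses independence of $U^{2N}$ and $B^{2N}$ to factor $G(s) = \E[(U^{2N}_{t-s})^{\otimes n}]\,\E[(B^{2N}_s)^{\otimes n}] = G_1(s)G_2(s)$. Each factor is then handled forward in time: $G_2'$ from the forward SDE of $B^{2N}$, and $G_1'$ by first writing the forward ODE for $\E[(U^{2N}_s)^{\otimes n}]$ and then substituting $s\mapsto t-s$ in the deterministic function (which just flips the sign of the derivative). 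The product rule $G' = G_1 G_2' + G_1' G_2$ then recombines everything, and independence together with the fact that $(i\,j)$ commutes with tensor powers lets one re-insert $G(s)$ on the left. The factorization sidesteps the backward-time It\^o issues entirely.

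\textbf{Two concrete errors in your cancellation bookkeeping.} First, you claim the full-$\mathrm{GUE}$ bracket from $U^{2N}$ cancels the $-\tfrac12$ drifts. It does not. What cancels is the pair of drift terms: the forward drift of $B^{2N}$ contributes $-\tfrac{n}{2}G_2(s)$ to $G_2'$, while the backward substitution $s\mapsto t-s$ flips the sign of the $U^{2N}$ drift, contributing $+\tfrac{n}{2}G_1(s)$ to $G_1'$; these $\pm\tfrac{n}{2}$ terms annihilate each other in the product rule. Both the ``full'' bracket (from $U^{2N}$) and the ``pinched'' bracket (from $B^{2N}$) survive and directly give the two pieces of \eqref{e.G'}: the $U$ bracket appears with a $+$ sign (because of the time reversal) and the $B$ bracket with a $-$ sign. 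Second, the factor of $2$ in $-2(P_\ell\otimes P_\ell)_{i,j}$ is not combinatorial. It comes from the normalization mismatch: $U^{2N}$ is driven by a $\mathrm{GUE}^{2N}$-valued Brownian motion with bracket scaling $\tfrac{1}{2N}$, while each block of $B^{2N}$ is driven by an $N\times N$ $\mathrm{GUE}^{N}$-valued Brownian motion with bracket scaling $\tfrac{1}{N}$. This is exactly why the paper's SDEs \eqref{e.SDE.tensor.n} and \eqref{e.SDE.tensor.n.2} carry coefficients $\tfrac{1}{N}$ and $\tfrac{1}{2N}$ respectively. Attributing the $2$ to $(i,j)$ versus $(j,i)$ cross-terms would apply equally to both brackets and therefore cannot explain their \emph{relative} factor.

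So while your proposal lands on the correct formula, it does so by back-fitting rather than by a sound mechanism; filling in the details along the lines you describe would run into the backward-time problem and the incorrect cancellation structure. The factorization via independence is the key idea you are missing.
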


\begin{proof} To begin, note that $U^{2N}$ and $B^{2N}$ are independent, and so it follows that
\begin{equation} \label{e.G.product.indep} G(s) = \E[(U^{2N}_{t-s})^{\tensor n}]\E[(B^{2N}_s)^{\tensor n}] \equiv G_1(s)G_2(s), \end{equation}
where both factors $G_1,G_2$ are continuous (since they are expectations of polynomials in diffusions).  Using the SDE \eqref{e.SDE.UN} and  applying It\^o's formula to the diffusion $B^{2N}$ shows that there is an $L^2$-martingale $(M^{2N}_s)_{s\ge 0}$ such that
\[ d\left((B^{2N}_s)^{\tensor n}\right) = dM_s - \frac{n}{2} (B^{2N}_s)^{\tensor n}\,dt -\frac1N \sum_{\substack{1\le i<j\le n\\ 1\le a,b\le N}}\sum_{\ell=1}^2 (B^{2N}_s)^{\tensor n}\cdot (E_{a+\ell N,b+\ell N}\tensor E_{b+\ell N,a+\ell N})_{i,j}\,ds \]
where $E_{c,d}\in\M_{2N}$ is the standard matrix unit (all $0$ entries except a $1$ in entry $(c,d)$) with indices written modulo $2N$.  This simplifies as follows: recalling that we identify an element $\sigma\in\C[S_n]$ with a matrix (in this case in $\M_{2N}$) via the faithful action \eqref{e.Sn.action}, we can write this SDE in the form
\begin{equation} \label{e.SDE.tensor.n} d\left((B^{2N}_s)^{\tensor n}\right) = dM_s - \frac{n}{2} (B^{2N}_s)^{\tensor n}\,ds -\frac1N \sum_{1\le i<j\le n} \sum_{\ell=1}^2 (B^{2N}_s)^{\tensor n}(i\,j)(P_\ell\tensor P_\ell)_{i,j}\,ds. \end{equation}
It follows that $G_2\in C^1[0,t]$, and
\begin{equation} \label{e.G2'} G_2'(s) = -\frac{n}{2}G_2(s)-\frac1N \sum_{1\le i<j\le n} \sum_{\ell=1}^2 \E[(B^{2N}_s)^{\tensor n}(i\,j)(P_\ell\tensor P_\ell)_{i,j}]. \end{equation}
At the same time, a similar calculation with It\^o's formula applied with \eqref{e.SDE.UN} shows that there is an $L^2$-martingale $(\widetilde{M}^{2N}_s)_{s\ge 0}$ such that
\begin{equation} \label{e.SDE.tensor.n.2} d((U^{2N}_s)^{\tensor n}) = d\widetilde{M}_s - \frac{n}{2}(U^{2N}_s)^{\tensor n}\,ds - \frac{1}{2N}\sum_{1\le i<j\le n} (U^{2N}_s)^{\tensor n}(i\,j)\,ds \end{equation}
which, changing $s\mapsto t-s$, implies that $G_1$ is $C^1[0,t]$ and
\begin{equation} \label{e.G1'} G_1'(s) = \frac{n}{2}G_1(s) + \frac{1}{2N}\sum_{1\le i<j\le n} \E[(U^N_{t-s})^{\tensor n}(i\,j)]. \end{equation}
Combining \eqref{e.G1'} and \eqref{e.G2'}, the product rule $G'(s) = G_1(s)G_2'(s) + G_2(s)G_1'(s)$ shows that $G\in C^1[0,t]$.  Using $G=G_1\cdot G_2$ again when recombining, we see that the $\frac{n}{2}$ terms cancel; moreover, the same recombination due to independence yields
\[ G'(s) = \frac{1}{2N}\sum_{1\le i<j\le n} \E[(U^N_{t-s})^{\tensor n}(i\,j) (B^N_s)^{\tensor n}] - \frac1N\sum_{1\le i<j\le n}\sum_{\ell=1}^2 \E[(A^{2N}_s)^{\tensor n}(i\,j)(P_\ell\tensor P_\ell)_{i,j}]. \]
Finally, in the first term, notice that $(i\,j)(B^N_s)^{\tensor n}=(B^N_s)^{\tensor n}(i\,j)$ (since the Schur-Weyl representation of any permutation commutes with any matrix of the form $B^{\tensor n}$). Hence, we have
\[ \E[(U^N_{t-s})^{\tensor n}(i\,j) (B^N_s)^{\tensor n}] = \E[(U^N_{t-s})^{\tensor n}]\E[(i\,j)(B^N_s)^{\tensor n}] = \E[(U^N_{t-s})^{\tensor n}]\E[(B^N_s)^{\tensor n}(i\,j)] = G(s)(i\,j). \]
Similarly factoring out the $G(s)$ from the second term yields the result.
\end{proof}

Now, note that
\begin{equation} \label{e.F(s,sigma).1} F(s,\sigma) = \E[F^{2N}_n(\sigma,A^{2N}_s)] = \frac{1}{(2N)^{\#\sigma}}\E\Tr[\sigma\cdot(A^{2N}_s)^{\tensor n}] = \frac{1}{(2N)^{\#\sigma}}\Tr[\sigma\cdot G(s)]. \end{equation}
This shows that $F(\,\cdot\,,\sigma)\in C^1[0,t]$, and so \eqref{e.del.s.of.F} is valid.  Further, we can use \eqref{e.G'} to compute the integrand there.  To that end, we introduce the following auxiliary functions: for $p,q\in\N$ and $s\in[0,t]$, denote
\begin{equation} \begin{aligned} \label{e.Hpq} H_{p,q}(s) &= \frac{1}{4N^2}\E[\Tr((A^{2N}_s)^p)\Tr((A^{2N}_s)^q)] \\
&\hspace{1in} -\frac{1}{2N^2}\sum_{\ell=1}^2 \E[\Tr((A^{2N}_s)^pP_\ell)\Tr((A^{2N}_s)^qP_\ell)]. \end{aligned} \end{equation}

\begin{lemma} For $0\le s\le t$,
\begin{equation} \label{e.F(s,sigma).2} \frac{d}{ds}F(s,(1\,\cdots\,n)) = \frac{n}{2}\sum_{p=1}^{n-1} H_{p,n-p}(s). \end{equation}
\end{lemma}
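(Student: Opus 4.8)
The plan is to start from the identity that is already available: taking $\sigma=(1\,\cdots\,n)$ in \eqref{e.F(s,sigma).1} (so $\#\sigma=1$ and $(2N)^{\#\sigma}=2N$), and differentiating the linear-in-$G$ expression using $G\in C^1[0,t]$ from Lemma \ref{l.G'(s)}, one gets
\[
\frac{d}{ds}F(s,(1\,\cdots\,n)) = \frac{1}{2N}\Tr\big[(1\,\cdots\,n)\cdot G'(s)\big].
\]
Substituting the formula \eqref{e.G'} for $G'(s)$ then yields
\[
\frac{d}{ds}F(s,(1\,\cdots\,n)) = \frac{1}{4N^2}\sum_{1\le i<j\le n}\Tr\Big[(1\,\cdots\,n)\, G(s)\, (i\,j)\big(I - 2(P_1\tensor P_1)_{i,j} - 2(P_2\tensor P_2)_{i,j}\big)\Big].
\]
Since $G(s)=\E[(A^{2N}_s)^{\tensor n}]$ and any matrix of the form $M^{\tensor n}$ commutes with the Schur--Weyl image of every permutation, I can move $(i\,j)$ to the left past $G(s)$ and pull the expectation outside the trace, so that each summand becomes $\E\Tr[\tau_{ij}\cdot(A^{2N}_s)^{\tensor n}\cdot Y]$ with $\tau_{ij}=(1\,\cdots\,n)(i\,j)$ and $Y\in\{I,(P_1\tensor P_1)_{i,j},(P_2\tensor P_2)_{i,j}\}$.

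The combinatorial heart of the argument is the cycle structure of $\tau_{ij}$: multiplying the full cycle by the transposition $(i\,j)$ with $i<j$ splits it into exactly two cycles, of lengths $q:=j-i$ and $p:=n-(j-i)$, and the points $i$ and $j$ lie in \emph{different} cycles. Applying the trace--cycle formula \eqref{e.trace.cycles} with every tensor slot filled by $A^{2N}_s$ — and, in the $Y=(P_\ell\tensor P_\ell)_{i,j}$ case, with an extra factor $P_\ell$ inserted into slots $i$ and $j$ — and invoking cyclicity of the trace so that the position of each inserted $P_\ell$ within its cyclic word is irrelevant, I obtain
\[
\E\Tr\big[\tau_{ij}(A^{2N}_s)^{\tensor n}\big]=\E\big[\Tr((A^{2N}_s)^p)\,\Tr((A^{2N}_s)^q)\big],
\]
\[
\E\Tr\big[\tau_{ij}(A^{2N}_s)^{\tensor n}(P_\ell\tensor P_\ell)_{i,j}\big]=\E\big[\Tr((A^{2N}_s)^p P_\ell)\,\Tr((A^{2N}_s)^q P_\ell)\big].
\]
The second identity is exactly where one uses that $i$ and $j$ sit in distinct cycles of $\tau_{ij}$: this forces one factor $P_\ell$ into each of the two cyclic traces. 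Comparing with \eqref{e.Hpq}, the $(i,j)$-summand of the sum above equals precisely $4N^2 H_{p,q}(s)$, and hence $\frac{d}{ds}F(s,(1\,\cdots\,n)) = \sum_{1\le i<j\le n}H_{\,n-(j-i),\,j-i}(s)$.

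It remains to reindex. For each $q\in\{1,\dots,n-1\}$ there are exactly $n-q$ pairs $(i,j)$ with $j-i=q$, so setting $p=n-q$ turns the sum into $\sum_{p=1}^{n-1} p\,H_{p,n-p}(s)$; then, since $H_{p,q}$ is manifestly symmetric in its two indices, substituting $p\leftrightarrow n-p$ and averaging the two expressions gives $\sum_{p=1}^{n-1}p\,H_{p,n-p}(s)=\tfrac{n}{2}\sum_{p=1}^{n-1}H_{p,n-p}(s)$, which is \eqref{e.F(s,sigma).2}. The only genuinely delicate point is the cycle bookkeeping in the middle step — pinning down the two cycle lengths and confirming that $i$ and $j$ end up in different cycles — together with tracking permutation-composition conventions; but all the attendant order ambiguities are absorbed by cyclicity of the trace and the symmetry of $H_{p,q}$, so they do not affect the final formula.
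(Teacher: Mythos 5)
Your proposal is correct and follows essentially the same path as the paper: differentiate $F(s,(1\cdots n))=\frac{1}{2N}\Tr[(1\cdots n)G(s)]$, substitute the formula for $G'(s)$, use the cycle decomposition of $(1\cdots n)(i\,j)$ into two cycles of lengths $j-i$ and $n-(j-i)$ (with $i$, $j$ in distinct cycles) to identify each $(i,j)$-summand with $H_{j-i,n-(j-i)}(s)$, and then reindex and symmetrize. The only cosmetic difference is that the paper writes the permutation as $(i\,j)(1\cdots n)$ and invokes cyclicity of the trace, whereas you commute $(i\,j)$ past $G(s)$; both give the same cycle type and lead to the same computation.
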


\begin{proof}  Applying \eqref{e.F(s,sigma).1} with $\sigma=(1\,\cdots\,n)$, and using \eqref{e.G'}, we have
\begin{align*} \frac{d}{ds}F(s,(1\,\cdots\,n)) &= \frac1{2N}\Tr[(1\,\cdots\,n)G'(s)] \\
&= \frac{1}{4N^2}\sum_{1\le i<j\le n}\left[\Tr[(1\,\cdots\,n)G(s)(i\,j)] - 2\sum_{\ell=1}^2 \Tr[(1\,\cdots\,n)G(s)(i\,j)(P_\ell\tensor P_\ell)_{i,j}\right]. \end{align*}
Using the trace property and noting that $(i\,j)(1\,\cdots\,n) = (1\,\cdots\,i-1\,j\,\cdots\,n)(i\,\cdots\,j-1)$, a simple calculation shows that
\[ \Tr[(1\,\cdots\,i-1\,j\,\cdots\,n)(i\,\cdots\,j-1)G(s)] = \E[\Tr((A^{2N}_s)^{j-i})\Tr((A^{2N}_s)^{n-(j-i)})]. \]
A similar calculation shows that
\[ \Tr[(1\,\cdots\,n)G(s)(i\,j)(P_\ell\tensor P_\ell)_{i,j}] = \E[\Tr((A^{2N}_s)^{j-i}P_\ell)\Tr((A^{2N}_s)^{n-(j-i)}P_\ell)]. \]
Thus, we have
\begin{align*} &\frac{d}{ds}F(s,(1\,\cdots\,n))\\
& = \frac{1}{4N^2}\sum_{1\le i<j\le n} \left[ \E[\Tr((A^{2N}_s)^{j-i})\Tr((A^{2N}_s)^{n-(j-i)})] - 2\E[\Tr((A^{2N}_s)^{j-i}P_\ell)\Tr((A^{2N}_s)^{n-(j-i)}P_\ell)]\right]. \end{align*}
Breaking this into two sums, each has the form $\sum_{1\le i<j\le n} h_{j-i,n-(j-i)}$ for some symmetric function $h\colon\{1,\ldots,n-1\}^2\to\C$.  For such a sum in general we have
\[ S\equiv \sum_{1\le i<j\le n} h_{j-i,n-(j-i)} = \sum_{p=1}^{n-1} \sum_{\substack{1\le i<j\le n\\ j-i=p}} h_{p,n-p} = \sum_{p=1}^{n-1} (n-p) h_{p,n-p} \]
since the number of $(i,j)$ with $1\le i<j\le n$ and $j-i=p$ is $(n-p)$.  Now using the symmetry and reindexing by $q=n-p$ we have
\[ 2S = \sum_{p=1}^{n-1} (n-p)h_{p,n-p} + \sum_{q=1}^{n-1} q h_{n-q,q} = \sum_{p=1}^{n-1} (n-p)h_{p,n-p} + \sum_{p=1}^{n-1} ph_{p,n-p} = n\sum_{p=1}^{n-1} h_{p,n-p}. \]
Applying this with the above summations yields the result.
\end{proof}

From \eqref{e.F(s,sigma).2} and \eqref{e.del.s.of.F}, we therefore have
\begin{equation} \label{e.nu.intermsof.Hpq} \nu_t^N(n)-\nu_t^{2N}(n) = \frac{n}{2}\sum_{p=1}^{n-1} \int_0^t H_{p,n-p}(s)\,ds. \end{equation}
It now behooves us to estimate the terms $H_{p,n-p}$, cf.\ \eqref{e.Hpq}.  Since the result (Proposition \ref{p.moment.rate.Cauchy}) gives an $O(1/N^2)$-estimate, we must show that $H_{p,q}(s) = O(1/N^2)$.  Note, however, that \eqref{e.Hpq} involves expectations of unnormalized traces of powers of $A^{2N}_s$.  As this process possesses a limit noncommutative distribution in terms of normalized traces, the leading order contribution of the first term in $H_{p,q}$ is $O(1)$.  In fact, there are cancellations between the two terms: $H_{p,q}(s)$ is actually a difference of covariances.

\begin{lemma} \label{l.Hpq.cov} For $s\ge 0$ and $p,q\in\N$,
\begin{equation} \label{e.Hpq.cov} N^2 H_{p,q}(s) = \frac{1}{4}\mathrm{Cov}[\Tr((A^{2N}_s)^p),\Tr((A^{2N}_s)^q)] -\mathrm{Cov}[\Tr((A^{2N}_s)^pP_1),\Tr((A^{2N}_s)^qP_1)]. \end{equation}
\end{lemma}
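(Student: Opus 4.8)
The plan is to reduce the identity to one symmetry property of the matrix $A^{2N}_s$ — invariance in law under conjugation by the block-swap unitary — and then to expand both sides using $\Cov[X,Y]=\E[XY]-\E[X]\E[Y]$. The point is that the first term of \eqref{e.Hpq} already contains $\tfrac14\Cov[\Tr((A^{2N}_s)^p),\Tr((A^{2N}_s)^q)]$ up to the correction $\tfrac1{4N^2}\E[\Tr((A^{2N}_s)^p)]\E[\Tr((A^{2N}_s)^q)]$, and similarly each $P_\ell$-term differs from a covariance by a product of means; the content of the lemma is precisely that these ``product of means'' corrections cancel, and this cancellation is what the symmetry delivers.

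First I would record the symmetry. Let $J=\left[\begin{array}{cc} 0 & I_N\\ I_N & 0\end{array}\right]\in\U_{2N}$, so that $J=J^\ast=J^{-1}$ and $JP_1J=P_2$, $JP_2J=P_1$. Since the metric $\langle\cdot,\cdot\rangle_{2N}$ on $\u_{2N}$ is $\mathrm{Ad}$-invariant, $\Delta_{\U_{2N}}$ is invariant under the conjugation $U\mapsto JUJ$, hence $(JU^{2N}_rJ)_{r\ge 0}$ is again a Brownian motion on $\U_{2N}$; meanwhile $JB^{2N}_sJ=\mathrm{diag}(U^{N,2}_s,U^{N,1}_s)$ has the same law as $B^{2N}_s=\mathrm{diag}(U^{N,1}_s,U^{N,2}_s)$ because $U^{N,1},U^{N,2}$ are i.i.d., and conjugation by $J$ preserves the independence of the two factors. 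Therefore $JA^{2N}_sJ=(JU^{2N}_{t-s}J)(JB^{2N}_sJ)$ has the same distribution as $A^{2N}_s$. Since $\Tr((JA^{2N}_sJ)^k)=\Tr((A^{2N}_s)^k)$ and $\Tr((JA^{2N}_sJ)^kP_1)=\Tr((A^{2N}_s)^kJP_1J)=\Tr((A^{2N}_s)^kP_2)$ for every $k$, applying the map $M\mapsto(\Tr(M^pP_1),\Tr(M^qP_1))$ to both sides shows that $(\Tr((A^{2N}_s)^pP_2),\Tr((A^{2N}_s)^qP_2))$ and $(\Tr((A^{2N}_s)^pP_1),\Tr((A^{2N}_s)^qP_1))$ have the same joint distribution. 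Using $P_1+P_2=I_{2N}$, this gives $\E[\Tr((A^{2N}_s)^pP_\ell)]=\tfrac12\E[\Tr((A^{2N}_s)^p)]$ for $\ell=1,2$ and $\Cov[\Tr((A^{2N}_s)^pP_1),\Tr((A^{2N}_s)^qP_1)]=\Cov[\Tr((A^{2N}_s)^pP_2),\Tr((A^{2N}_s)^qP_2)]$.

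Finally I would substitute. Write $A=A^{2N}_s$, $m_p=\E[\Tr(A^p)]$, $C=\Cov[\Tr(A^p),\Tr(A^q)]$, $D=\Cov[\Tr(A^pP_1),\Tr(A^qP_1)]$. Expanding each expectation in \eqref{e.Hpq} via $\E[XY]=\Cov[X,Y]+\E[X]\E[Y]$, the first term of $4N^2H_{p,q}(s)$ equals $C+m_pm_q$, while by the symmetry above each of the two summands in the second term equals $D+\tfrac14 m_pm_q$, so their sum is $2D+\tfrac12 m_pm_q$. Hence
\[ 4N^2H_{p,q}(s)=(C+m_pm_q)-2\big(2D+\tfrac12 m_pm_q\big)=C-4D, \]
which is exactly \eqref{e.Hpq.cov}. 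The only real content is the block-swap symmetry; the analytic work (the It\^o computations) is already done in Lemma \ref{l.G'(s)}, and the one thing to watch is that without this symmetry $\E[\Tr(A^pP_1)]$ need not equal $\tfrac12\E[\Tr(A^p)]$, so the cancellation of the mean-product terms genuinely relies on it.
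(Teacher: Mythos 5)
Your proof is correct and follows essentially the same route as the paper: exploit a block-swap symmetry to show the $P_1$ and $P_2$ terms agree and that $\E[\Tr(A^pP_\ell)]=\tfrac12\E[\Tr(A^p)]$, then expand $\E[XY]=\Cov[X,Y]+\E[X]\E[Y]$ and watch the product-of-means terms cancel. If anything your treatment is slightly more careful than the paper's: the paper invokes a ``block rotation'' $R$ with $RP_1R^\ast=P_2$ and asserts invariance of the law of $A^{2N}_s$ under ``rotations,'' whereas you explicitly exhibit $J$, verify that conjugation by $J$ preserves the law of both $U^{2N}$ (via $\mathrm{Ad}$-invariance of the metric) and $B^{2N}$ (via the i.i.d.\ structure of its blocks), and deduce the joint distributional equality needed for the covariance identity.
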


\begin{proof} From \eqref{e.Hpq}, $N^2 H_{p,q}(s)$ is a difference of the two terms.  The first is
\begin{equation} \label{e.cov->1} \begin{aligned} &\frac14\E[\Tr((A^{2N}_s)^p)\Tr((A^{2N}_s)^q)] \\
& \qquad\qquad = \frac14 \mathrm{Cov}[\Tr((A^{2N}_s)^p),\Tr((A^{2N}_s)^q)] + \frac14 \E[\Tr((A^{2N}_s)^p)]\E[\Tr((A^{2N}_s)^q)]. \end{aligned} \end{equation}
The second term is a sum
\[ -\frac12\sum_{\ell=1}^2 \E[\Tr((A^{2N}_s)^pP_\ell)\Tr((A^{2N}_s)^qP_\ell)]. \]
Let $R$ be the block rotation matrix of $\C^{2N}$ by $\frac{\pi}{2}$ in each factor of $\C^N$, so that $RP_1R^\ast = P_2$.  Since the distribution of $A^{2N}_s$ is invariant under rotations, it follows that
\[ \E[\Tr((A^{2N}_s)^pP_\ell)] \quad \text{and} \quad \E[\Tr((A^{2N}_s)^pP_\ell)\Tr((A^{2N}_s)^qP_\ell)] \]
do not depend on $\ell$ (as each is a conjugation-invariant polynomial function in $A_s^{2N}$).  In particular, the two terms in the $\ell$-sum are equal, and so the second term in $H_{p,q}(s)$ is
\begin{equation} \label{e.cov->2} \begin{aligned} & -\E[\Tr((A^{2N}_s)^pP_\ell)\Tr((A^{2N}_s)^qP_\ell)] \\
 & \qquad\qquad = -\mathrm{Cov}[\Tr((A^{2N}_s)^pP_1),\Tr((A^{2N}_s)^qP_1)] - \E[\Tr((A^{2N}_s)^pP_1)]\E[\Tr((A^{2N}_s)^qP_1)]. \end{aligned} \end{equation}
Moreover, since $\E[\Tr((A^{2N}_s)^pP_1)] = \E[\Tr((A^{2N}_s)^pP_2)]$ and $P_1+P_2 = I$, we have $\E[\Tr((A^{2N}_s)^pP_1)] = \frac12\E[\Tr((A^{2N}_s)^p)]$.  Thus, the last term in \eqref{e.cov->2} is $-\frac14\E[\Tr((A^{2N}_s)^p)]\E[\Tr((A^{2N}_s)^q)]$.  Combining this with \eqref{e.cov->1} then yields the result.  \end{proof}
Therefore, we are left to estimate these two covariance terms.  We do so by expanding them in terms of the Schur-Weyl representation.  Let $\gamma_n\in S_n$ be the full cycle and let $\gamma_{p,q}$ denote the double-cycle in $S_{p+q}$:
\[ \gamma_n = (1\,\cdots\,n), \qquad \gamma_{p,q} = (1\,\cdots\,p)(p+1\,\cdots\,p+q)\in S_{p+q}. \]
Then, for any matrix $A\in\M_{2N}$, \eqref{e.trace.cycles} gives
\[ \Tr(A^n) = \Tr[A^{\tensor n}\cdot \gamma_n], \qquad \Tr(A^p)\Tr(A^q) = \Tr[A^{\tensor p}\tensor A^{\tensor q}\cdot \gamma_{p,q}]. \]
It follows that
\begin{align}\nonumber &\mathrm{Cov}[\Tr((A^{2N}_s)^p),\Tr((A^{2N}_s)^q)] \\
\nonumber & \qquad\qquad = \E[\Tr((A^{2N}_s)^p)\Tr((A^{2N}_s)^q)]-\E[\Tr((A^{2N}_s)^p)]\E[\Tr((A^{2N}_s)^q)] \\
\nonumber & \qquad\qquad =\E\Tr[(A_s^{2N})^{\tensor p}\tensor (A_s^{2N})^{\tensor q}\cdot \gamma_{p,q}] - \E\Tr[(A_s^{2N})^{\tensor p}\cdot\gamma_p]\E\Tr[(A_s^{2N})^{\tensor q}\cdot\gamma_q] \\
\nonumber & \qquad\qquad = \Tr[\E((A_s^{2N})^{\tensor (p+q)})\cdot\gamma_{p,q}] - \Tr[\E((A_s^{2N})^{\tensor p})\tensor \E((A_s^{2N})^{\tensor p})\cdot \gamma_{p,q}] \\
\label{e.cov1.expansion}  & \qquad\qquad = \Tr\left(\left[\E((A_s^{2N})^{\tensor (p+q)})-\E((A_s^{2N})^{\tensor p})\tensor \E((A_s^{2N})^{\tensor p})\right]\cdot \gamma_{p,q}\right).
\end{align}
At the same time, using the fact that the projection $P_1$ are diagonal, we have for any matrix $A\in\M_{2N}$
\[ \Tr(A^pP_1)\Tr(A^qP_1) = \Tr[A^{\tensor p}\tensor A^{\tensor q}\cdot (P_1\tensor P_1)_{p,p+q}\cdot \gamma_{p,q}], \]
where we remind the reader that $(P_\ell\tensor P_\ell)_{i,j}$ references notation \eqref{e.notation.i,j}.  A similar calculation to the one above confirms that
\begin{align} \nonumber & \mathrm{Cov} [\Tr((A^{2N}_s)^pP_1),\Tr((A^{2N}_s)^qP_1)] \\
\label{e.cov2.expansion} = & \Tr\left(\left[\E((A_s^{2N})^{\tensor (p+q)})-\E((A_s^{2N})^{\tensor p})\tensor \E((A_s^{2N})^{\tensor p})\right]\cdot (P_1\tensor P_1)_{p,p+q}\cdot \gamma_{p,q}\right).
\end{align}
Thus, from \eqref{e.Hpq.cov}, \eqref{e.cov1.expansion}, and \eqref{e.cov2.expansion}, to estimate $H_{p,n-p}(s)$ we must understand the tensor
\begin{equation} \label{e.precov} \E((A_s^{2N})^{\tensor n})-\E((A_s^{2N})^{\tensor p})\tensor \E((A_s^{2N})^{\tensor (n-p)}). \end{equation}
To that end, we introduce some notation.  For $1\le i<j\le n$, define the linear operator $T_{i,j}$ on $(\C^{2N})^{\tensor n}$ by
\begin{equation} \label{e.Tij} T_{i,j} = 2(i\,j)\left[P_1\tensor P_1 + P_2\tensor P_2\right]_{i,j}. \end{equation}
Additionally, for $1\le p\le n$, we introduce the linear operators $\Phi_p$ and $\Psi_p$ as follows:
\begin{equation} \label{e.notation.PhiPsi} \Phi_p = -\frac{1}{2N}\sum_{\substack{ i\le i<j\le p,\text{ or}\\ p<i<j\le n}}(i\,j), \quad \Psi_p = -\frac{1}{2N}\sum_{\substack{ i\le i<j\le p,\text{ or}\\ p<i<j\le n}} T_{i,j}, \end{equation}
with the understanding that, when $p=n$, the sum is simply over $1\le i<j\le n$.

\begin{lemma} Let $p\in\{1,\ldots,n\}$, and let $0\le s\le t$.  Then
\begin{align} \label{e.Phi.meaning} \E[(U_s^{2N})^{\tensor p}]\tensor\E[(U_s^{2N})^{\tensor (n-p)}] &= e^{-\frac{ns}{2}}e^{s\Phi_p}, \\
\label{e.Psi.meaning} \E[(B_s^{2N})^{\tensor p}]\tensor\E[(B_s^{2N})^{\tensor (n-p)}] &= e^{-\frac{ns}{2}}e^{s\Psi_p}, \\
\label{e.spadediamond} \E[(A_s^{2N})^{\tensor p}]\tensor \E[(A_s^{2N})^{\tensor (n-p)}] &= e^{-\frac{nt}{2}} e^{(t-s)\Phi_p}e^{s\Psi_p}, \end{align}
where, in the case $p=n$, we interpret the $0$-fold tensor product as the identity as usual.
\end{lemma}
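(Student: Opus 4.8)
The plan is to obtain all three identities from the operator-valued linear ODEs implicit in the proof of Lemma~\ref{l.G'(s)}, together with the elementary identity $\exp(A)\tensor\exp(B)=\exp(A\tensor I+I\tensor B)$, which is valid because $A\tensor I$ and $I\tensor B$ commute when $A,B$ act on complementary tensor factors.

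First I would record the single-block formulas. Running the same It\^o computation that produced \eqref{e.SDE.tensor.n.2} with $m$ tensor factors and taking expectations (the martingale term has zero mean) shows that $G(s)=\E[(U^{2N}_s)^{\tensor m}]$ solves $G'(s)=G(s)\bigl(-\tfrac m2 I-\tfrac1{2N}\sum_{1\le i<j\le m}(i\,j)\bigr)$ with $G(0)=I$, hence
\[ \E[(U^{2N}_s)^{\tensor m}]=e^{-\frac{ms}{2}}\exp\Bigl(-\tfrac{s}{2N}\sum_{1\le i<j\le m}(i\,j)\Bigr). \]
Likewise, taking expectations in \eqref{e.SDE.tensor.n} and using $\sum_{\ell=1}^2(i\,j)(P_\ell\tensor P_\ell)_{i,j}=\tfrac12 T_{i,j}$ with $T_{i,j}$ as in \eqref{e.Tij} gives
\[ \E[(B^{2N}_s)^{\tensor m}]=e^{-\frac{ms}{2}}\exp\Bigl(-\tfrac{s}{2N}\sum_{1\le i<j\le m}T_{i,j}\Bigr). \]
With $m=n$ these are exactly the $p=n$ cases of \eqref{e.Phi.meaning} and \eqref{e.Psi.meaning}, since then the sum in \eqref{e.notation.PhiPsi} runs over all $1\le i<j\le n$; the remaining cases use $m=p$ and $m=n-p$.

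Next I would take tensor products. Writing the $p$-fold formula as an exponential on the first $p$ factors of $(\C^{2N})^{\tensor n}$ and the $(n-p)$-fold formula as an exponential on the last $n-p$ factors, the scalar prefactors multiply to $e^{-ns/2}$ and the operator parts combine by $\exp(A)\tensor\exp(B)=\exp(A\tensor I+I\tensor B)$. The bookkeeping point is that a transposition $(i\,j)$ with $1\le i<j\le p$, extended by the identity on factors $p+1,\dots,n$, is the transposition $(i\,j)\in S_n$, while a transposition on the last $n-p$ factors is, after the obvious reindexing, a transposition $(i\,j)$ with $p<i<j\le n$; therefore $A\tensor I+I\tensor B=-\tfrac{s}{2N}\sum_{1\le i<j\le p\text{ or }p<i<j\le n}(i\,j)=s\Phi_p$, which proves \eqref{e.Phi.meaning}. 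Repeating the argument verbatim with $T_{i,j}$ in place of $(i\,j)$ proves \eqref{e.Psi.meaning}.

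Finally, for \eqref{e.spadediamond}, since $(CD)^{\tensor m}=C^{\tensor m}D^{\tensor m}$ and $U^{2N}$ is independent of $B^{2N}$, one has $\E[(A^{2N}_s)^{\tensor m}]=\E[(U^{2N}_{t-s})^{\tensor m}]\,\E[(B^{2N}_s)^{\tensor m}]$; substituting the two single-block formulas (with parameters $t-s$ and $s$) gives $\E[(A^{2N}_s)^{\tensor m}]=e^{-\frac{mt}{2}}C_m^{(s)}D_m^{(s)}$ where $C_m^{(s)}=\exp(-\tfrac{t-s}{2N}\sum_{1\le i<j\le m}(i\,j))$ and $D_m^{(s)}=\exp(-\tfrac{s}{2N}\sum_{1\le i<j\le m}T_{i,j})$. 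Taking the tensor product over $m=p$ and $m=n-p$ and using $(XZ)\tensor(YW)=(X\tensor Y)(Z\tensor W)$ to group the two exponential factors separately, the first group becomes $e^{(t-s)\Phi_p}$ and the second $e^{s\Psi_p}$ by the computation already performed, giving \eqref{e.spadediamond}. I do not expect a genuine obstacle here: the only thing needing care is the identification $(\C^{2N})^{\tensor p}\tensor(\C^{2N})^{\tensor(n-p)}=(\C^{2N})^{\tensor n}$ and the accompanying relabeling of the transpositions and of the operators $T_{i,j}$, together with noting that $A\tensor I$ and $I\tensor B$ commute so that the exponential-of-a-sum identity legitimately applies.
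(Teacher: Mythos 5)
Your proof is correct and follows essentially the same approach as the paper: take expectations in the It\^o SDEs to get matrix ODEs, solve them to express the single-block moments as exponentials, and combine using the commutativity of operators acting on disjoint tensor factors. The only cosmetic differences are that you invoke $\exp(A)\tensor\exp(B)=\exp(A\tensor I + I\tensor B)$ directly where the paper writes $X\tensor Y=(X\tensor I)(I\tensor Y)$ and then notes the two exponents commute, and for the third identity you factor each $\E[(A^{2N}_s)^{\tensor m}]$ into its $U$- and $B$-parts before tensoring while the paper tensors first and then factors -- these rearrangements are equivalent.
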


\begin{proof} Returning to the SDEs \eqref{e.SDE.tensor.n} and \eqref{e.SDE.tensor.n.2}, taking expectations we find that
\begin{align} \label{e.EODE2} \frac{d}{ds}\E[(U_s^{2N})^{\tensor n}] &= -\frac{n}{2}\E[(U_s^{2N})^{\tensor n}] - \frac{1}{2N}\E\left[(U_s^{2N})^{\tensor n}\cdot\sum_{1\le i<j\le n} (i\,j)\right], \\
\label{e.EODE1} \frac{d}{ds}\E[(B_s^{2N})^{\tensor n}] &= -\frac{n}{2}\E[(B_s^{2N})^{\tensor n}] - \frac{1}{2N}\E\left[(B_s^{2N})^{\tensor n}\cdot\sum_{1\le i<j\le n} T_{i,j}\right].
 \end{align}
Since the processes $B^{2N}$ and $U^{2N}$ start at the identity, it is then immediate to verify that the solutions to these ODEs are
\[ \E[(U_s^{2N})^{\tensor n}] = e^{-\frac{ns}{2}}\exp\left\{-\frac{s}{2N}\sum_{1\le i<j\le n}(i\,j)\right\}, \quad \text{and} \quad
\E[(B_s^{2N})^{\tensor n}] = e^{-\frac{ns}{2}}\exp\left\{-\frac{s}{2N}\sum_{1\le i<j\le n}T_{i,j}\right\}. \] 
Now, for the tensor product, we decompose
\[ \E[(U_s^{2N})^{\tensor p}]\tensor\E[(U_s^{2N})^{\tensor (n-p)}] = \left(\E[(U_s^{2N})^{\tensor p}]\tensor I^{\tensor (n-p)}\right)\cdot\left(I^{\tensor p}\tensor \E[(U_s^{2N})^{\tensor (n-p)}]\right). \]
We can express these expectations as in \eqref{e.EODE2}, provided we note that $(i\,j)$ now refers (alternatively) to action of $S_p$ and $S_{n-p}$ on $\M_{2N}^{\tensor n} = \M_{2N}^{\tensor p}\tensor \M_{2N}^{\tensor (n-p)}$, either trivially in the first factor or the second.  The result is that
\begin{align*} \E[(U_s^{2N})^{\tensor p}]\tensor I^{\tensor (n-p)} &= e^{-\frac{ps}{2}}\exp\left\{-\frac{s}{2N}\sum_{1\le i<j\le p} (i\,j)\right\} \\
I^{\tensor p}\tensor \E[(U_s^{2N})^{\tensor (n-p)}] &= e^{-\frac{(n-p)s}{2}}\exp\left\{-\frac{s}{2N}\sum_{p<i'<j'\le n} (i'\,j')\right\}.
\end{align*}
Note that all the $(i\,j)$ terms in the first sum commute with all the $(i'\,j')$ terms in the second sum (since $i<j<i'<j'$), and so, taking the product, we can combine to yield
\[ \E[(U_s^{2N})^{\tensor p}]\tensor\E[(U_s^{2N})^{\tensor (n-p)}] =e^{-\frac{ns}{2}}\exp\left\{-\frac{s}{2N}\sum_{\substack{1\le i<j\le p,\text{ or}\\ p<i<j\le n}} (i\,j)\right\} = e^{-\frac{ns}{2}}e^{s\Phi_p}, \]
verifying \eqref{e.Phi.meaning}.  An entirely analogous analysis proves \eqref{e.Psi.meaning}.  

Finally, using independence as in \eqref{e.G.product.indep} to factor

\begin{equation} \label{e.factor.4} \begin{aligned} &\E[(A_s^{2N})^{\tensor p}]\tensor \E[(A_s^{2N})^{\tensor (n-p)}] \\
& \hspace{1in}  = \left(\E[(U_{t-s}^{2N})^{\tensor p}]\tensor \E[(U_{t-s}^{2N})^{\tensor (n-p)}]\right)\cdot\left(\E[(B_s^{2N})^{\tensor p}]\tensor \E[(B_s^{2N})^{\tensor (n-p)}] \right) \end{aligned} \end{equation}
and substituting $s\mapsto t-s$ in \eqref{e.Phi.meaning}, \eqref{e.spadediamond} follows from \eqref{e.Psi.meaning} and \eqref{e.factor.4}.
\end{proof}

Hence, the desired quantity \eqref{e.precov} is computed, via \eqref{e.spadediamond}, as
\begin{equation} \label{e.precov.2} \E((A_s^{2N})^{\tensor n})-\E((A_s^{2N})^{\tensor p})\tensor \E((A_s^{2N})^{\tensor (n-p)}) = e^{-\frac{nt}{2}}\left[e^{(t-s)\Phi_n}e^{s\Psi_n}-e^{(t-s)\Phi_p}e^{s\Psi_p}\right]. \end{equation}
To compute this, we recall Duhamel's formula: for complex matrices $C,D$,
\[ e^C-e^D = \int_0^1 e^{(1-u)C}(C-D)e^{uD}\,du. \]
To apply this to \eqref{e.precov.2}, we add and subtract $e^{(t-s)\Phi_p}e^{s\Psi_n}$. Duhamel's formula then expresses this difference as follows:
\begin{align*} & e^{(t-s)\Phi_n}e^{s\Psi_n}-e^{(t-s)\Phi_p}e^{s\Psi_p} \\
=& \Big[e^{(t-s)\Phi_n}-e^{(t-s)\Phi_p}\Big]e^{s\Psi_n} + e^{(t-s)\Phi_p}\Big[e^{s\Psi_n}-e^{s\Psi_p}\Big] \\
=& \int_0^1 e^{(1-u)(t-s)\Phi_n}(t-s)(\Phi_n-\Phi_p)e^{u(t-s)\Phi_p}\,du\cdot e^{s\Psi_n} + e^{(t-s)\Phi_p}\int_0^1 e^{(1-u)s\Psi_n}s(\Psi_n-\Psi_p)e^{us\Psi_p}\,du \\
=& \int_0^{t-s} e^{(t-s-u)\Phi_n}(\Phi_n-\Phi_p) e^{u\Phi_p}e^{s\Psi_n}\,du + \int_0^s e^{(t-s)\Phi_p}e^{(s-u)\Psi_n}(\Psi_n-\Psi_p)e^{u\Psi_p}\,du
\end{align*}
where we have made the substitution $u\mapsto (t-s)u$ in the first integral and $u\mapsto su$ in the second.  Now, from \eqref{e.notation.PhiPsi},
\[ \Phi_p-\Phi_n = \frac{1}{2N}\sum_{1\le i\le p<j\le n} (i\,j), \quad \text{and} \quad \Psi_p-\Psi_n = \frac{1}{2N}\sum_{1\le i\le p<j\le n} T_{i,j}. \]
Hence, \eqref{e.precov.2} yields
\begin{equation} \begin{aligned} &\E((A_s^{2N})^{\tensor n})-\E((A_s^{2N})^{\tensor p})\tensor \E((A_s^{2N})^{\tensor (n-p)}) \\
= &\frac{e^{-\frac{nt}{2}}}{2N} \sum_{1\le i\le p<j\le n} \left( \int_0^{t-s} e^{(t-s-u)\Phi_n}(i\,j)e^{u\Phi_p}e^{s\Psi_n}\,du + \int_0^s e^{(t-s)\Phi_p}e^{(s-u)\Psi_n}T_{i,j}e^{u\Psi_p}\,du\right). \end{aligned} \end{equation}
We now reinterpret the exponentials in the integrals as expectations of the processes $U^{2N}$ and $B^{2N}$, using \eqref{e.Phi.meaning} and \eqref{e.Psi.meaning}.  The first integrand is
\begin{align*} e^{-\frac{nt}{2}} e^{(t-s-u)\Phi_n}(i\,j)e^{u\Phi_p}e^{s\Psi_n} 
= \E[(U^{2N}_{t-s-u})^{\tensor n}]\cdot (i\,j)\cdot \E[(U^{2N}_{u})^{\tensor p}]\tensor\E[(U^{2N}_{u})^{\tensor (n-p)}]\cdot \E[(B^{2N}_s)^{\tensor n}].
\end{align*}
By definition $U^{2N}$ and $B^{2N}$ are independent. Let us introduce two more copies $V^{2N},W^{2N}$ of $U^{2N}$ so the $\{U^{2N},V^{2N},W^{2N},B^{2N}\}$ are all independent.  Then this product may be expressed as the expectation of 
\begin{equation} \label{e.def.R} R^p_{i,j}(u;s,t)=(U^{2N}_{t-s-u})^{\tensor n}\cdot (i\,j)\cdot (V_u^{2N})^{\tensor p}\tensor (W_u^{2N})^{\tensor(n-p)}\cdot (B_s^{2N})^{\tensor n}. \end{equation}
Similarly, if we introduce independent copies $C^{2N}$ and $D^{2N}$ of $B^{2N}$ so that all the the processes $V^{2N}$, $W^{2N}$, $B^{2N}$, $C^{2N}$, $D^{2N}$ are independent, the second integrand can be expressed as the expectation of
\begin{equation} \label{e.def.Q} Q^p_{i,j}(u;s,t)=(V^{2N}_{t-s})^{\tensor p}\tensor (W^{2N}_{t-s})^{\tensor (n-p)}\cdot (B_{s-u}^{2N})^{\tensor n} \cdot T_{i,j}\cdot (C^{2N}_u)^{\tensor p}\tensor (D_u^{2N})^{\tensor (n-p)}. \end{equation}

To summarize, we have computed the following.

\begin{lemma} Let $0\le s\le t$ and $p\in\{1,\ldots,n\}$.  Then
\begin{equation} \label{e.precov.3}  \begin{aligned} &\E((A_s^{2N})^{\tensor n})-\E((A_s^{2N})^{\tensor p})\tensor \E((A_s^{2N})^{\tensor (n-p)}) \\
& \qquad\qquad = \frac{1}{2N} \sum_{1\le i\le p<j\le n} \left( \int_0^{t-s} \E[R^p_{i,j}(u;t,s)]\,du + \int_0^s \E[Q^p_{i,j}(u;t,s)]\,du\right). \end{aligned} \end{equation} \end{lemma}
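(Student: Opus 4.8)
The plan is simply to assemble the identity \eqref{e.precov.3} from the computations already recorded in the preceding lemmas; the ``proof'' is bookkeeping rather than new analysis. I would start from \eqref{e.precov.2}, which rewrites the left-hand side as $e^{-nt/2}\big[e^{(t-s)\Phi_n}e^{s\Psi_n}-e^{(t-s)\Phi_p}e^{s\Psi_p}\big]$. The only analytic tool needed is Duhamel's formula: inserting and subtracting the intermediate term $e^{(t-s)\Phi_p}e^{s\Psi_n}$ and applying Duhamel to each of the two resulting differences splits the expression into two integrals, one with $\Phi_n-\Phi_p$ sandwiched between $e^{(t-s-u)\Phi_n}$ and $e^{u\Phi_p}$ and followed by $e^{s\Psi_n}$, the other with $\Psi_n-\Psi_p$ sandwiched between $e^{(s-u)\Psi_n}$ and $e^{u\Psi_p}$ and preceded by $e^{(t-s)\Phi_p}$, after the rescalings $u\mapsto(t-s)u$ and $u\mapsto su$ of the integration variable.

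Next I would substitute the explicit expressions $\Phi_p-\Phi_n=\frac{1}{2N}\sum_{1\le i\le p<j\le n}(i\,j)$ and $\Psi_p-\Psi_n=\frac{1}{2N}\sum_{1\le i\le p<j\le n}T_{i,j}$, which come directly from \eqref{e.notation.PhiPsi}; this produces both the overall factor $\frac{1}{2N}$ and the double sum $\sum_{1\le i\le p<j\le n}$ appearing in \eqref{e.precov.3}. Then, using the decomposition of $(\C^{2N})^{\tensor n}$ into its first $p$ and last $n-p$ tensor legs, each remaining exponential factor is reinterpreted as a tensor power of $\E[(U^{2N}_r)^{\tensor\bullet}]$ or $\E[(B^{2N}_r)^{\tensor\bullet}]$ via \eqref{e.Phi.meaning} and \eqref{e.Psi.meaning}, with the scalar $e^{-nt/2}$ absorbed into those expectations.

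Finally I would introduce the mutually independent auxiliary copies $V^{2N},W^{2N}$ of $U^{2N}$ and $C^{2N},D^{2N}$ of $B^{2N}$, all independent of $U^{2N},B^{2N}$. Because the transposition operators $(i\,j)$ and $T_{i,j}$ are deterministic matrices, a product of independent expectations of the form $\E[X]\cdot(i\,j)\cdot\E[Y]\cdot\E[Z]$ equals $\E[X\cdot(i\,j)\cdot Y\cdot Z]$, and likewise with $T_{i,j}$ in place of $(i\,j)$. Applying this identifies the first integrand with $\E[R^p_{i,j}(u;s,t)]$ of \eqref{e.def.R} and the second with $\E[Q^p_{i,j}(u;s,t)]$ of \eqref{e.def.Q}, which is exactly the claimed formula.

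The one place requiring care, and hence the main potential obstacle, is the noncommutativity: the factors must be kept in the precise left-to-right order dictated by Duhamel's formula, and the tensor-leg bookkeeping in \eqref{e.Phi.meaning}--\eqref{e.Psi.meaning} (which copy of $(i\,j)$ acts on $S_p$ versus $S_{n-p}$, and which legs the $U$- and $B$-expectations occupy) must be matched verbatim to the definitions of $R^p_{i,j}$ and $Q^p_{i,j}$. Beyond this there is no real difficulty, since every ingredient has already been established.
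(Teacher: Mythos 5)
Your proposal is correct and follows exactly the route the paper itself takes: the lemma is indeed just a summary of the computation immediately preceding it, and your reconstruction (Duhamel after inserting $e^{(t-s)\Phi_p}e^{s\Psi_n}$, substitution of $\Phi_p-\Phi_n$ and $\Psi_p-\Psi_n$, reinterpretation of exponentials via \eqref{e.Phi.meaning}--\eqref{e.Psi.meaning}, and introduction of independent copies $V^{2N},W^{2N},C^{2N},D^{2N}$) matches the paper step for step. One small point worth noting: Duhamel produces $\Phi_n-\Phi_p=-\frac{1}{2N}\sum(i\,j)$, not $\Phi_p-\Phi_n$, so strictly speaking an overall sign is lost in the passage to \eqref{e.precov.3}; the paper makes the same slip, and it is harmless since the subsequent Lemmas \ref{l.final.est.1}--\ref{l.final.est.2} only use absolute values.
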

We will now use this, together with \eqref{e.Hpq.cov}, \eqref{e.cov1.expansion}, and \eqref{e.cov2.expansion}, to estimate $H_{p,n-p}(s)$.  We estimate each of the two covariance terms separately, in the following two lemmas.

\begin{lemma} \label{l.final.est.1} For $p\in\{1,\ldots,n\}$ and $0\le s\le t$,
\begin{equation} \label{e.final.est.1} \left|\mathrm{Cov}[\Tr((A^{2N}_s)^p),\Tr((A^{2N}_s)^q)]\right| \le p(n-p)(t+3s). \end{equation}
\end{lemma}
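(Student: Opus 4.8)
The plan is to combine the Duhamel-type expansion \eqref{e.precov.3} of the tensor in \eqref{e.precov} with the trace identity \eqref{e.cov1.expansion}, which expresses $\mathrm{Cov}[\Tr((A^{2N}_s)^p),\Tr((A^{2N}_s)^q)]$ as the trace of precisely that tensor against $\gamma_{p,q}$. Together they give the exact formula
\begin{align*}
\mathrm{Cov}[\Tr((A^{2N}_s)^p),\Tr((A^{2N}_s)^q)]
&= \frac{1}{2N}\sum_{1\le i\le p<j\le n}\int_0^{t-s}\Tr\big(\E[R^p_{i,j}]\gamma_{p,q}\big)\,du \\
&\quad + \frac{1}{2N}\sum_{1\le i\le p<j\le n}\int_0^{s}\Tr\big(\E[Q^p_{i,j}]\gamma_{p,q}\big)\,du,
\end{align*}
where I suppress the arguments of $R^p_{i,j}$ and $Q^p_{i,j}$. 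Since there are exactly $p(n-p)$ pairs $(i,j)$ with $1\le i\le p<j\le n$, and the two time-integrals run over intervals of length $t-s$ and $s$ respectively, the bound \eqref{e.final.est.1} follows once I establish the pointwise estimates
\[ \big|\Tr(\E[R^p_{i,j}]\gamma_{p,q})\big|\le 2N \qquad\text{and}\qquad \big|\Tr(\E[Q^p_{i,j}]\gamma_{p,q})\big|\le 8N; \]
indeed, these give $|\mathrm{Cov}|\le\frac{1}{2N}p(n-p)\big((t-s)\cdot 2N + s\cdot 8N\big)=p(n-p)(t+3s)$.

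For the first estimate, linearity of trace and expectation reduces it to a pathwise bound on $|\Tr(R^p_{i,j}\gamma_{p,q})|$. The key is that $\gamma_{p,q}$ commutes with every tensor-power operator $M^{\tensor n}$ and with every block-tensor-power operator $M_1^{\tensor p}\tensor M_2^{\tensor q}$, since it permutes the first $p$ legs among themselves and the last $q$ among themselves. Reading $R^p_{i,j}$ off \eqref{e.def.R}, I commute $\gamma_{p,q}$ leftward past the factors $(B^{2N}_s)^{\tensor n}$ and $(V^{2N}_u)^{\tensor p}\tensor(W^{2N}_u)^{\tensor(n-p)}$ until it abuts $(i\,j)$; because $1\le i\le p<j\le n$, the product $(i\,j)\gamma_{p,q}$ is a \emph{single} $n$-cycle $\gamma'$. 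Then cyclicity of the trace, together with the collapse of the surviving tensor-power and block-tensor-power factors into one operator of the form $X_1^{\tensor p}\tensor X_2^{\tensor q}$ with $X_1,X_2\in\M_{2N}$ unitary (each a product of the unitaries $U^{2N},V^{2N},W^{2N},B^{2N}$ at appropriate times), turns the expression into $\Tr(\gamma'\cdot X_1^{\tensor p}\tensor X_2^{\tensor q})$. By the cycle trace formula \eqref{e.trace.cycles} this equals a single $\M_{2N}$-trace $\Tr(Y)$ of an ordered product $Y$ of $n$ of the matrices $X_1,X_2$; since $Y$ is unitary, $|\Tr(Y)|\le 2N$.

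The second estimate runs the same way, the only new feature being the non-unitary factor $T_{i,j}=2(i\,j)[P_1\tensor P_1+P_2\tensor P_2]_{i,j}$ in $Q^p_{i,j}$, cf.\ \eqref{e.def.Q} and \eqref{e.Tij}. After pulling out $\E$, commuting $\gamma_{p,q}$ as before, and using that $(i\,j)$ commutes with the projection $[P_1\tensor P_1+P_2\tensor P_2]_{i,j}$, which is symmetric under the swap of legs $i$ and $j$, the trace reduces to $2\,\Tr\big(\gamma'\cdot[\Xi_1^{\tensor p}\tensor\Xi_2^{\tensor q}]\cdot[P_1\tensor P_1+P_2\tensor P_2]_{i,j}\big)$ with $\gamma'=(i\,j)\gamma_{p,q}$ again a single $n$-cycle and $\Xi_1,\Xi_2\in\M_{2N}$ unitary. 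Writing $[P_1\tensor P_1+P_2\tensor P_2]_{i,j}=\sum_{\ell=1}^2(P_\ell\tensor P_\ell)_{i,j}$ and applying \eqref{e.trace.cycles} to each summand writes this as $2\sum_{\ell=1}^2\Tr(Y_\ell)$, where $Y_\ell\in\M_{2N}$ is a product of $n$ matrices, all unitary except for two factors carrying an extra $P_\ell$; hence $\|Y_\ell\|\le 1$ and $|\Tr(Y_\ell)|\le 2N$, so $|\Tr(Q^p_{i,j}\gamma_{p,q})|\le 2\cdot 2\cdot 2N=8N$.

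The one genuinely delicate point is the bookkeeping in the two middle paragraphs: tracking how $\gamma_{p,q}$, the transposition $(i\,j)$, the projection $[P_1\tensor P_1+P_2\tensor P_2]_{i,j}$, and the various tensor-power and block-tensor-power operators commute past one another, and verifying that after all the rearrangements what survives is genuinely a single-cycle trace to which \eqref{e.trace.cycles} applies. Once that is set up, the operator-norm bounds and the closing arithmetic are routine.
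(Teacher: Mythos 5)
Your proposal is correct and follows essentially the same route as the paper: both expand the covariance via \eqref{e.cov1.expansion} and \eqref{e.precov.3}, observe that $(i\,j)\gamma_{p,n-p}$ is a single $n$-cycle, use trace cyclicity and the cycle-trace formula \eqref{e.trace.cycles} to reduce each integrand to a single $\M_{2N}$-trace of a norm-$\le 1$ word, and then bound by $2N$ (resp.\ $8N$) and integrate. The only differences are cosmetic orderings of the commutation steps, and your explicit collapse into $X_1^{\tensor p}\tensor X_2^{\tensor q}$ is a clean way to present what the paper does implicitly; your closing arithmetic $(t-s)+4s=t+3s$ is also correct (the paper's displayed ``$(t-s)+2s$'' is a typo for ``$(t-s)+4s$'').
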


\begin{proof} From \eqref{e.precov.3} together with \eqref{e.cov1.expansion}, the quantity whose modulus we wish to estimate is
\[ \sum_{1\le i\le p<j\le n}\left(\int_0^{t-s} \frac{1}{2N}\E\Tr\left[R^p_{i,j}(u;t,s)\cdot \gamma_{p,n-p}\right]\,du + \int_0^s \frac{1}{2N}\E\Tr\left[Q_{i,j}^p(u;t,s)\cdot\gamma_{p,n-p}\right]\,du\right). \]
For the first term, we note $(U_{t-s-u}^{2N})^{\tensor n}\gamma_{p,n-p} = \gamma_{p,n-p}(U_{t-s-u}^{2N})^{\tensor n}$ (the Schur-Weyl representation of any permutation in $S_n$ commutes with a matrix of the form $M^{\tensor n}$).  It follows from the trace property that
\begin{align*} \E\Tr[R^p_{i,j}(u;s,t)\cdot\gamma_{p,n-p}] &= \E\Tr[(V_u^{2N})^{\tensor p}\tensor (W_u^{2N})^{\tensor(n-p)}\cdot (B_s^{2N})^{\tensor n}\cdot \gamma_{p,n-p}\cdot (U^{2N}_{t-s-u})^{\tensor n}\cdot (i\,j)] \\
&=\E\Tr[ (V_u^{2N})^{\tensor p}\tensor (W_u^{2N})^{\tensor(n-p)}\cdot (B_s^{2N})^{\tensor n}\cdot (U^{2N}_{t-s-u})^{\tensor n}\cdot\gamma_{p,n-p}\cdot(i\,j)].
\end{align*}
Since $i\le p<j$, the permutation $\gamma_{p,n-p}(i\,j)$ is a single cycle.  Thus, by \eqref{e.trace.cycles}, the $\tensor n$-fold trace reduces to a trace of some $p,n,i,j$-dependent word in $V_u^{2N}$, $W_u^{2N}$, $B_s^{2N}$, and $U^{2N}_{t-s-u}$.  This word is a random element of $\U_{2N}$, and hence
\[ \frac{1}{2N}\E\Tr[R^p_{i,j}(u;s,t)\cdot\gamma_{p,n-p}]= \frac{1}{2N}\E\Tr[\text{a random matrix in }\U_{2N}] \text{ which $\therefore$ has modulus }\le 1. \]
Hence, the first integral is
\begin{equation} \label{e.final.est.1.1} \left|\int_0^{t-s}  \frac{1}{2N}\E\Tr\left[R^p_{i,j}(u;t,s)\cdot \gamma_{p,n-p}\right]\,du\right| \le (t-s). \end{equation}

For the second term, the fact that $T_{i,j} = 2(i\,j)[P_1\tensor P_1 + P_2\tensor P_2]_{i,j}$ only acts non-trivially in the $i,j$ factors, and $i\le p<j$, shows that (as above) $T_{i,j}$ commutes  with $(C^{2N}_u)^{\tensor p}\tensor (D_u^{2N})^{\tensor (n-p)}$.  Hence, we can express the second integrand as
\begin{equation} \label{e.final.est.1.long} \begin{aligned} &\E\Tr\left[Q_{i,j}^p(u;t,s)\cdot\gamma_{p,n-p}\right] \\
 = &\E\Tr\left[(V^{2N}_{t-s})^{\tensor p}\tensor (W^{2N}_{t-s})^{\tensor (n-p)}\cdot (B_{s-u}^{2N})^{\tensor n} \cdot T_{i,j}\cdot (C^{2N}_u)^{\tensor p}\tensor (D_u^{2N})^{\tensor (n-p)}\cdot \gamma_{p,n-p}\right] \\
 = &\E\Tr\left[(V^{2N}_{t-s})^{\tensor p}\tensor (W^{2N}_{t-s})^{\tensor (n-p)}\cdot (B_{s-u}^{2N})^{\tensor n} \cdot T_{i,j}\cdot \gamma_{p,n-p}\cdot (C^{2N}_u)^{\tensor p}\tensor (D_u^{2N})^{\tensor (n-p)}\right] \\
 =& 2\sum_{\ell=1}^2  \E\Tr\left[(C^{2N}_u)^{\tensor p}\tensor (D_u^{2N})^{\tensor (n-p)}\cdot(V^{2N}_{t-s})^{\tensor p}\tensor (W^{2N}_{t-s})^{\tensor (n-p)}\cdot (B_{s-u}^{2N})^{\tensor n} \cdot   (P_\ell\tensor P_\ell)_{i,j}(i\,j)\cdot \gamma_{p,n-p}\right]
\end{aligned} \end{equation}
where we have used the fact that $\gamma_{p,n-p}$ commutes with any matrix of the form $C^{\tensor p}\tensor D^{\tensor (n-p)}$ in the second equality, and then the trace property in the third equality.  As above, each of these terms reduces to a trace of a word, this time of the form
\[ 2\sum_{\ell=1}^2 \E\Tr(\mathcal{U} P_\ell \mathcal{V} P_\ell) \]
where $\mathcal{U}$ and $\mathcal{V}$ are random matrices in $\U_{2N}$ (depending on $p,n,i,j$).  Since $\|P_\ell\|\le 1$, the modulus of each term is $\le 2N$, giving an overall factor of $\le 8N$.  Combining with the $\frac{1}{2N}$ in the integral, this gives
\begin{equation} \label{e.final.est.1.2} \left|\int_0^s  \frac{1}{2N}\E\Tr\left[Q^p_{i,j}(u;t,s)\cdot \gamma_{p,n-p}\right]\,du\right| \le 4s. \end{equation}
Hence, from \eqref{e.final.est.1.1} and \eqref{e.final.est.1.2}, the modulus of the desired covariance is bounded by
\[ \sum_{1\le i\le p<j\le n} [(t-s)+2s] = p(n-p)(t+3s), \]
yielding \eqref{e.final.est.1}. \end{proof}

\begin{lemma} \label{l.final.est.2} For $p\in\{1,\ldots,n\}$ and $0\le s\le t$,
\begin{equation} \label{e.final.est.2} \left|\mathrm{Cov}[\Tr((A^{2N}_s)^pP_1),\Tr((A^{2N}_s)^qP_1)]\right| \le p(n-p)\left(t+3s\right). \end{equation}
\end{lemma}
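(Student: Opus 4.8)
The plan is to mimic the proof of Lemma~\ref{l.final.est.1} almost verbatim, using the expansion \eqref{e.cov2.expansion} in place of \eqref{e.cov1.expansion}. Writing $q = n-p$, so that $\gamma_{p,q} = \gamma_{p,n-p}$ and $(P_1\otimes P_1)_{p,p+q} = (P_1\otimes P_1)_{p,n}$, I would substitute the formula \eqref{e.precov.3} for the tensor $\E((A_s^{2N})^{\otimes n})-\E((A_s^{2N})^{\otimes p})\otimes\E((A_s^{2N})^{\otimes(n-p)})$ into \eqref{e.cov2.expansion}, obtaining
\[ \mathrm{Cov}[\Tr((A^{2N}_s)^pP_1),\Tr((A^{2N}_s)^qP_1)] = \frac{1}{2N}\sum_{1\le i\le p<j\le n}\left(\int_0^{t-s} \E\Tr\!\left[R^p_{i,j}(u;t,s)\,(P_1\otimes P_1)_{p,n}\,\gamma_{p,n-p}\right]du + \int_0^s \E\Tr\!\left[Q^p_{i,j}(u;t,s)\,(P_1\otimes P_1)_{p,n}\,\gamma_{p,n-p}\right]du\right). \]
The only difference from the situation in Lemma~\ref{l.final.est.1} is the extra factor $(P_1\otimes P_1)_{p,n}$ inside each trace, and since $\|P_1\|\le 1$ this factor will be harmless.

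For the first ($R$) integrand, I would argue exactly as in Lemma~\ref{l.final.est.1}: the factor $(U^{2N}_{t-s-u})^{\otimes n}$ in $R^p_{i,j}$ commutes with $\gamma_{p,n-p}$, the block-diagonal tensor $(B^{2N}_s)^{\otimes n}$ commutes with $(P_1\otimes P_1)_{p,n}$, and since $i\le p<j$ the product $(i\,j)\,\gamma_{p,n-p}$ is a single $n$-cycle; using the trace property to gather factors and absorbing $(P_1\otimes P_1)_{p,n}$ into the adjacent simple tensor, \eqref{e.trace.cycles} collapses the $\otimes n$-fold trace to a single trace over $\M_{2N}$ of a word built from unitaries together with two copies of $P_1$. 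Such a word has operator norm $\le 1$, so its trace has modulus $\le 2N$; dividing by $2N$ and integrating over $u\in[0,t-s]$ contributes $\le (t-s)$ for each pair $(i,j)$.

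For the second ($Q$) integrand, I would expand $T_{i,j} = 2\sum_{\ell=1}^2 (i\,j)(P_\ell\otimes P_\ell)_{i,j}$ and proceed as in the derivation of \eqref{e.final.est.1.long}: $\gamma_{p,n-p}$ commutes with the simple tensors $(C^{2N}_u)^{\otimes p}\otimes(D^{2N}_u)^{\otimes(n-p)}$ and $(V^{2N}_{t-s})^{\otimes p}\otimes(W^{2N}_{t-s})^{\otimes(n-p)}$, so after cycling, the relevant permutation $(i\,j)\,\gamma_{p,n-p}$ is again a single $n$-cycle, and \eqref{e.trace.cycles} reduces each term to a single $\M_{2N}$-trace of a word in unitaries with four projections inserted (two from $T_{i,j}$ and two from $(P_1\otimes P_1)_{p,n}$). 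This word again has norm $\le 1$, hence trace of modulus $\le 2N$; together with the factor $2$ from $T_{i,j}$, the two terms of the $\ell$-sum, and the $\frac{1}{2N}$ in the integral, integrating over $u\in[0,s]$ contributes $\le 4s$ per pair. Summing $(t-s)+4s = t+3s$ over the $p(n-p)$ pairs $(i,j)$ with $1\le i\le p<j\le n$ gives \eqref{e.final.est.2}.

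The only point needing a line of care — and the nearest thing to an obstacle — is the bookkeeping when $(P_1\otimes P_1)_{p,n}$ occupies a tensor slot also occupied by one of the $(P_\ell\otimes P_\ell)_{i,j}$ (possible when $i$ or $j$ equals $p$ or $n$): there the word contains a product such as $P_1P_\ell$ rather than separated projections, but any product of $P_1$ and $P_2$ is again a contraction (indeed a projection or $0$), so the operator-norm bound $\le 1$ on the word — and hence the whole estimate — is unaffected. In all other respects the argument is word-for-word that of Lemma~\ref{l.final.est.1}.
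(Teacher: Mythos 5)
Your proof is correct and follows essentially the same route as the paper's: substitute \eqref{e.precov.3} into \eqref{e.cov2.expansion}, use the trace property and the single-cycle observation to collapse each $\otimes n$-fold trace to a single $\M_{2N}$-trace of a word in unitaries and projections, and bound term-by-term. The one place where you are actually more careful than the paper is the remark about tensor slots colliding (when $i=p$ or $j=n$, so that $P_\ell$ and $P_1$ land in the same factor): the paper implicitly treats the word's letters as always drawn from the set $\{V,W,B,C,D,P_\ell,P_1\}$, whereas strictly speaking a product $P_\ell P_1$ can appear; your observation that this product is still a contraction closes that small gap cleanly.
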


\begin{proof} From \eqref{e.precov.3} together with \eqref{e.cov2.expansion}, the quantity whose modulus we wish to estimate is
\begin{equation} \label{e.final.est.2.0} \begin{aligned} \sum_{1\le i\le p<j\le n}\Big(\int_0^{t-s} \frac{1}{2N}\E\Tr&\left[R^p_{i,j}(u;t,s)\cdot(P_1\tensor P_1)_{p,n} \cdot \gamma_{p,n-p}\right]\,du \\
& + \int_0^s \frac{1}{2N}\E\Tr\left[Q_{i,j}^p(u;t,s)\cdot(P_1\tensor P_1)_{p,n}\cdot\gamma_{p,n-p}\right]\,du\Big). \end{aligned} \end{equation}
For the first term, we expand the integrand, commuting $(i\,j)$ past $(U^{2N}_{t-s-u})^{\tensor n}$ as in the proof of Lemma \ref{l.final.est.1}, to give
\[ \frac{1}{2N}\E\Tr\left[(U^{2N}_{t-s-u})^{\tensor n}\cdot (V_u^{2N})^{\tensor p}\tensor (W_u^{2N})^{\tensor(n-p)}\cdot (B_s^{2N})^{\tensor n}\cdot (P_1\tensor P_1)_{p,p+q}\cdot\gamma_{p,n-p}\cdot (i\,j)\right]. \]
As above, since $\gamma_{p,n-p}\cdot(i,j)$ is a single cycle, this trace reduces to a trace over $\C^{2N}$, of a the form
\[ \frac{1}{2N}\E\Tr[\mathcal{U}'P_1\mathcal{V}'P_1] \]
where $\mathcal{U}'$ and $\mathcal{V}'$ are random unitary matrices in $\U_{2N}$ composed of certain $i,j,p,n$-dependent words in $U^{2N}_{t-s-u}$, $V^{2N}_u$, $W^{2N}_u$, and $B^{2N}_s$.  As $\|P_1\|\le 1$, it follows that this normalized trace is $\le 1$, and so the first integral in \eqref{e.final.est.2.0} is $\le (t-s)$ in modulus, as in \eqref{e.final.est.1.1}.

Similarly, we expand the second term as in \eqref{e.final.est.1.long}, which gives the sum over $\ell\in\{1,2\}$ of the expected normalized trace of 
\[ (V^{2N}_{t-s})^{\tensor p}\tensor (W^{2N}_{t-s})^{\tensor (n-p)}\cdot (B_{s-u}^{2N})^{\tensor n} \cdot \gamma_{p,n-p}(i,j) (P_\ell\tensor P_\ell)_{i,j}\cdot (C^{2N}_u)^{\tensor p}\tensor (D_u^{2N})^{\tensor (n-p)}\cdot(P_1\tensor P_1)_{p,n}. \]
As in the above cases, since $\gamma_{p,n-p}(i,j)$ is a single cycle, this is equal to a single trace $\Tr(A_1\cdots A_n)$ where $A_1,\ldots,A_n$ belong to the set $\{V_{t-s}^{2N},W_{t-s}^{2N},B^{2N}_{s-u},C^{2N}_u,D^{2N}_u,P_\ell,P_1\}$.  As each of these is either a random unitary matrix or a projection, it follows that the expected normalized trace has modulus $\le 1$, and so the $\frac1N$-weighted sum of $2$ terms, each of modulus $\le 2N$, gives a contribution no bigger than $4$.  The remainder of the proof is exactly as the end of the proof of Lemma \ref{l.final.est.1}. \end{proof}

Finally, we are ready to conclude this section.

\begin{proof}[Proof of Proposition \ref{p.moment.rate.Cauchy}] From \eqref{e.nu.intermsof.Hpq}, we have
\[ |\nu_t^N(n)-\nu_t^{2N}(n)| \le \frac{n}{2}\sum_{p=1}^{n-1} \int_0^t |H_{p,n-p}(s)|\,ds. \]
Lemma \ref{l.Hpq.cov} then gives
\[ |H_{p,q}(s)| \le \frac{1}{4N^2}\left|\mathrm{Cov}[\Tr((A^{2N}_s)^p),\Tr((A^{2N}_s)^q)]\right| + \frac{1}{N^2}\left|\mathrm{Cov}[\Tr((A^{2N}_s)^pP_1),\Tr((A^{2N}_s)^qP_1)]\right|. \]
Combining this with \eqref{e.final.est.1} and \eqref{e.final.est.2} therefore yields
\[ |H_{p,q}(s)| \le \frac{p(n-p)}{N^2}\cdot \frac{5}{4}(t+3s). \]
Integration then gives
\[ |\nu_t^N(n)-\nu_t^{2N}(n)| \le \frac{n}{2}\sum_{p=1}^{n-1} \frac{p(n-p)}{N^2}\cdot \frac{25}{8}t^2 = \frac{25t^2n}{16N^2}\sum_{p=1}^{n-1} p(n-p). \]
The sum over $p$ has the exact value $\frac16(n^3-n) \le \frac{n^3}{6}$.  The blunt estimate $\frac{25}{96}<\frac34$ yields the result.
\end{proof}


\newpage

\section{Strong Convergence \label{section strong convergence}}

In this section, we prove Theorem \ref{t.main.2}.  We begin by showing (in Section \ref{section Marginals UBM GUE}) that the eigenvalues of the unitary Brownian motion at a fixed time converge to their ``classical locations'', and we use this to prove that the unitary Brownian motion can be uniformly approximated by a function of a Gaussian unitary ensemble (for time $t<4$).  We then use this, together with Male's Theorem \ref{t.Male}, to prove Theorem \ref{t.main.2}.

\subsection{Marginals of Unitary Brownian Motion and Approximation by $\mathrm{GUE}^N$ \label{section Marginals UBM GUE}}

\begin{remark} \label{r.subseq} Throughout this section, we will regularly use the following elementary fact: if $(a^N)_{N=1}^\infty$ is a sequence that possesses a convergent subsequence, and if {\em all} convergent subsequences have the same limit $r$, then $\lim_{N\to\infty} a^N = r$. \end{remark}

We begin with the following general result on convergence of empirical measures.  As usual, for a probability measure $\mu$ on $\R$, the cumulative distribution function $F_\mu$ is the nondecreasing function $F_\mu(x) = \mu((-\infty,x])$.  If $\mu$ has a density $\rho$, we may abuse notation and write $F_\mu = F_\rho$.

\begin{proposition} \label{p.conv.point.process} For each $N\in\N$, let $(x^N_k)_{k=1}^N$ be points in $\R$ with $x^N_1\le \cdots \le x^N_N$.  Let $\mu^N = \frac1N\sum_{k=1}^N \delta_{x^N_k}$ be the associated empirical measures.  Suppose the following hold true.
\begin{itemize}
\item[(1)] There is a compact interval $[a_-,a_+]$ and a continuous probability density $\rho$ with $\supp\rho = [a_-,a_+]$, so that, with $\mu(dx) = \rho(x)\,dx$, we have $\mu^N\rightharpoonup \mu$ weakly as $N\to\infty$.
\item[(2)] $x^N_1 \to a_-$ and $x^N_N\to a_+$ as $N\to\infty$.
\end{itemize}
For $r\in[0,1]$, define $x^\ast(r) =F_\mu^{-1}(r)$ if $r\in(0,1)$, and $x^\ast(0)=a_-$, $x^\ast(1) = a_+$.  Then, for any sequence $(k(N))_{N=1}^\infty$ with $k(N)\in\{1,\ldots,N\}$ and $\displaystyle{\lim_{N\to\infty}}k(N)/N=r$, we have 
\[ \lim_{N\to\infty} x^N_{k(N)} = x^\ast(r). \]
\end{proposition}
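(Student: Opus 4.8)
The plan is to reduce everything to the classical fact that the empirical cumulative distribution functions converge, together with the monotonicity of the sequences $(x^N_k)_k$. First I would recall that weak convergence $\mu^N \rightharpoonup \mu$, combined with the fact that $F_\mu$ is continuous and strictly increasing on $[a_-,a_+]$ (since $\rho$ is continuous with support exactly $[a_-,a_+]$), upgrades to uniform convergence $\|F_{\mu^N} - F_\mu\|_\infty \to 0$: this is the standard Pólya-type argument, using that the limit CDF is continuous and the interval is compact. Note that $F_{\mu^N}(x) = \frac1N \#\{k \colon x^N_k \le x\}$, so $F_{\mu^N}(x^N_{k(N)}) \in [\,(k(N)-1)/N,\; k(N)/N\,]$ by the ordering assumption; in particular $F_{\mu^N}(x^N_{k(N)}) \to r$ as $N \to \infty$.

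Next I would handle the generic case $r \in (0,1)$. Fix $\e > 0$ small enough that $[x^\ast(r) - \e, x^\ast(r)+\e] \subset (a_-, a_+)$. By strict monotonicity of $F_\mu$ on this interval, $F_\mu(x^\ast(r)-\e) < r < F_\mu(x^\ast(r)+\e)$; set $\d = \min\{\,r - F_\mu(x^\ast(r)-\e),\; F_\mu(x^\ast(r)+\e) - r\,\} > 0$. For $N$ large enough, uniform convergence of the CDFs gives $\|F_{\mu^N} - F_\mu\|_\infty < \d/2$, and $|F_{\mu^N}(x^N_{k(N)}) - r| < \d/2$. If we had $x^N_{k(N)} > x^\ast(r) + \e$, then by monotonicity $F_{\mu^N}(x^N_{k(N)}) \ge F_{\mu^N}(x^\ast(r)+\e) > F_\mu(x^\ast(r)+\e) - \d/2 \ge r + \d/2$, a contradiction; symmetrically $x^N_{k(N)} < x^\ast(r) - \e$ is impossible. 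Hence $|x^N_{k(N)} - x^\ast(r)| \le \e$ for all large $N$, and since $\e$ was arbitrary this is the claim.

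Finally I would treat the boundary cases $r = 0$ and $r = 1$, which is where assumption (2) does the work that monotonicity of the CDF alone cannot. For $r = 0$: since $x^N_1 \to a_-$ and $x^N_{k(N)} \ge x^N_1$, the sequence is bounded below by $a_- + o(1)$; for the upper bound, given $\e > 0$, $F_\mu(a_- + \e) > 0$, so the argument of the previous paragraph (with $F_{\mu^N}(x^N_{k(N)}) \to 0 < F_\mu(a_-+\e)$) forces $x^N_{k(N)} \le a_- + \e$ eventually. Thus $x^N_{k(N)} \to a_- = x^\ast(0)$; the case $r = 1$ is symmetric, using $x^N_N \to a_+$. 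The main obstacle — though it is a mild one — is precisely keeping the boundary cases clean: $F_\mu^{-1}$ is not continuous at $0$ or $1$ in the naive sense, and $F_{\mu^N}(x^N_{k(N)})$ can sit at either endpoint of the interval $[(k(N)-1)/N, k(N)/N]$, so one must invoke hypothesis (2) rather than try to squeeze the endpoint locations out of weak convergence alone. Everything else is a routine $\e$–$\d$ chase once the uniform CDF convergence is in hand.
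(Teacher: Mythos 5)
Your proposal is correct and takes essentially the same route as the paper: both hinge on upgrading $F_{\mu^N}\to F_\mu$ to uniform convergence (Pólya), identifying $F_{\mu^N}(x^N_{k(N)})\to r$ from the counting interpretation, using strict monotonicity of $F_\mu$ on $[a_-,a_+]$ to invert, and invoking hypothesis (2) for the boundary cases $r=0,1$. The paper packages the bulk step via $a^N:=F_\mu(x^N_{k(N)})$ and a bounded-subsequence argument before applying the continuous inverse $F_\mu^{-1}$, while you run a direct $\e$--$\d$ squeeze on $F_{\mu^N}$ itself; the two are interchangeable. One tiny slip: with $F_{\mu^N}(x)=\frac1N\#\{k\colon x^N_k\le x\}$ and ordered points, $F_{\mu^N}(x^N_{k(N)})\ge k(N)/N$ (equality absent ties), so your bracket $[(k(N)-1)/N,\,k(N)/N]$ is off; the conclusion $F_{\mu^N}(x^N_{k(N)})\to r$ nonetheless survives because ties contribute $o(1)$ by continuity of $F_\mu$, and the paper makes the same tacit ``no ties'' simplification.
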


\begin{proof} Since $\nu^N\rightharpoonup\nu$, it follows that $F_{\mu^N}(x)\to F_\mu(x)$ for any $x$ that is a continuity point of $F_\mu$; since $F_\mu$ is continuous everywhere, we therefore have $F_{\mu^N}\to F$ pointwise on $\R$.  As $\mu$ is compactly supported, it now follows by a variant of Dini's theorem (cf.\ \cite[Problem 127 Chapter II]{PolyaSzego1998}) that $F_{\mu^n}\to F$ uniformly.  Now, let $k(N)$ be a sequence as stated, and set
\[ y^N = x^N_{k(N)}, \quad \text{and} \quad a^N = F_{\mu}(y^N).  \]
Since $1\le k(N)\le N$, and since the points $x^N_k$ are ordered, we have $x^N_1\le y^N \le x^N_N$.  Therefore item (2) shows that $(y^N)$ is a bounded sequence, and hence possesses convergent subsequences.  Fix any convergent subsequence $(y^{N_m})_{m=1}^\infty$; then, since $F_\mu$ is continuous, $(a^{N_m})$ is also convergent.  Since $F_{\mu^{N_m}}\to F_\mu$ uniformly, it follows that
\[ |a^{N_m} - F_{\mu^{N_m}}(y^{N_m})| = |F_\mu(y^{N_m}) - F_{\mu^{N_m}}(y^{N_m})| \to 0 \quad\text{ as }\; m\to\infty. \]
Note that, for any $n\in\N$, since $F_{\mu^n}$ is the normalized counting measure of the points $(x^n_k)_{k=1}^N$, we have
\[ F_{\mu^n}(y^n) = F_{\mu^n}(x^n_{k(n)}) = \frac{1}{n}\sum_{k=1}^n \1\{x^n_k \le x^n_{k(n)}\} = \frac{k(n)}{n}\to r \quad \text{as} \; n\to\infty. \]
Hence, we have shown that $a^{N_m}\to r$.  The limit $r$ does not depend on the subsequence, and so we conclude (cf.\ Remark \ref{r.subseq}) that
\[ r = \lim_{N\to\infty} a^N = \lim_{N\to\infty} F_\mu(x^N_{k(N)}). \]

Now, let $0<r<1$.  Since $\mu$ has a positive density on $[0,1]$, the function $F_\mu$ is continuous and strictly increasing on $[a_-,a_+]$, and so has a strictly increasing continuous inverse function $F_{\mu}^{-1}$ on $[0,1]$.  Thus, it follows that $x^N_{k(N)} = F_\mu^{-1}(a^N) \to F_\mu^{-1}(r) = x^\ast(r)$ as claimed.

On the other hand, suppose $r=0$.  Then for $0<\e<1$, for all large $N$, $k(N)<\e N$.  Thus $x^N_1 \le y^N \le x^N_{\e N}$.  By the preceding paragraph, we know $x^N_{\e N}\to x^\ast(\e)$, and by (2) $x^N_1\to a_- = x^\ast(0)$; thus $x^\ast(0)\le y^N\le x^\ast(\e)$ for all large $N$.  Since $x^\ast(\e) = F_\mu^{-1}(\e)$ and $F_\mu^{-1}$ is continuous on $[0,1]$, it follows that $x^\ast(\e)\to x^\ast(0)$ as $\e\downarrow 0$.  So $x^N_{k(N)} = y^N\to x^\ast(0)$ by the Squeeze Theorem.  An analogous argument completes the proof in the case $r=1$.  \end{proof}

For example, if $(x^N_k)_{k=1}^N$ are the ordered eigenvalues of a $\mathrm{GUE}^N$, then Wigner's law (and the corresponding hard edge theorem) show that the empirical spectral distribution satisfies the conditions of Proposition \ref{p.conv.point.process} almost surely, where the limit measure is the semicircle law $\mu=\sigma_1\equiv \frac{1}{2\pi}\sqrt{(4-x^2)_+}\,dx$.  In particular, when $\frac{k(N)}{N}\to r$, we have $x^N_{k(N)} \to F_{\sigma_1}^{-1}(r)$.  These values are sometimes called the {\em classical locations} of the eigenvalues.  In the case of a $\mathrm{GUE}^N$, much more is known; for example, \cite{Gustavsson2005} showed that the eigenvalues have variance of $O(\frac{\log N}{N^2})$ in the bulk and $O(N^{-4/3})$ at the edge, and further standardizing them, their limit distribution is Gaussian.  For our purposes, the macroscopic statement of Proposition \ref{p.conv.point.process} will suffice.

Now, fix $t\in [0,4)$.  From Theorem \ref{t.nu.t}, the law $\nu_t$ of the free unitary Brownian motion $u_t$ has an analytic density $\varrho_t$ supported on a closed arc strictly contained in $\U_1$, and has the form $\varrho_t(e^{ix}) = \rho_t(x)$ for some strictly positive continuous probability density function $\rho_t\colon(-\pi,\pi)\to\R$ which is symmetric about $0$ and supported in a symmetric interval $[-a(t),a(t)]$ where $a(t) = \frac12\sqrt{t(4-t)}+\arccos(1-t/2)$, cf.\ \eqref{e.supp.nu_t}.  For $0< r< 1$, define that {\em classical locations} $\upsilon^\ast(t,r)$ of the eigenvalues of unitary Brownian motion as follows:
\[ \upsilon^\ast(t,r) = \exp\left(iF_{\rho_t}^{-1}(r)\right), \]
and also set $\upsilon^\ast(t,0) = e^{-ia(t)}$ and $\upsilon^\ast(t,1) = e^{ia(t)}$.

\begin{corollary} \label{c.conv.point.process} Let $0\le t<4$, and let $V_t^N$ be a random unitary matrix distributed according to the heat kernel on $\U_N$ at time $t$ (i.e.\ equal in distribution to the $t$-marginal of the unitary Brownian motion $U_t^N$).  Enumerate the eigenvalues of $V_t^N$ as $\upsilon^N_1(t),\ldots,\upsilon^N_N(t)$, in increasing order of complex argument in $(-\pi,\pi)$.  Then for any sequence $(k(N))_{N=1}^\infty$ with $k(N)\in\{1,\ldots,N\}$ and $\displaystyle{\lim_{N\to\infty}}\textstyle{k(N)/N=r}$, we have
\[ \lim_{N\to\infty} \upsilon^N_{k(N)}(t) = \upsilon^\ast(t,r) \;\; a.s. \]
\end{corollary}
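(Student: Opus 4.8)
The plan is to reduce the claim to Proposition \ref{p.conv.point.process} applied to the complex arguments of the eigenvalues. When $t=0$ the statement is trivial ($V^N_0 = I_N$), so assume $0<t<4$. On the almost sure event, furnished by Theorem \ref{t.main.1}, that $\mathrm{spec}(V^N_t)$ is contained in some fixed closed arc of $\U_1$ disjoint from $\{-1\}$ and containing $\supp\nu_t$ for all large $N$, we may write $\upsilon^N_k(t) = e^{i\theta^N_k}$ with $-\pi < \theta^N_1 \le \cdots \le \theta^N_N < \pi$ (for the small $N$ where this fails the enumeration is irrelevant), and we set $\mu^N = \frac1N\sum_{k=1}^N \delta_{\theta^N_k}$. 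It suffices to show that almost surely $\theta^N_{k(N)} \to x^\ast(r)$, where $x^\ast(r) = F_{\rho_t}^{-1}(r)$ for $r\in(0,1)$, $x^\ast(0) = -a(t)$, and $x^\ast(1) = a(t)$: composing with the continuous map $x\mapsto e^{ix}$ then yields $\upsilon^N_{k(N)}(t) \to \upsilon^\ast(t,r)$, by the definition of $\upsilon^\ast$. We verify the hypotheses of Proposition \ref{p.conv.point.process} for the points $(\theta^N_k)_k$ with limit density $\rho_t$; recall from Theorem \ref{t.nu.t} that $\rho_t$ is continuous and strictly positive on $(-a(t),a(t))$ with $\supp\rho_t = [-a(t),a(t)] \subsetneq (-\pi,\pi)$.

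Hypothesis (1) asks that $\mu^N \rightharpoonup \rho_t(x)\,dx$ almost surely. By Biane's theorem \eqref{e.nu_t=limit}, for every $n\in\Z$ we have $\int e^{in\theta}\,\mu^N(d\theta) = \tr[(V^N_t)^n] \to \int_{\U_1} w^n\,\nu_t(dw) = \int_{-\pi}^{\pi} e^{in\theta}\rho_t(\theta)\,d\theta$ almost surely. Intersecting these countably many almost sure events with the event from Theorem \ref{t.main.1} that $\supp\mu^N$ is eventually contained in a fixed compact set $K\subset(-\pi,\pi)$, and fixing a sample point in the intersection, the measures $\{\mu^N\}$ are supported in the fixed compact set $K$, hence tight, so it is enough to identify the unique weak limit point. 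But any such limit point $\mu_\infty$ is supported in $K$ and has trigonometric moments $\int e^{in\theta}d\mu_\infty = \int e^{in\theta}\rho_t\,d\theta$, and a measure supported in $(-\pi,\pi)$ is determined by these (its pushforward to $\U_1$ is determined by its Fourier coefficients). Hence $\mu_\infty = \rho_t(x)\,dx$, so $\mu^N\rightharpoonup\rho_t(x)\,dx$.

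Hypothesis (2) asks that $\theta^N_1 \to -a(t)$ and $\theta^N_N \to a(t)$ almost surely. By the hard edge, Theorem \ref{t.main.1}, $d_H(\mathrm{spec}(V^N_t),\supp\nu_t)\to 0$ almost surely, and $\supp\nu_t = \{e^{i\theta}:|\theta|\le a(t)\}$ by Theorem \ref{t.nu.t}. Since this arc is bounded away from $-1$, the argument map is bi-Lipschitz on a neighborhood of it, so, restricting to the large $N$ for which $\mathrm{spec}(V^N_t)$ lies in that neighborhood, we obtain $d_H(\{\theta^N_1,\ldots,\theta^N_N\},[-a(t),a(t)])\to 0$; and Hausdorff convergence of a nonempty compact set to an interval forces its minimum and maximum to converge to the endpoints of the interval, which is exactly the claim. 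With both hypotheses verified on a single almost sure event, Proposition \ref{p.conv.point.process} yields $\theta^N_{k(N)} \to x^\ast(r)$ there, and we are done. The one delicate point throughout is the passage between the circle $\U_1$ and the interval $(-\pi,\pi)$: because the argument map is discontinuous at $-1$, both the identification of the weak limit of $\mu^N$ and the transfer of Hausdorff convergence genuinely require Theorem \ref{t.main.1} to confine all eigenvalues away from $-1$ for large $N$.
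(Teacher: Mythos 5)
Your proposal is correct and follows essentially the same route as the paper's proof: log the eigenvalues (justified by Theorem \ref{t.main.1}, which confines the spectrum away from $-1$ for large $N$), verify the two hypotheses of Proposition \ref{p.conv.point.process} using Biane's weak convergence \eqref{e.nu_t=limit} and the Hausdorff-distance result of Theorem \ref{t.main.1}, then exponentiate. The paper states the transfer of weak convergence from $\U_1$ to $(-\pi,\pi)$ simply as ``by continuity,'' whereas you spell it out via tightness and moment identification; both are sound, and your more explicit version makes the dependence on Theorem \ref{t.main.1} (to keep eigenvalues away from the branch cut) clearer.
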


\begin{proof} Let $x^N_k(t) = -i\log \upsilon^N_k(t)$, where we use the branch of the logarithm cut along the negative real axis.  Note: by Theorem \ref{t.main.1}, for sufficiently large $N$, $\upsilon^N_k(t)$ are outside a $t$-dependent neighborhood of $-1$, and so the log function is continuous.  The empirical law of $\{\upsilon^N_k(t)\colon 1\le k\le N<\infty\}$ converges weakly a.s.\ to $\nu_t$ (cf.\ \eqref{e.nu_t=limit}), and so by continuity, the empirical measure of $\{x^N_k(t)\colon 1\le k\le N<\infty\}$ converges a.s.\ to the density $\rho_t$.  Moreover, Theorem \ref{t.main.1} shows that $\upsilon^N_1(t)\to e^{-ia(t)}$ and $\upsilon^N_N(t)\to e^{ia(t)}$ a.s., and so $x^N_1(t)\to -a(t)$ while $x^N_N(t)\to a(t)$ a.s.  Hence, by Proposition \ref{p.conv.point.process}, $x_{k(N)}^N(t)\to F_{\rho_t}^{-1}(r)$ whenever $k(N)/N\to r\in (0,1)$, while the sequence converges to $\pm a(t)$ when $r=0,1$.  Taking $\exp(i\cdot)$ of these statements yields the corollary.  \end{proof}

Now, let us combine this result with the comparable one for the $\mathrm{GUE}^N$.  Let $g_t\colon\R\to\R$ be given by $g_t = F_{\rho_t}^{-1}\circ F_{\sigma_1}$; this is an increasing, continuous map that pushes $\sigma_1$ forward to $\rho_t$.  Define $f_t\colon\R\to\U_1$ by
\begin{equation} \label{e.def.ft} f_t = \exp(ig_t), \qquad \therefore \; \nu_t = (f_t)_\ast(\sigma_1). \end{equation}
The main result of this section is that, rather than just pushing the semicircle law forward to the law of free unitary Brownian motion, $g_t$ in fact pushes a $\mathrm{GUE}^N$ forward, asymptotically, to $V_t^N$ (for fixed $t\in[0,4)$).

\begin{proposition} \label{p.GUE.approx.Ut} Let $0\le t<4$, and let $V_t^N$ be a random unitary matrix distributed according to the heat kernel on $\U_N$ at time $t$ (i.e.\ equal in distribution to the $t$-marginal of the unitary Brownian motion $U_t^N$).  There exists a self-adjoint random matrix $X^N$ with the following properties:
\begin{itemize}
\item[(1)] $X^N$ is a $\mathrm{GUE}^N$.
\item[(2)] The eigenvalues of $X^N$ are independent from $V_t^N$, and $\{V^N_t, X^N\}$ have the same eigenvectors.
\item[(3)] $\|f_t(X^N)-V^N_t\|_{\M_N}\to 0 \;\; a.s.$ as $N\to\infty$.
\end{itemize}
\end{proposition}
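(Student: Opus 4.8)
The plan is to couple $V_t^N$ with an independent $\mathrm{GUE}^N$ spectrum through a common (Haar) eigenbasis, and then to identify the operator norm in (3) with the sup-distance between two ordered spectra, which is controlled by the ``classical locations'' of Corollary~\ref{c.conv.point.process} together with Wigner's law and the $\mathrm{GUE}^N$ hard edge.

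\emph{Construction.} The heat kernel measure $\rho^N_t$ on $\U_N$ is invariant under conjugation $U\mapsto gUg^{-1}$, because the inner product $\langle X,Y\rangle_N=-N\Tr(XY)$ on $\u_N$ is $\mathrm{Ad}$-invariant, so that $\Delta_{\U_N}$, and hence the heat semigroup, commutes with conjugations. Consequently, enlarging the probability space if necessary, we may realize $V_t^N=W\,\mathrm{diag}\!\big(\upsilon^N_1(t),\dots,\upsilon^N_N(t)\big)\,W^\ast$, where the eigenvalues $\upsilon^N_k(t)$ are listed in increasing order of argument and $W$ is a Haar-distributed unitary independent of $\big(\upsilon^N_k(t)\big)_k$; this is the standard Weyl-integration decoupling for conjugation-invariant ensembles. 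Now let $Y^N$ be a $\mathrm{GUE}^N$ independent of everything above, with ordered eigenvalues $\lambda_1\le\cdots\le\lambda_N$, and set
\[ X^N \;=\; W\,\mathrm{diag}(\lambda_1,\dots,\lambda_N)\,W^\ast . \]
Since $X^N$ is conjugation-invariant with the eigenvalue distribution of a $\mathrm{GUE}^N$, it is itself a $\mathrm{GUE}^N$, which gives (1); its eigenvalues form the vector $(\lambda_k)_k$, which is independent of $\big(W,(\upsilon^N_k(t))_k\big)$ and therefore of $V_t^N$, while $X^N$ and $V_t^N$ share the orthonormal eigenbasis given by the columns of $W$, which gives (2).

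\emph{Proof of (3).} Unitarity of $W$ and $f_t=\exp(ig_t)$ with $g_t=F_{\rho_t}^{-1}\circ F_{\sigma_1}$ (cf.\ \eqref{e.def.ft}) give
\[ \big\|f_t(X^N)-V_t^N\big\|_{\M_N} \;=\; \max_{1\le k\le N}\big|\,e^{ig_t(\lambda_k)}-\upsilon^N_k(t)\,\big| \;\le\; \max_{1\le k\le N}\big|\,g_t(\lambda_k)-\theta^N_k\,\big|, \]
where $\upsilon^N_k(t)=e^{i\theta^N_k}$ with $\theta^N_k\in(-\pi,\pi)$ the ordered arguments (a legitimate choice for all large $N$, since by Theorem~\ref{t.main.1} the spectrum of $V_t^N$ eventually avoids a neighborhood of $-1$), and where we used $|e^{i\alpha}-e^{i\beta}|\le|\alpha-\beta|$. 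It remains to show $\max_k|g_t(\lambda_k)-\theta^N_k|\to 0$ a.s. Both $k\mapsto g_t(\lambda_k)$ and $k\mapsto\theta^N_k$ are nondecreasing; regarded as step functions of $k/N\in[0,1]$, each converges a.s.\ \emph{uniformly} on $[0,1]$ to the same continuous limit, namely the quantile function $F_{\rho_t}^{-1}$ (with the endpoint conventions $F_{\rho_t}^{-1}(0)=-a(t)$, $F_{\rho_t}^{-1}(1)=a(t)$). For $(\theta^N_k)_k$ this is Corollary~\ref{c.conv.point.process} applied to the arguments, with the edge behavior $\theta^N_1\to-a(t)$, $\theta^N_N\to a(t)$ supplied by Theorem~\ref{t.main.1}; for $(g_t(\lambda_k))_k$ it follows from Wigner's law and the $\mathrm{GUE}^N$ hard edge via Proposition~\ref{p.conv.point.process}, using $\lambda_{\lceil rN\rceil}\to F_{\sigma_1}^{-1}(r)$ a.s.\ and $g_t\big(F_{\sigma_1}^{-1}(r)\big)=F_{\rho_t}^{-1}(r)$. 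Having verified a.s.\ convergence at each $r$ in a countable dense subset of $[0,1]$ (and at the two endpoints) and intersected these almost-sure events, the passage to uniform convergence is the Dini-type theorem for monotone functions with a continuous limit that is used already in the proof of Proposition~\ref{p.conv.point.process}; trapping both step functions uniformly close to $F_{\rho_t}^{-1}$ then forces the maximum of their difference to $0$.

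The main obstacle is exactly this uniformity: Corollary~\ref{c.conv.point.process} and the $\mathrm{GUE}^N$ results only control eigenvalues along sequences with $k(N)/N\to r$, whereas (3) requires a statement uniform over all $k\in\{1,\dots,N\}$ simultaneously. Monotonicity of the ordered spectra reduces this to a routine Dini argument, but one must be careful to intersect the relevant almost-sure events over a dense parameter set and to treat the two edges $r=0,1$ separately --- which is precisely where the hard-edge Theorem~\ref{t.main.1} for the unitary Brownian motion and its $\mathrm{GUE}^N$ counterpart are needed. A minor secondary point is justifying the Weyl-integration decoupling underlying (1) and (2); this is standard for conjugation-invariant ensembles but should be stated with some care.
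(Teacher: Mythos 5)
Your construction of $X^N$ is in substance the same as the paper's: the paper builds a Haar unitary $E^N$ from the eigenvectors of $V_t^N$ (with i.i.d.\ uniformly random phases) and conjugates an independent sample from the $\mathrm{GUE}^N$ eigenvalue density by $E^N$; you invoke the Weyl-integration decoupling to write $V_t^N=W\,\mathrm{diag}(\upsilon^N_k(t))\,W^\ast$ with $W$ Haar and independent of the spectrum, and then conjugate the ordered spectrum of an independent $\mathrm{GUE}^N$ by the same $W$. Both give the conjugation-invariance argument needed for (1) and the shared eigenbasis needed for (2).

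For part (3), however, you take a genuinely different route. The paper reduces $\|f_t(X^N)-V_t^N\|$ to $\max_k|f_t(\mu^N_k)-\upsilon^N_k|$, lets $k(N)$ be a maximizing index, notes the sequence is bounded, and shows every convergent subsequence of $\max_k|\cdot|$ has limit $0$ by extracting a further subsequence with $k(N)/N\to r$ and applying Proposition~\ref{p.conv.point.process} and Corollary~\ref{c.conv.point.process}. You instead linearize via $|e^{i\alpha}-e^{i\beta}|\le|\alpha-\beta|$ (valid here because $g_t(\R)\subset[-a(t),a(t)]\subset(-\pi,\pi)$ and, for large $N$, the arguments $\theta^N_k$ avoid $\pm\pi$ by Theorem~\ref{t.main.1}), view $k\mapsto g_t(\lambda_k)$ and $k\mapsto\theta^N_k$ as monotone step quantile functions of $k/N$, prove a.s.\ pointwise convergence on a countable dense set of $r$ plus the two endpoints (using Proposition~\ref{p.conv.point.process} twice, once for each ensemble), and then upgrade to uniform convergence by the P\'olya--Dini argument for monotone functions with continuous limit. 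The two approaches buy slightly different things: the paper's subsequence argument avoids any appeal to uniform convergence of quantile functions and works directly with the single maximizing index; yours promotes the entire empirical quantile function at once, which is arguably cleaner and makes the role of the hard-edge results at $r=0,1$ and the countable dense set / intersection of almost-sure events completely explicit. Both are correct. You rightly flag that care is needed with the edges and the intersection over a dense set --- these are exactly the points one must check --- and your proof does handle them.
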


\begin{proof} Let $(\upsilon_k^N(t))_{k=1}^N$ denote the eigenvalues of $V^N_t$ in order of increasing argument in $(-\pi,\pi)$, as in Corollary \ref{c.conv.point.process}.  It is almost surely true that $\upsilon_1^N(t)<\cdots<\upsilon_N^N(t)$, and so we work in this event only.  Let $\mathscr{E}_k^N$ denote the eigenspace of the eigenvalue $\upsilon_k^N(t)$.  This space has complex dimension $1$ a.s., and so we may select a unit length vector $E_k^N$ from this space, with phase chosen uniformly at random in $\U_1$, independently for each of $E_1^N,\ldots,E_N^N$.  Then, by orthogonality of distinct eigenspaces, the random matrix $E^N = [\begin{smallmatrix} E^N_1 & \cdots & E^N_N \end{smallmatrix}]$ is in $\U_N$; what's more, since the distribution of $V_t^N$ is invariant under conjugation by unitaries, it follows that $E^N$ is Haar distributed on $\U_N$.

Now, for each $N$, fix a random vector $\boldsymbol\mu^N = (\mu_1^N,\ldots,\mu_N^N)$ independent from $V_t^N$ with joint density $f_{\boldsymbol\mu^N}(x_1,\ldots,x_N)$ equal to the known joint density of eigenvalues of a $\mathrm{GUE}^N$, i.e.\ proportional to
\[ f_{\boldsymbol\mu^N}(x_1,\ldots,x_n) \sim \prod_{j=1}^N e^{-\frac{N}{2} x_j^2}\prod_{1\le j<k\le N} |x_j-x_k|^2. \]
Then we define
\[ X^N \equiv E^N \left[\begin{array}{cccc} \mu_1^N & 0 & \cdots & 0 \\ 0 & \mu_2^N & \cdots & 0 \\ \vdots & \vdots & \ddots & \vdots \\ 0 & 0 & \cdots & \mu^N_N \end{array}\right] (E^N)^\ast. \]
It is well known (cf. \cite{AGZ,Mehta}) that the distribution of $X^N$ is the $\mathrm{GUE}^N$, verifying item (1).  Item (2) holds by construction of $X^N$.  It remains to see that (3) holds true.  Note, since operator norm is invariant under unitary conjugation, we simply have
\begin{equation} \label{e.max.eig.sep} \|f_t(X^N)-V_t^N\|_{\M_N} = \max_{1\le k\le N} |f_t(\mu_k^N) - \upsilon_j^N| \equiv |f_t(\mu_{k(N)}^N) - \upsilon^N_{k(N)}| \end{equation}
where $k(N)$ is an index in $\{1,\ldots,N\}$ that achieves the maximum.  The quantity in \eqref{e.max.eig.sep} has modulus $\le 2$ for each $N$, and so there are convergent subsequences.  We will show that all convergent subsequences converge to $0$, which proves (3) (cf.\ Remark \ref{r.subseq}).

Hence, reindexing as necessary, we assume that $|f_t(\mu_{k(N)}^N)-\upsilon_{k(N)}^N|$ converges.  Now, the sequence $k(N)/N$ is contained in $(0,1]$, and hence it has a convergent subsequence with some limit $r\in[0,1]$.  Again reindexing, we still denote this subsequence as $k(N)$.  By Proposition \ref{p.conv.point.process} in the case of the $\mathrm{GUE}^N$-eigenvalues, we know that $\mu_{k(N)}^N\to F_{\sigma_1}^{-1}(r)$ (where this is interpreted to equal $\pm 2$ when $r=0,1$).  Then by definition (and continuity) of $f_t$, $f_t(\mu_{k(N)}^n) \to \exp(iF_{\rho_t}^{-1}(r)) = \upsilon^\ast(t,r)$.  By Corollary \ref{c.conv.point.process}, we have $\upsilon_{k(N)}^N(t)\to \upsilon^\ast(t,r)$ as well.  This shows the limit of the difference is $0$.

Thus, given any convergent subsequence of $\|f_t(X^N)-V_t^N\|_{\M_N}$, there is a further subsequence that converges to $0$.  It follows that every convergent subsequence of $\|f_t(X^N)-V_t^N\|_{\M_N}$ has limit $0$, and so we conclude that it converges to $0$, as claimed.  \end{proof}

\subsection{Strong Convergence of the Process $(U^N_t)_{t\ge 0}$}

Since the Gaussian unitary ensemble is selfadjoint, we may extend Male's Theorem \ref{t.Male} to continuous functions in independent $\mathrm{GUE}^N$s.

\begin{lemma} \label{l.f(X)} Let $\mx{A}^N = (A^N_1,\ldots,A^N_n)$ be a collection of random matrix ensembles that converges strongly to some $\mx{a}=(a_1,\ldots,a_n)$ in a $W^\ast$-probability space $(\A,\t)$.  Let $\mx{X}^N = (X^N_1,\ldots,X^N_k)$ be independent Gaussian unitary ensembles independent from $\mx{A}^N$, and let $\mx{x} = (x_1,\ldots,x_k)$ be freely independent semicircular random variables in $\A$ all free from $\mx{a}$.  Let $\mx{f}=(f_1,\ldots,f_k)\colon\R\to\C^k$ be continuous functions, and let $\mx{f}(\mx{X}^N) = (f_1(X^N_1),\ldots,f_k(X^N_k))$ and $\mx{f}(\mx{x}) = (f_1(x_1),\ldots,f_k(x_k))$.  Then $(\mx{A}^N,\mx{f}(\mx{X}^N))$ converges strongly to $(\mx{a},\mx{f}(\mx{x}))$.
\end{lemma}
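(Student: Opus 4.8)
The strategy is to deduce the lemma from Male's Theorem~\ref{t.Male} by a polynomial-approximation argument, the key external input being the hard edge for the Gaussian unitary ensemble: since the entries of a $\mathrm{GUE}^N$ have finite fourth moments, $d_H(\mathrm{spec}(X^N_j),[-2,2])\to 0$ almost surely (cf.~\cite{BaiYin1988}). Consequently, with $K:=[-3,3]$, the event that $\mathrm{spec}(X^N_j)\subseteq K$ for all $j$ and all large $N$ has probability one; intersect it with the a.s.\ events ``$\|A^N_j\|\to\|a_j\|$ for all $j$'' and ``$(\mx{A}^N,\mx{X}^N)$ converges strongly to $(\mx{a},\mx{x})$'' (the latter by Theorem~\ref{t.Male}) and work on the resulting a.s.\ event throughout; note also $\mathrm{spec}(x_j)=[-2,2]\subseteq K$. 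Given $\e\in(0,1]$, the Weierstrass theorem provides polynomials $p_1,\dots,p_k$ (with complex coefficients) such that $\sup_K|f_j-p_j|<\e$ for $1\le j\le k$.

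First I would pass the strong convergence of $(\mx{A}^N,\mx{X}^N)$ through these polynomials: since each $p_j(X^N_j)$ is an ordinary polynomial in $X^N_j$, any noncommutative polynomial in $(\mx{A}^N,\mx{p}(\mx{X}^N))$ is a noncommutative polynomial in $(\mx{A}^N,\mx{X}^N)$, so $(\mx{A}^N,\mx{p}(\mx{X}^N))$ converges strongly to $(\mx{a},\mx{p}(\mx{x}))$. Thus for every noncommutative polynomial $Q$ in $n+k$ variables,
\[ \bigl\|Q(\mx{A}^N,\mx{p}(\mx{X}^N))\bigr\|_{\M_N}\longrightarrow \bigl\|Q(\mx{a},\mx{p}(\mx{x}))\bigr\|_{\A}, \qquad \tr\bigl(Q(\mx{A}^N,\mx{p}(\mx{X}^N))\bigr)\longrightarrow \t\bigl(Q(\mx{a},\mx{p}(\mx{x}))\bigr). \]

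Next I would estimate the cost of replacing $\mx{p}$ by $\mx{f}$. On our a.s.\ event one has, for all large $N$, $\|A^N_j\|\le\|a_j\|+1$, $\|f_j(X^N_j)\|=\max_{\lambda\in\mathrm{spec}(X^N_j)}|f_j(\lambda)|\le\sup_K|f_j|$, and likewise $\|p_j(X^N_j)\|\le\sup_K|p_j|\le\sup_K|f_j|+1$; the same bounds hold for $\|f_j(x_j)\|$ and $\|p_j(x_j)\|$. Hence there is a finite $M$ (depending only on the $a_j$ and $f_j$, not on $\e$, $N$, or $Q$) bounding every operator that appears. A routine telescoping estimate for noncommutative monomials then yields, for each fixed $Q$, a constant $C(Q,M)$ with
\[ \bigl\|Q(\mx{A}^N,\mx{f}(\mx{X}^N))-Q(\mx{A}^N,\mx{p}(\mx{X}^N))\bigr\|_{\M_N}\le C(Q,M)\max_{1\le j\le k}\bigl\|f_j(X^N_j)-p_j(X^N_j)\bigr\|_{\M_N}\le C(Q,M)\,\e, \]
using $\|f_j(X^N_j)-p_j(X^N_j)\|=\max_{\lambda\in\mathrm{spec}(X^N_j)}|f_j(\lambda)-p_j(\lambda)|\le\sup_K|f_j-p_j|<\e$ at the last step; the identical bound holds in $(\A,\t)$ for $Q(\mx{a},\mx{f}(\mx{x}))-Q(\mx{a},\mx{p}(\mx{x}))$.

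Finally I would combine these: the triangle inequality gives
\[ \Bigl|\,\|Q(\mx{A}^N,\mx{f}(\mx{X}^N))\|_{\M_N}-\|Q(\mx{a},\mx{f}(\mx{x}))\|_{\A}\,\Bigr| \le 2C(Q,M)\,\e + \Bigl|\,\|Q(\mx{A}^N,\mx{p}(\mx{X}^N))\|_{\M_N}-\|Q(\mx{a},\mx{p}(\mx{x}))\|_{\A}\,\Bigr|; \]
taking $\limsup_{N\to\infty}$ (the last term vanishes on our event) and then letting $\e\downarrow 0$ along a sequence forces $\|Q(\mx{A}^N,\mx{f}(\mx{X}^N))\|\to\|Q(\mx{a},\mx{f}(\mx{x}))\|$, and the same computation with $\tr,\t$ in place of the norms (using $|\tr(\cdot)|\le\|\cdot\|_{\M_N}$, $|\t(\cdot)|\le\|\cdot\|_{\A}$ to bound the error terms) gives convergence in noncommutative distribution. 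Running this over a countable dense set of polynomials $Q$ and a countable sequence $\e\downarrow 0$ produces a single almost-sure event, and continuity of the norm in the coefficients (with the uniform bound $M$) upgrades to all $Q$; this is exactly strong convergence of $(\mx{A}^N,\mx{f}(\mx{X}^N))$ to $(\mx{a},\mx{f}(\mx{x}))$. I expect the only genuine subtlety to be the a priori uniform bound on $\|f_j(X^N_j)\|$ — precisely where the GUE hard edge enters, since a stray escaping eigenvalue could otherwise let $f_j(X^N_j)$ probe $f_j$ far from $[-2,2]$ — together with the harmless bookkeeping of doing the limits in the order $N\to\infty$ then $\e\downarrow 0$ and fixing one null set for all $Q$. (For a $\ast$-distribution statement, as needed when $f_t$ is $\U_1$-valued in Proposition~\ref{p.GUE.approx.Ut}, one simply applies the lemma to the list $(f_1,\bar f_1,\dots,f_k,\bar f_k)$, since $f_j(X^N_j)^\ast=\bar f_j(X^N_j)$ and $f_j(x_j)^\ast=\bar f_j(x_j)$.)
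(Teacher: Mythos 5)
Your proof is correct and follows essentially the same route as the paper: approximate each $f_j$ uniformly on a compact interval containing $[-2,2]$ by a polynomial, pass strong convergence of $(\mx{A}^N,\mx{X}^N)$ through polynomials via Male's theorem, and close the gap with a telescoping estimate on monomials under a uniform operator-norm bound. You correctly identify the one genuine subtlety — that controlling $\|f_j(X^N_j)-p_j(X^N_j)\|$ requires $\mathrm{spec}(X^N_j)$ to eventually lie in the interval where the Weierstrass approximation holds — which the paper handles implicitly (the needed containment already follows from the strong convergence $(\mx{A}^N,\mx{X}^N)\to(\mx{a},\mx{x})$, so the separate Bai--Yin citation is redundant but harmless); your choice to approximate on $K=[-3,3]$ rather than $[-2,2]$ makes this step cleaner than the paper's exposition.
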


\begin{proof} We begin with the case $k=1$.  If $p$ is any polynomial, by Theorem \ref{t.Male}, $(\mx{A}^N,p(X^N_1))$ converges strongly to $(\mx{a},p(x_1))$ by definition.  Now, let $\e>0$, and fix a noncommutative polynomial $P$ in $n+1$ variables.  Then $P(\mx{a},y)$ is a finite sum of monomials, each of the form
\[ Q_0(\mx{a})yQ_1(\mx{a})y\cdots Q_{d-1}(\mx{a})yQ_d(\mx{a}) \]
for some noncommutative polynomials $Q_0,\ldots,Q_d$ and nonnegative integers $d$.  Let $d_P$ be the ``degree'' of $P$: the maximum number of $Q_k(\mx{a})$ terms that appears in any monomial in the above expansion of $P(\mx{a},y)$.  Let $M = 1+ $ the sum of all the products $\|Q_0(\mx{a})\|\cdots \|Q_d(\mx{a})\|$ over all monomial terms appearing in $P$.  By the Weierstrass approximation theorem, there is a polynomial $p$ in one variable so that
\begin{equation} \label{e.p-f1} \|p-f_1\|_{L^\infty[-2,2]} < \frac{\e}{8d_PM(1+\|f_1\|_{L^\infty[-2,2]})^{d_P}}. \end{equation}
It follows that, for small enough $\e>0$, we also have $\|p\|_{L^\infty[-2,2]} \le 1+\|f_1\|_{L^\infty[-2,2]}$.
Now we break up the difference in the usual manner,
\begin{equation} \label{e.three-terms}\begin{aligned}  \left|\|P(\mx{A}^N,f_1(X^N_1))\| - \|P(\mx{a},f_1(x_1))\|\right|
 &\le   \left|\|P(\mx{A}^N,f_1(X^N_1))\| - \|P(\mx{A}^N,p(X^N_1))\|\right| \\
& \qquad  + \left|\|P(\mx{A}^N,p(X^N_1))\| - \|P(\mx{a},p(x_1))\|\right| \\
& \qquad  + \left|\|P(\mx{a},p(x_1))\| - \|P(\mx{a},f(x_1))\|\right|.
\end{aligned} \end{equation}
By the known strong convergence of $(\mx{A}^N,p(X_1^N))$ to $(\mx{a},p(x_1))$, the middle term is $<\frac{\e}{4}$ for all sufficiently large $N$.  For the first and third terms, we use the reverse triangle inequality; in the third term this gives
\[  \left|\|P(\mx{a},p(x_1))\| - \|P(\mx{a},f(x_1))\|\right| \le \|P(\mx{a},p(x_1))-P(\mx{a},f(x_1))\|. \]
Let $y=p(x_1)$ and $z=f_1(x_1)$.  We may estimate the norm of the difference using the triangle inequality summing over all monomial terms; then we have a sum of terms of the form
\begin{equation} \label{e.int.term.1} \|Q_0(\mx{a})yQ_1(\mx{a})y\cdots Q_{d-1}(\mx{a})yQ_d(\mx{a}) - Q_0(\mx{a})zQ_1(\mx{a})z\cdots Q_{d-1}(\mx{a})zQ_d(\mx{a})\|. \end{equation}
By introducing intermediate mixed terms of the form $Q_0(\mx{a})y\cdots Q_{k-1}(\mx{a})yQ_k(\mx{a})z\cdots Q_{d-1}(\mx{a})zQ_d(\mx{a})$ to give a telescoping sum, we can estimate the term in \eqref{e.int.term.1} by
\begin{equation} \label{e.est.terms.2} \|Q_0(\mx{a})\|\cdots\|Q_d(\mx{a})\|\sum_{k=1}^d \|y\|^{k-1}\|z\|^{d-k}\|y-z\|. \end{equation}
Since $\|y\|=\|p(x_1)\| = \|p\|_{L^\infty[-2,2]} \le 1+\|f_1\|_{L^\infty[-2,2]}$ and $\|z\| = \|f_1(x_1)\| \le 1+\|f_1\|_{L^\infty[-2,2]}$, each term in the previous sum is bounded by
\[ \|y\|^{k-1}\|z\|^{d-k} \le (1+\|f_1\|_{L^\infty[-2,2]})^{d-1} \le  (1+\|f_1\|_{L^\infty[-2,2]})^{d_P}. \]
Since $\|y-z\| = \|p-f_1\|_{L^\infty[-2,2]}$, combining this with \eqref{e.p-f1} shows that the third term in \eqref{e.three-terms} is $<\frac{\e}{4}$ for all large $N$.

The first term in \eqref{e.three-terms} is handled in an analogous fashion, with the caveat that the prefactor in \eqref{e.est.terms.2} is replaced by $\|Q_0(\mx{A}^N)\|\cdots\|Q_d(\mx{A}^N)\|$.  Here we use the assumption of strong convergence of $\mx{A}^N\to \mx{a}$ to show that, for all sufficiently large $N$,
\[\|Q_0(\mx{A}^N)\|\cdots\|Q_d(\mx{A}^N)\| \le \max\{1,2\cdot \|Q_0(\mx{a})\|\cdots\|Q_d(\mx{a})\|\}. \]
Then we see that the first term in \eqref{e.three-terms} is $<\frac{\e}{2}$ for all large $N$, and so we have bounded the sum $<\e$ for all large $N$, concluding the proof of the lemma in the case $k=1$.

Now suppose we have verified the conclusion of the lemma for a given $k$.  We proceed by induction.  Taking $(\mx{A}^N,f_1(X^N_1),\ldots,f_k(X^N_k))$ as our new input vector, since $f_{k+1}(X^N_{k+1})$ is independent from all previous terms, the induction hypothesis and the preceding argument in the case $k=1$ give strong convergence of the augmented vector $(\mx{A}^N,f_1(X^N_1),\ldots,f_k(X^N_k),f_{k+1}(X^N_{k+1}))$ as well.  Hence, the proof is complete by induction. \end{proof}

This finally brings us to the proof of Theorem \ref{t.main.2}.

\begin{proof}[Proof of Theorem \ref{t.main.2}] As above, let $\mx{A}^N = (A^N_1,\ldots,A^N_n)$ and let $\mx{a} = (a_1,\ldots,a_n)$ be the strong limit.  By reindexing the order of the variables in the noncommutative polynomial $P$ appearing in the definition of strong convergence, it suffices to prove the theorem in the case of time-ordered entries: $U^N_{t_1},\ldots,U^N_{t_k}$ with $t_1\le t_2\le\cdots\le t_k$.  What's more, we may assume without loss of generality that the time increments $s_1=t_1,s_2=t_2-t_1,\ldots,s_k=t_k-t_{k-1}$ are all in $[0,4)$.  Indeed, if we know the theorem holds in this case, then for a list of ordered times with some gaps $4$ or larger, we may introduce intermediate times until all gaps are $<4$; then the restricted theorem implies strong convergence for this longer list of marginals, which trivially implies strong convergence for the original list.

Now, set $V_{s_1}^N = U^N_{t_1}$, and $V_{s_j}^N = (U^N_{t_{j-1}})^\ast U^N_{t_j}$ for $2\le j\le k$.  As discussed in Section \ref{section UBM def}, these increments of the process are independent, and $V_{s_j}^N$ has the same distribution as $U_{s_j}^N$.  Hence, by Proposition \ref{p.GUE.approx.Ut}, there are $k$ independent $\mathrm{GUE}^N$s $X^N_1,\ldots,X^N_k$, and continuous functions $f_{s_j}\colon\R\to\C$, so that $\|f_{s_j}(X^N_j) - V^N_{s_j}\|_{\M_N}\to 0$ as $N\to\infty$.  Since the $V_{s_j}^N$ are all independent from $\mx{A}^N$, so are the $X^N_j$.  Hence, by Lemma \ref{l.f(X)}, taking $x_1,\ldots,x_k$ freely independent semicircular random variables all free from $\mx{a}$, it follows that
\[ (\mx{A}^N,f_{s_1}(X^N_1),\ldots,f_{s_k}(X^N_k))\text{ converges strongly to }(\mx{a},f_{s_1}(x_1),\ldots,f_{s_k}(x_k)). \]
By the definition of the mapping $f_s$ (cf.\ \eqref{e.def.ft}), $f_{s_j}(x_j)$ has distribution $\nu_{s_j}$, and as all variables in sight are free, $(\mx{a},f_{s_1}(x_1),\ldots,f_{s_k}(x_k))$ has the same distribution as $(\mx{a},v_{s_1},\ldots,v_{s_k})$ where $(v_s)_{s\ge 0}$ is a free unitary Brownian motion, freely independent from $\mx{a}$.

It now follows, since $\|f_{s_j}(X_j^N)-V_{s_j}^N\|_{\M_N}\to 0$, that
\[ (\mx{A}^N,V_{s_1}^N,\ldots,V_{s_k}^N)\text{ converges strongly to }(\mx{a},v_{s_1},\ldots,v_{s_k}). \]
(The proof is very similar to the proof of Lemma \ref{l.f(X)}.)  Finally, we can recover the original variables $U^N_{t_j} = V^N_{s_1}V^N_{s_2}\cdots V^N_{s_j}$. Therefore
\[ (\mx{A}^N,U_{t_1}^N,\ldots,U_{t_k}^N)\text{ converges strongly to }(\mx{a},v_{s_1},v_{s_1}v_{s_2}\ldots,v_{s_1}v_{s_2}\cdots v_{s_k}). \]
The discussion at the end of Section \ref{section free prob} shows that $(v_{s_1},v_{s_1}v_{s_2},\ldots,v_{s_1}v_{s_2}\cdots v_{s_k})$ has the same distribution as $(u_{t_1},u_{t_2},\ldots,u_{t_k})$ where $(u_t)_{t\ge 0}$ is a free unitary Brownian motion in the $W^\ast$-algebra generated by $(v_s)_{s\ge 0}$, and is therefore freely independent from $\mx{a}$.  This concludes the proof.  \end{proof}

\newpage

\section{Application to the Jacobi Process \label{section application}}

In this final section we combine our main Theorem \ref{t.main.2} with some of the results of our earlier paper \cite{CK2014}, to show that the Jacobi process (cf.\ \eqref{e.Jacobi.ens.def} and \eqref{e.Jacobi.process}) has hard edges that evolve with finite propagation speed.

There are three classical Hermitian Gaussian ensembles that have been well studied.  The first is the Gaussian Unitary Ensemble described in detail above, whose analysis was initiated by Wigner \cite{Wigner} and began random matrix theory.  The second is the {\em Wishart Ensemble}, also known (through its applications in statistics) as a {\em sample covariance matrix}.  Let $a\ge1$, and let $X=X^N$ be an $N\times \lfloor aN \rfloor$ matrix all of whose entries are independent normal random variables of variance $\frac1N$; then $W = XX^\ast$ is a Wishart ensemble with parameter $a$.  As $N\to\infty$, its empirical spectral distribution converges almost surely to a law known as the {\em Marchenko-Pastur distribution}; this was proved in \cite{MarchenkoPastur1967}.  As with the Gaussian Unitary Ensemble, it also has a hard edge, and the largest eigenvalue when properly renormalized has the Tracy-Widom law.

The third Hermitian Gaussian ensemble is the {\em Jacobi Ensemble}.  Let $W_a$ and $W_b'$ be independent Wishart ensembles of parameters $a,b\ge1$.  Then it is known that $W_a+W_b'$ is a Wishart ensemble of parameter $a+b$, and is a.s.\ invertible (cf.\ \cite[Lemma 2.1]{Collins2005}).  The associated Jacobi Ensemble is
\begin{equation} \label{e.Jacobi.ens.def} J=J_{a,b} = (W_a+W_b')^{-\frac12}W_a(W_a+W_b')^{-\frac12} . \end{equation}
Such matrices have been studied in the statistics literature for over thirty years; they play a key role in MANOVA (multivariate analysis of variance) and are sometimes simply called MANOVA matrices.  The joint law of eigenvalues is explicitly known, but the large-$N$ limit is notoriously harder than the Gaussian Unitary and Wishart Ensembles.  In \cite{Collins2005}, the present first author made the following discovery which led to a new approach to the asymptotics of the ensemble: its joint law can be described by a product of randomly rotated projections, as follows.  (For the sake of making the statement simpler, we assume $a,b$ are such that $aN$ and $bN$ are integers.)

\begin{theorem}[\cite{Collins2005}, Theorem 2.2] \label{t.Collins.Jacobi} Let $J_{a,b}=J_{a,b}^N$ be an $N\times N$ Jacobi ensemble with parameters $a,b\ge1$.  Let $P,Q\in\M_{(a+b)N}$ be (deterministic) orthogonal projections with $\mathrm{rank}(P) = bN$ and $\mathrm{rank}(Q) = N$.  Let $U\in\U_{(a+b)N}$ be a random unitary matrix sampled from the Haar measure.  Then $Q U^\ast PU Q$, viewed as a random matrix in $\M_N$ via the unitary isomorphism $\M_N \cong Q\M_{(a+b)N}Q$, has the same distribution as $J_{a,b}$.  \end{theorem}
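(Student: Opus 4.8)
The plan is to exhibit both $J_{a,b}$ and the compression $QU^*PUQ$ as, distributionally, the \emph{same} object --- the Gram matrix of a uniformly random orthonormal $N$-frame in $\C^{(a+b)N}$ --- using as a bridge the polar decomposition of a single rectangular Gaussian matrix. \emph{Step 1 (a joint Gaussian model for the Wisharts).} Let $\Theta$ be an $N\times(a+b)N$ matrix with i.i.d.\ complex Gaussian entries of variance $\tfrac1N$, and split its columns as $\Theta=[\,\Theta_a\ \ \Theta_b\,]$ with $\Theta_a$ of size $N\times aN$ and $\Theta_b$ of size $N\times bN$. Then $W_a:=\Theta_a\Theta_a^*$ and $W_b':=\Theta_b\Theta_b^*$ are independent Wishart ensembles of parameters $a$ and $b$, and $W_a+W_b'=\Theta\Theta^*$ is a.s.\ invertible (a.s.\ $\Theta$ has full row rank since $(a+b)N\ge N$). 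Write the polar decomposition $\Theta=(\Theta\Theta^*)^{1/2}V$ with $V=(\Theta\Theta^*)^{-1/2}\Theta$ an $N\times(a+b)N$ co-isometry ($VV^*=I_N$), and split $V=[\,V_a\ \ V_b\,]$ conformally; then $V_a=(W_a+W_b')^{-1/2}\Theta_a$, so
\[ J_{a,b}=(W_a+W_b')^{-1/2}W_a(W_a+W_b')^{-1/2}=V_aV_a^{*},\qquad I_N-J_{a,b}=V_bV_b^{*}. \]

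\emph{Step 2 (the law of $V$).} The law of $\Theta$ is invariant under $\Theta\mapsto\Theta W$ for every $W\in\U_{(a+b)N}$, and $\Theta W=(\Theta\Theta^*)^{1/2}(VW)$ is again a polar decomposition, so $V$ is invariant under $V\mapsto VW$; hence $V$ is uniformly distributed on the compact manifold of orthonormal $N$-frames in $\C^{(a+b)N}$. Equivalently, $V$ has the law of the matrix formed by any fixed set of $N$ rows of a Haar-distributed $U\in\U_{(a+b)N}$, and then $V_a$ (resp.\ $V_b$) has the law of the submatrix of $U$ in those $N$ rows and any fixed $aN$ (resp.\ $bN$) columns.

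\emph{Step 3 (matching $QU^*PUQ$).} First I would reduce to standard projections: since Haar measure on $\U_{(a+b)N}$ is bi-invariant, the law of $QU^*PUQ$ depends on $P,Q$ only through their ranks, so one may take $Q$ to be the projection onto a fixed set of $N$ coordinates and $P$ onto a fixed set of $bN$ coordinates; and since $QU^*PUQ$ is invariant under conjugation by unitaries of $\mathrm{range}(Q)$, the abstract identification $Q\M_{(a+b)N}Q\cong\M_N$ is irrelevant to its distribution. A direct computation then gives that $QU^*PUQ$, read in $\M_N$, equals the Gram matrix $\Xi^*\Xi$, where $\Xi$ is the submatrix of $U$ with rows in the $P$-block and columns in the $Q$-block. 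Using transpose-invariance of Haar measure ($U^{\top}$ is again unitary, so $U^{\top}\overset{d}{=}U$) and conjugation-invariance ($\overline U\overset{d}{=}U$) to move $\Xi$ into the position occupied by $V_b$ in Step 2, Step 1 yields $\Xi^*\Xi\overset{d}{=}V_bV_b^{*}=I_N-J_{a,b}$, and the elementary identity $I_N-J_{a,b}=(W_a+W_b')^{-1/2}W_b'(W_a+W_b')^{-1/2}\overset{d}{=}J_{b,a}$ (exchanging the roles of $W_a$ and $W_b'$) identifies this with a Jacobi ensemble; choosing the ranks of $P$ and $Q$ so as to match the parameter conventions in \eqref{e.Jacobi.ens.def} gives the statement on the nose.

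\emph{Where the work is.} The linear algebra in Steps 1--3 is routine. The care-intensive parts are the reductions in Step 3 --- passing from arbitrary projections of the prescribed ranks to coordinate projections, checking that $Q\M Q\cong\M_N$ does not affect the law, and keeping the row/column and $a\leftrightarrow b$ bookkeeping straight --- together with the almost-sure statements (full row rank of $\Theta$, hence invertibility of $W_a+W_b'$ and existence of the polar decomposition) that make every expression above well-defined. I expect the transpose/$a\leftrightarrow b$ bookkeeping in Step 3 to be the main place where a naive attempt goes astray.
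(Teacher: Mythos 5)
The paper imports Theorem~\ref{t.Collins.Jacobi} from \cite{Collins2005} and does not reprove it, so there is no in-text proof to compare against. Your argument is the standard (and essentially correct) one: build both Wisharts from a single $N\times(a+b)N$ Gaussian $\Theta=[\Theta_a\ \ \Theta_b]$, so that $W_a+W_b'=\Theta\Theta^*$ and the co-isometry $V=(\Theta\Theta^*)^{-1/2}\Theta$ from the polar decomposition satisfies $J_{a,b}=V_aV_a^*$ and $I_N-J_{a,b}=V_bV_b^*$; then observe, by right-invariance of the Gaussian law, that $V$ is uniform on the Stiefel manifold of orthonormal $N$-frames in $\C^{(a+b)N}$, hence matches in law an $N$-row block of a Haar unitary. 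Steps 1 and 2 are exactly right, as is the reduction to coordinate projections and the identification $QU^*PUQ\cong\Xi^*\Xi$ with $\Xi=PUQ$.

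Where you should be less tentative is precisely the $a\leftrightarrow b$ bookkeeping you flag at the end: your computation actually exposes an inconsistency between the statement of Theorem~\ref{t.Collins.Jacobi} and the definition \eqref{e.Jacobi.ens.def}. With $\mathrm{rank}(P)=bN$ as stated, $\Xi^*$ is $N\times bN$ and therefore has the law of $V_b$ (not $V_a$); your chain $\Xi^*\Xi\overset{d}{=}V_bV_b^*=I_N-J_{a,b}\overset{d}{=}J_{b,a}$ is correct and shows that the compression has the law of $J_{b,a}$, which coincides with $J_{a,b}$ only when $a=b$. (Sanity check: for $b$ large, $P$ is nearly the identity, so $QU^*PUQ\approx I_N$, while $J_{a,b}\approx 0$ because $W_b'$ dominates the normalization; these disagree, whereas $J_{b,a}\approx I_N$ agrees.) So do not hedge with ``choosing the ranks to match the conventions'': say outright that, with the definition \eqref{e.Jacobi.ens.def} as written, the hypothesis should read $\mathrm{rank}(P)=aN$, in which case $\Xi^*$ matches $V_a$ and one gets $QU^*PUQ\overset{d}{=}V_aV_a^*=J_{a,b}$ directly, with no need to pass through $I_N-J_{a,b}$.

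Two minor remarks. You invoke both transpose- and conjugation-invariance of Haar measure; only $U^*\overset{d}{=}U$ together with left/right invariance (to reposition the relevant block) is actually needed. And your observation that the identification $Q\M_{(a+b)N}Q\cong\M_N$ does not affect the law is correctly justified, since both $J_{a,b}$ and the compression have $\U_N$-conjugation-invariant distributions.
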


Given two closed subspaces $\mathbb{V},\mathbb{W}$ of a Hilbert space $\mathbb{H}$, if $P\colon\mathbb{H}\to\mathbb{V}$ and $Q\colon\mathbb{H}\to\mathbb{W}$ are the orthogonal projections, then the operator $QPQ$ is known as the {\em operator valued angle} between the two subspaces.  (Indeed, in the finite-dimensional setting, the eigenvalues of $QPQ$ are trigonometric polynomials in the principal angles between the subspaces $\mathbb{V}$ and $\mathbb{W}$.)  Thus, the law of the Jacobi ensemble records all the remaining information about the angles between two uniformly randomly rotated subspaces of fixed ranks.  These observations were used to make significant progress in understanding the Jacobi Ensemble in statistical applications (cf.\ \cite{Johnstone2008}), and to generalize many of these results to the full expected universality class (beyond Gaussian entries) in the limit (cf.\ \cite{ErdosFarrell2013}).

In terms of the large-$N$ limit: letting $\alpha = \frac{b}{a+b}$ and $\beta = \frac{1}{a+b}$, we have $\tr P = \alpha$ and $\tr Q = \beta$ fixed as $N$ grows, and therefore there are limit projections $p,q$ of these same traces. The chosen Haar distributed unitary matrices converge in noncommutative distribution to a Haar unitary operator $u$ freely independent from $p,q$, and so the empirical spectral distribution of $J_{a,b}$ converges to the law of $qu^\ast p u q$, which was explicitly computed in \cite{VDN} as an elementary example of free multiplicative convolution:
\[ \mathrm{Law}_{qu^\ast p u q} = (1-\min\{\alpha,\beta\})\delta_0 + \max\{\alpha+\beta-1,0\}\delta_1 + \frac{\sqrt{(r_+-x)(x-r_-)}}{2\pi x(1-x)}\1_{[r_-,r_+]}\,dx, \]
where $r_{\pm} = \alpha+\beta-2\alpha\beta\pm2\sqrt{\alpha\beta(1-\alpha)(1-\beta)}$.  Furthermore, it was shown in \cite{Johnstone2008} that the Jacobi Ensemble has a hard edge, the rate of convergence of the largest eigenvalue is $N^{-2/3}$ (as with the Gaussian Unitary and Wishart Ensembles), and the rescaled limit distribution of the largest eigenvalue is the Tracy-Widom law of \cite{TracyWidom}.

Simultaneously to these developments, Voiculescu \cite{Voiculescu1999} introduced {\em free liberation}.  Given two subalgebras $A,B$ of a $W^\ast$-probability space $(\mathscr{A},\tau)$ and a Haar unitary operator $u\in\mathscr{A}$ that is freely independent from $A,B$, the rotated subalgebra $u^\ast A u$ is freely independent from $B$.  If $(u_t)_{t\ge 0}$ is a free unitary Brownian motion freely independent from $A,B$, it is not generally true that $u_t^\ast A u_t$ is free from $B$ for any finite $t$ (in particular when $t=0$ we just have $A,B$), but since the (strong operator) limit as $t\to\infty$ of $u_t$ is a Haar unitary, this process ``liberates'' $A$ and $B$.  This concept was used to define several important regularized versions of measures associated to free entropy and free information theory, and to this days plays an important role in free probability theory.  The special case that $A,B$ are algebras generated by two projections has been extensively studied \cite{Demni2008,Demni2015,DemniHamdiHmidi2012,DemniHmidi2013,DemniZani2009,HiaiMiyamotoUeda2009,HiaiUeda2008,HiaiUeda2009}, as the best special case where one can hope to compute all quantities fairly explicitly.

In the first and third authors' paper \cite[Section 3.2]{CK2014}, the following was proved.

\begin{theorem}[\cite{CK2014}, Lemmas 3.2--3.6] \label{t.CK2014} Let $p,q$ be orthogonal projections with traces $\alpha,\beta$, and let $(u_t)_{t\ge 0}$ be a free unitary Brownian motion freely independent from $p,q$.  Let $\mu_t = \mathrm{Law}_{qu_t^\ast p u_t q}$.  Then
\[ \mu_t = (1-\min\{\alpha,\beta\})\delta_0 + \max\{\alpha+\beta-1,0\}\delta_1 + \widetilde{\mu}_t \]
where $\widetilde{\mu}_t$ is a positive measure (of mass $\min\{\alpha,\beta\}-\max\{\alpha+\beta-1,0\}$).  Let $I_1,I_2$ be two disjoint open subintervals of $(0,1)$.  If $\supp \widetilde{\mu}_{t_0}\subset I_1\sqcup I_2$ for some $t_0\ge 0$, then $\supp\widetilde{\mu}_t\subset I_1\sqcup I_2$ for $|t-t_0|$ sufficiently small; moreover, $\widetilde{\mu}_t(I_1)$ and $\widetilde{\mu}_t(I_2)$ do not vary with $t$ close to $t_0$.

If $\widetilde{\mu}_t$ has a continuous density on $(0,1)$ for $t>0$, and $x_{t_0}\in(0,1)$ is a boundary point of $\supp\widetilde{\mu}_{t_0}$, then for $|t-t_0|$ sufficiently small there is a $C^1$ function $t\mapsto x(t)$  with $x(t_0) = x_{t_0}$ so that $x(t)$ is a boundary point of $\supp\widetilde{\mu}_{t_0}$.

Finally, in the special case $\alpha=\beta=\frac12$, for all $t>0$, $\widetilde{\mu}_t$ possesses a continuous density which is analytic on the interior of its support.
\end{theorem}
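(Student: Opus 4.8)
The plan is to realize the free Jacobi process $j_t:=qu_t^\ast pu_tq$ as the solution of a free stochastic differential equation, extract from it a closed scalar evolution equation for the Cauchy transform $G_t(z)=\tau\big[(z-j_t)^{-1}\big]$ ($z\in\C\setminus[0,1]$), and then read off all four statements from the method of characteristics for that equation. Starting from the defining free SDE $du_t=iu_t\,dx_t-\tfrac12u_t\,dt$ of the free unitary Brownian motion and applying the free It\^o formula of \cite{BianeSpeicher1998}, one obtains a free SDE for $j_t$ in $(\A,\tau)$ whose drift and diffusion coefficients are low-degree polynomials in $j_t$ with scalar coefficients given by traces; this is the free Markov property coming from the independent increments $u_s^{-1}u_t$. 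Taking the resolvent and using the trace-preserving conditional expectation onto $W^\ast(q)$ to evaluate the It\^o corrections, $G_t$ is seen to satisfy a first-order quasilinear PDE of conservation-law type $\partial_t G_t(z)=\partial_z\Phi\big(z,G_t(z)\big)$ with $\Phi$ an explicit rational function — the free-probabilistic analogue of Biane's complex Burgers equation for $\nu_t$. Two structural features will be used repeatedly: the equation is covariant under the substitution $p\leftrightarrow p^\perp$ (which relates $j_t$ to $qu_t^\ast p^\perp u_tq=q-j_t$ and hence gives a reflection symmetry of the law of $j_t$ compressed to $q\A q$), and since $0\le j_t\le 1$ the measure $\mu_t$ is always supported in $[0,1]$.

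For statement about the atoms: at $t=0$, $j_0=qpq$ with $p,q$ free, so $\mu_0$ is exactly the free multiplicative convolution recorded in \cite{VDN} and quoted in the excerpt, whose atoms have masses $1-\min\{\alpha,\beta\}$ at $0$ and $\max\{\alpha+\beta-1,0\}$ at $1$; these masses equal $\tau(q^\perp)+\tau\big(q\wedge u_0^\ast p^\perp u_0\big)$ and $\tau\big(q\wedge u_0^\ast pu_0\big)$ respectively, by the general-position formula for free projections. That these atom masses are independent of $t$ follows from the conservation-law form of the PDE: writing the residues of $G_t$ at $0$ and $1$ as small contour integrals of $G_t$ and differentiating in $t$, the integrand becomes an exact $z$-derivative of the (meromorphic) flux, so the residues — that is, the atom masses — are constant; no mass enters or leaves the endpoints. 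Since $\mu_t$ is a probability measure, subtracting these two atoms leaves the positive measure $\widetilde\mu_t$ of the stated mass $\min\{\alpha,\beta\}-\max\{\alpha+\beta-1,0\}$.

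For the local support-stability and local constancy of masses: by the method of characteristics $G_t$ continues analytically across any real interval disjoint from the closed support of $\widetilde\mu_t$, and $\supp\widetilde\mu_t$ is precisely the set of real points onto which characteristics focus; continuity of the characteristic flow in $t$ then shows that if $\supp\widetilde\mu_{t_0}\subset I_1\sqcup I_2$ with $\overline{I_1}\cap\overline{I_2}=\emptyset$, the same inclusion holds for $|t-t_0|$ small. To see the masses are locally constant, fix a contour $\Gamma_1\subset\C\setminus[0,1]$ enclosing $I_1$ but not $I_2$ nor $0,1$; it lies in the domain of analyticity of $G_t$ for all $t$ near $t_0$, so $\widetilde\mu_t(I_1)=\tfrac{1}{2\pi i}\oint_{\Gamma_1}G_t(z)\,dz$, and $\tfrac{d}{dt}\widetilde\mu_t(I_1)=\tfrac{1}{2\pi i}\oint_{\Gamma_1}\partial_z\Phi\big(z,G_t(z)\big)\,dz=0$ since the integrand is an exact derivative around a closed curve; the same applies to $\widetilde\mu_t(I_2)$. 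For the moving-edge statement, once $\widetilde\mu_t$ has a continuous density for $t>0$, a boundary point $x_{t_0}\in(0,1)$ of its support is the image of a critical point of the inverse of the characteristic flow map, i.e.\ a solution of a system $H_t(\zeta)=x$, $\partial_\zeta H_t(\zeta)=0$ with $H_t$ real-analytic in $(t,\zeta)$, and at a non-degenerate (square-root) edge $\partial_\zeta^2 H_{t_0}(\zeta)\ne0$ there; the implicit function theorem then yields a $C^1$ (in fact real-analytic) curve $t\mapsto(\zeta(t),x(t))$ with $x(t_0)=x_{t_0}$, and $x(t)$ is the asserted moving edge of $\supp\widetilde\mu_t$.

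For the symmetric case $\alpha=\beta=\tfrac12$: here $\mu_t=\tfrac12\delta_0+\widetilde\mu_t$ (no atom at $1$) and the $p\leftrightarrow p^\perp$ covariance becomes a genuine symmetry of the compressed law, which forces the flux $\Phi$ to degenerate to a form whose characteristic system can be integrated in closed form — the computation parallels the $t=\infty$ free multiplicative convolution of \cite{VDN} but carries the extra exponential factor coming from the free unitary Brownian motion, so that the inverse flow map is an explicit algebraic function built from the function $\kappa_t$ of Theorem \ref{t.nu.t}. From this explicit continuation one reads off that $G_t$ has boundary values on $(0,1)$ realizing $\widetilde\mu_t$ as a continuous density $\tfrac1\pi\Im G_t(\,\cdot\,+i0)$ that is real-analytic wherever it is positive, i.e.\ on the interior of its support. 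The main obstacle throughout is the a priori control of the characteristic solution: one must verify that the characteristics emanating from the relevant part of $\C^+$ reach the real axis before the flow map's Jacobian vanishes (no interior shocks producing spurious singularities), and that the relevant edge critical points are non-degenerate for $t>0$; this regularization-by-positive-time is exactly what makes the implicit function theorem argument for the moving edges and the density statement work, whereas once the characteristic picture is in hand the atom, support, and density conclusions reduce to complex-analytic bookkeeping.
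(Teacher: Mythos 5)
This statement is Theorem~\ref{t.CK2014}, which the paper does not prove: it is imported verbatim from the authors' earlier work \cite{CK2014} (Lemmas 3.2--3.6), and the only description given here of that proof is the remark that it ``involved a combination of free probability, complex analysis, and PDE techniques,'' with an alternative Szeg\H{o}-transform argument for the case $\alpha=\beta=\tfrac12$ credited to \cite{IzumiUeda}. So there is no internal proof to compare you against, only this characterization. Your sketch --- free SDE for $j_t$, a closed quasilinear evolution for the Cauchy transform $G_t$, method of characteristics, contour integrals for the atoms, implicit function theorem for moving edges, and the $\kappa_t$-based explicit solution when $\alpha=\beta=\tfrac12$ --- is consistent with that characterization, and the reduction to the $\nu_t$ picture via the Szeg\H{o} correspondence is exactly the shortcut the paper's remark attributes to \cite{IzumiUeda}. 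In that sense you have reconstructed the intended route.

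However, there is a genuine gap, and you name it yourself: you assert a conservation-law form $\partial_t G_t=\partial_z\Phi(z,G_t)$ with a \emph{closed} scalar flux, and all four conclusions then rest on a priori control of the characteristic flow (no interior shocks, non-degenerate square-root edges for $t>0$). You flag this as ``the main obstacle'' and do not close it. But this is precisely the content of those Lemmas in \cite{CK2014}; asserting it and deferring it is not a proof. In particular, (i) that the It\^o correction terms in $\frac{d}{dt}\tau[(z-j_t)^{-1}]$ actually reduce to functions of $G_t,G_t'$ alone and assemble into an exact $z$-derivative needs to be established, not stated; (ii) the implicit-function-theorem step for the moving edge is vacuous without verifying $\partial_\zeta^2 H_{t_0}\ne 0$, which is the hard analytic point the paper's remark says \cite{IzumiUeda} could not extend past $\alpha=\beta=\tfrac12$; and (iii) your derivation of the initial atom masses invokes ``the general-position formula for free projections'' at $t=0$, but the theorem's hypotheses only ask $u$ to be free from $\{p,q\}$, not that $p$ and $q$ be free from each other --- the correct anchoring is at $t=\infty$, where $u_\infty$ is a Haar unitary and $u_\infty^\ast pu_\infty$ \emph{is} free from $q$, combined with your conservation argument to propagate the masses back. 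So: right framework, honest about the missing heart, but the heart is missing.
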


\begin{remark} \begin{itemize} \item[(1)] It is expected that the final statement, regarding the existence of a continuous density, holds true for all $\alpha,\beta\in(0,1)$; at present time, this is only known for $\alpha=\beta=\frac12$.  Nevertheless, the ``islands stay separated'' result holds in general.
\item[(2)] Our method of proof of the regularity of $\widetilde{\mu}_t$ involved a combination of free probability, complex analysis, and PDE techniques.  In the preprint \cite{IzumiUeda}, the authors partly extended this framework beyond the $\alpha=\beta=\frac12$ case; but they were still not able to prove continuity of the measure.  They did, however, give a much simpler proof of the result in the case $\alpha=\beta=\frac12$: here, $\widetilde{\mu}_t$ can be described as the so-called Szeg\H{o} transform (from the unit circle to the unit interval) of the law of $v_0u_t$, where $v_0$ is the inverse Szeg\H{o} transform of the law of $qpq$.  Via this description, the regularity result is an immediate consequence of Theorem \ref{t.nu.t} above.
\item[(3)] Let us note that $\alpha=\beta=\frac12$ corresponds to $a=b=1$, meaning the ``square'' Jacobi ensemble.  This is, of course, the case that is least interesting to statisticians: in MANOVA problems the data sets are typically time series, where there are many more samples than detection sites, meaning that $a,b\ggg 1$.  In fact, it is generally questioned whether the Jacobi Ensemble is a realistic regime for real world applications, rather than building the Wishart Ensembles out of $N\times M$ Gaussian matrices where $\frac{M}{N}\to\infty$.
\end{itemize}
\end{remark}

Thus, it is natural to consider the corresponding finite-$t$ deformation of the Jacobi Ensemble.    The {\em matrix Jacobi process} $J_t^N$ associated to the projections $P^N,Q^N\in\M_N$, is given by
\begin{equation} \label{e.Jacobi.process} J^N_t = Q^N (U^N_t)^{\ast} P^N U^N_t Q^N \end{equation}
where $(U^N_t)_{t\ge 0}$ is a Brownian motion in $\U_N$.  (Typically $P^N,Q^N$ are deterministic; they may also be chosen randomly, in which case $U^N_t$ must be chosen independent from them.)  This is a diffusion process in $\M_N^{[0,1]}$: it lives a.s.\ in the space of matrices $M\in\M_N$ with $0\le M\le 1$ (i.e.\ $M$ is self-adjoint and has eigenvalues in $[0,1]$).  Note that the initial value is $J_0^N = Q^NP^NQ^N$, the operator-valued angle between the subspaces in the images of $P^N,Q^N$.  In particular, the Jacobi process records (through its eigenvalues) the evolution of the principal angles between two subspaces as they are continuously rotated by a unitary Brownian motion.

In the case $N=1$, the process \eqref{e.Jacobi.process} precisely corresponds to what is classically known as the Jacobi process: the Markov process on $[0,1]$ with generator $\mathscr{L} = x(x-1)\frac{\del^2}{\del x^2} - (cx+d)\frac{\del}{\del x}$, where $c=2\min\{\alpha,\beta\}-1$, $d=|\alpha-\beta|$.  This is where the name comes from, as the orthogonal polynomials associated to this Markov process are the Jacobi polynomials, cf.\ \cite{DemniZani2009}.

\begin{remark} Comparing to Theorem \ref{t.Collins.Jacobi}, we have now compressed the projections and the Brownian motion into $\M_N$ from the start.  We could instead formulate the process as in that theorem by choosing projections and Brownian motion in a larger space, which would have the effect of using a ``corner'' of a higher-dimensional Brownian motion instead of $U^N_t$.  While this makes a difference for the finite-dimensional distribution, it does not affect the large-$N$ behavior.  \end{remark}

This brings us to our main application.  First note that, from our main Theorem \ref{t.main.2}, the Jacobi process converges strongly.

\begin{corollary} Let $P^N,Q^N$ be deterministic orthogonal projections in $\M_N$, and suppose $\{P^N,Q^N\}$ converges strongly to $\{p,q\}$.  Let $(u_t)_{t\ge 0}$ be a free unitary Brownian motion freely independent from $p,q$.  Then for each $t\ge 0$ the Jacobi process marginal $J^N_t$ converges strongly to $j_t = qu_t^\ast p u_t q$.  What's more, if $f\in C[0,1]$ is any continuous test function, then $\|f(J^N_t)\|\to \|f(j_t)\|$ a.s.\ as $N\to\infty$.  \end{corollary}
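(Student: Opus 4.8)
The plan is to read this corollary off from Theorem~\ref{t.main.2} using two soft facts: strong convergence is stable under applying a fixed noncommutative $\ast$-polynomial, and, for self-adjoint limits whose spectra lie in a fixed compact set, it upgrades from polynomials to continuous test functions by the Weierstrass theorem, exactly as in Remark~\ref{r.strong.conv}. Concretely, $J^N_t = Q^N (U^N_t)^\ast P^N U^N_t Q^N$ is the value at the tuple $(P^N,Q^N,U^N_t,(U^N_t)^\ast)$ of the fixed $\ast$-polynomial $\Pi(X,Y,Z,W) = YWXZY$, and likewise $j_t = \Pi(p,q,u_t,u_t^\ast)$. Since $P^N,Q^N$ are deterministic they are trivially independent from $(U^N_t)_{t\ge 0}$, so Theorem~\ref{t.main.2} applies with $n=2$, $(A^N_1,A^N_2)=(P^N,Q^N)$, and $k=1$, $t_1=t$; it gives strong convergence of $(P^N,Q^N,U^N_t)$ to $(p,q,u_t)$, and, since the proof of Theorem~\ref{t.main.2} runs through Lemma~\ref{l.f(X)} (which accepts arbitrary continuous functions of the self-adjoint $\mathrm{GUE}^N$ approximants, in particular the conjugates $\overline{f_{s_j}}$), one in fact gets strong convergence of the full $\ast$-generating tuple $(P^N,Q^N,U^N_t,(U^N_t)^\ast)$ to $(p,q,u_t,u_t^\ast)$.

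Next I would invoke polynomial stability: if $\mx B^N$ converges strongly to $\mx b$, then for any fixed $\ast$-polynomial $\Pi$ the one-variable ensemble $\Pi(\mx B^N)$ converges strongly to $\Pi(\mx b)$, since every polynomial in the single variable $\Pi(\mx B^N)$ is a $\ast$-polynomial in $\mx B^N$, so that its operator norm and all its trace moments converge. Applied with the $\Pi$ above, this yields at once that $J^N_t$ converges strongly to $j_t = q u_t^\ast p u_t q$, which is the first assertion.

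For the last statement I would mimic Remark~\ref{r.strong.conv}, the only simplification being that all the spectra in sight lie in the fixed compact interval $[0,1]$: with $M = P^N U^N_t Q^N$ one has $J^N_t = M^\ast M$, hence $0\le J^N_t\le I_N$ and likewise $0\le j_t\le 1$, so $\mathrm{spec}(J^N_t),\mathrm{spec}(j_t)\subseteq[0,1]$. Work on the probability-one event on which $\|R(J^N_t)\|\to\|R(j_t)\|$ for every polynomial $R$ simultaneously --- the upgrade from ``each $R$'' to ``all $R$ at once'' being the standard one, restricting to the countable family of polynomials with coefficients in $\mathbb{Q}+i\mathbb{Q}$ and using continuity of the norm in the coefficients together with $\sup_N\|J^N_t\|\le 1$. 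Given $f\in C[0,1]$ and $\e>0$, pick by Weierstrass a polynomial $p$ with $\|p-f\|_{L^\infty[0,1]}<\e/3$; then $\|p(J^N_t)-f(J^N_t)\|<\e/3$ and $\|p(j_t)-f(j_t)\|<\e/3$ by spectral calculus, and combining with $\big|\,\|p(J^N_t)\|-\|p(j_t)\|\,\big|<\e/3$ for large $N$ and the reverse triangle inequality gives $\big|\,\|f(J^N_t)\|-\|f(j_t)\|\,\big|<\e$ for large $N$; hence $\|f(J^N_t)\|\to\|f(j_t)\|$ almost surely.

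I do not expect a genuine obstacle: the corollary is a formal consequence of Theorem~\ref{t.main.2}. The two points deserving a sentence are (i) making sure one has the $\ast$-version of Theorem~\ref{t.main.2}, so that $(U^N_t)^\ast$ is available as a coordinate of the tuple out of which $J^N_t$ is built, which is immediate from the route through Lemma~\ref{l.f(X)} noted above, and (ii) the harmless simultaneity upgrade of the almost-sure events. It is worth remarking that, in contrast to the unitary marginal of Theorem~\ref{t.main.1}, no separate ``hard edge'' input is needed here, since $\mathrm{spec}(J^N_t)\subseteq[0,1]$ holds automatically for every $N$.
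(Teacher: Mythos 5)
Your proposal is correct and follows essentially the same route as the paper: invoke Theorem~\ref{t.main.2} with $(A^N_1,A^N_2)=(P^N,Q^N)$, $n=2$, $k=1$, read off strong convergence of $J^N_t$ to $j_t$ because $J^N_t$ is a fixed $\ast$-polynomial in the converging tuple, and then upgrade to $f\in C[0,1]$ by Weierstrass as in Remark~\ref{r.strong.conv}. The paper's own proof is a two-sentence statement of exactly this; your version merely spells out the $\ast$-polynomial $\Pi$, the observation that $\mathrm{spec}(J^N_t)\subseteq[0,1]$ automatically, and the standard countable-dense-family almost-sure upgrade --- all of which are implicit in the paper. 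Your remark (i) that one should confirm $(U^N_t)^\ast$ is genuinely available as a coordinate is a fair point of care (the paper's definition of strong convergence is stated without explicit $\ast$), and your observation that Lemma~\ref{l.f(X)} accepting arbitrary continuous functions of the self-adjoint $\mathrm{GUE}^N$ approximants supplies it (e.g.\ via $\overline{f_{s_j}}$, or simply because $(V^N_{s_j})^\ast$ is the continuous function $z\mapsto \bar z$ of the unitary $V^N_{s_j}$) is the right resolution.
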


\begin{proof} \label{c.strong.conv.Jacobi} The strong convergence statement is an immediate corollary to Theorem \ref{t.main.2}, with $A^N_1 = P^N$, $A^N_2 = Q^N$, and $n=2,k=1$.  The extension to continuous test functions beyond polynomials is then an elementary Weierstrass approximation argument.  \end{proof}

\begin{example} \label{eg.Jacobi} For fixed $k\in\N$, select two orthogonal projections $P,Q\in\M_k$.  Then define $P^N,Q^N\in\M_{kN}$ by $P^N = P\tensor I^N$ and $Q^N = Q\tensor I^N$.  (Here we are identifying $\M_k\tensor\M_N\cong \M_{kN}$ via the Kronecker product.)  If $F$ is a noncommutative polynomial in two variables, then
\[ F(P^N,Q^N) = F(P,Q)\tensor I^N \]
and it follows immediately that $\{P^N,Q^N\}$ converges strongly to $\{P,Q\}$ (i.e.\ the $W^\ast$-probability space can be taken to be $(\M_k,\tr)$).  Expanding this space to include a free unitary Brownian motion freely independent from $\{P,Q\}$ and setting $j_t = Qu_t^\ast P u_t Q$, Corollary \ref{c.strong.conv.Jacobi} yields that the Jacobi process $J^{kN}_t$ with initial value $Q^NP^NQ^N$ converges strongly to $j_t$.

Figure \ref{fig spec Jacobi} illustrates the eigenvalues of $J^{kN}_t$ with $k=4$, $N=100$, and initial projections given by
\[ P = \left[\begin{array}{cc} 0.2 & 0.4 \\ 0.4 & 0.8 \end{array}\right]\tensor \left[\begin{array}{cc} 1 & 0 \\ 0 & 0 \end{array}\right]  +  \left[\begin{array}{cc} 0.8 & 0.4 \\ 0.4 & 0.2 \end{array}\right]\tensor \left[\begin{array}{cc} 0 & 0 \\ 0 & 1 \end{array}\right], \qquad  Q = \left[\begin{array}{cc} 1 & 0 \\ 0 & 0 \end{array}\right]\tensor I_2 \]
which have been selected so that the initial operator-valued angle $QPQ$ has non-trivial eigenvalues $0.2$ and $0.8$; this therefore holds as well for $Q^NP^NQ^N$ for all $N$.  This implies that the subspaces $P^N(\C^{kN})$ and $Q^N(\C^{kN})$ have precisely two distinct principal angles.
\end{example}

\begin{figure}[htbp]
\begin{center}
\includegraphics[scale=0.208]{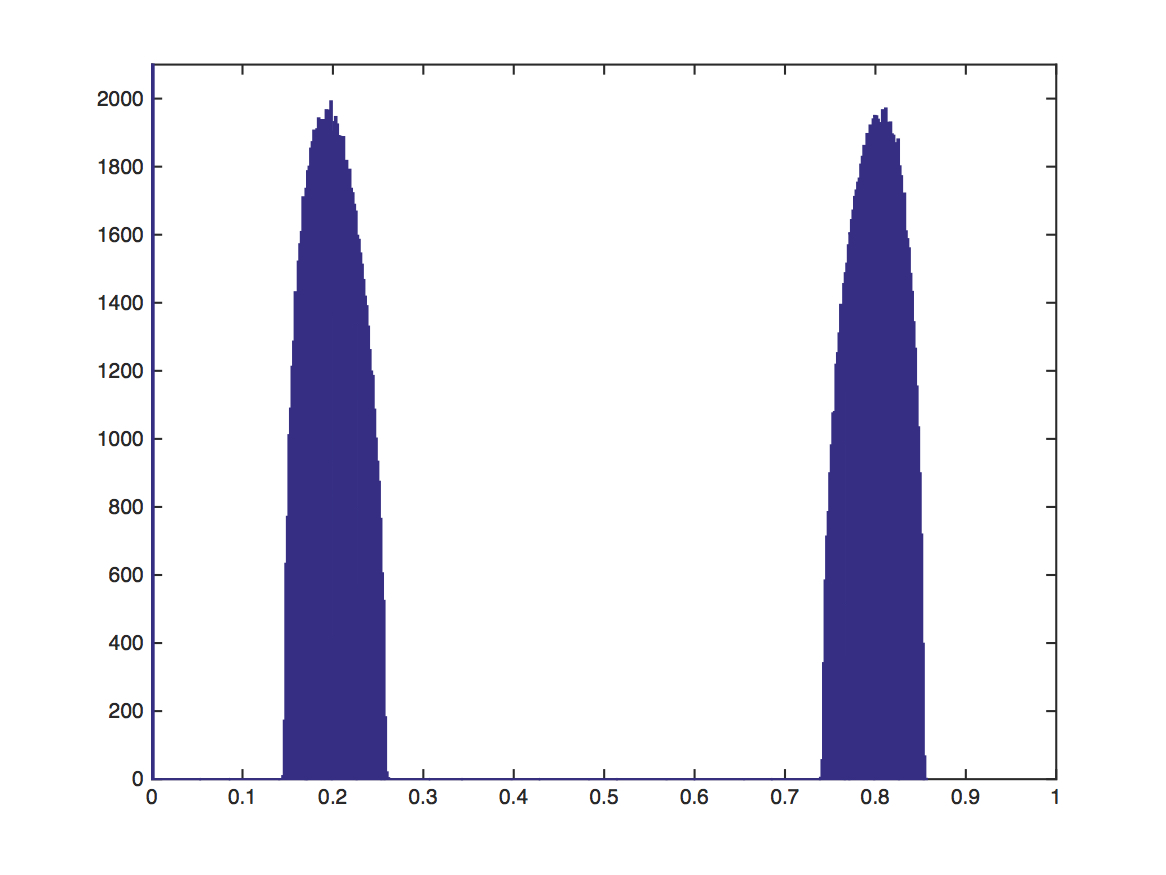}
\includegraphics[scale=0.208]{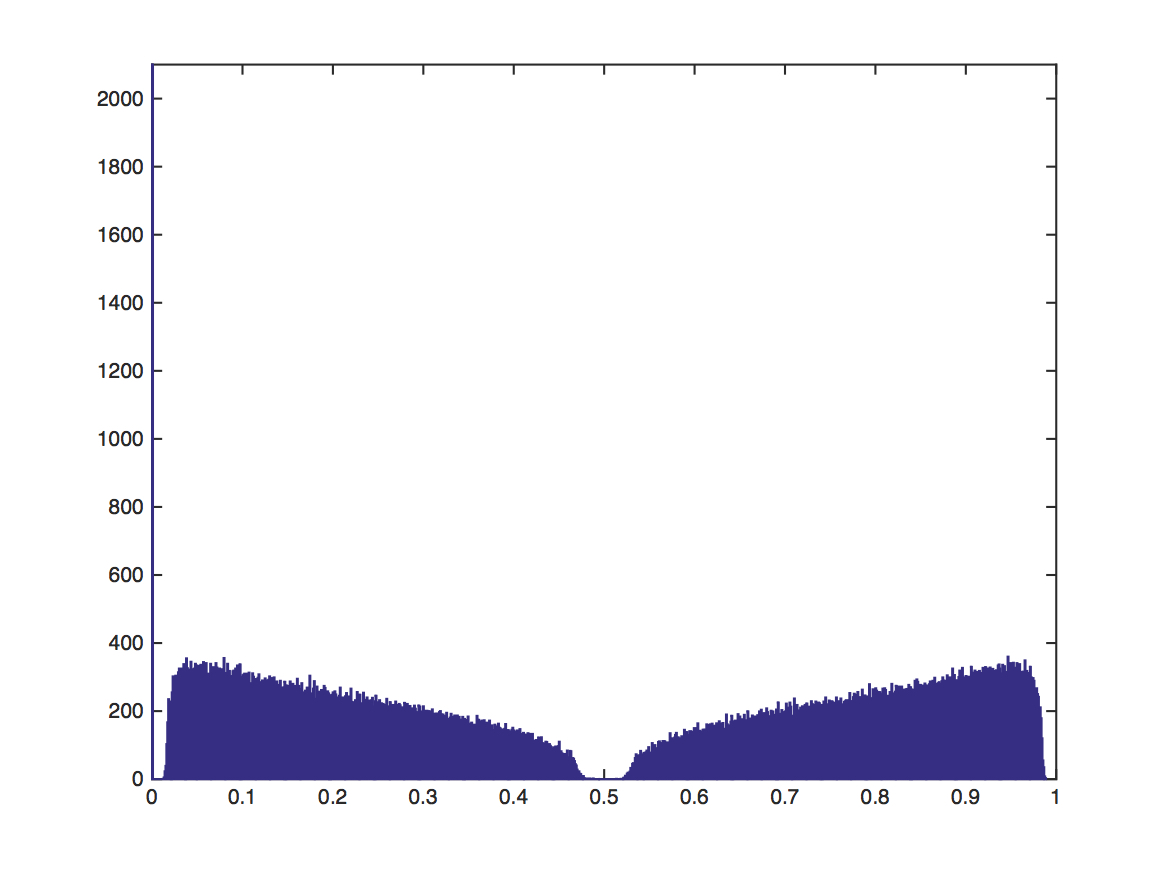}
\caption{\label{fig spec Jacobi} The spectral distribution of the Jacobi process $J^{kN}_t$ of Example \ref{eg.Jacobi} with $k=4$, $N=100$ and times $t=0.01$ (on the left) and $t=0.25$ (on the right).  The histograms were made with $1000$ trials each, yielding $4\times10^5$ eigenvalues sorted into $1000$ bins.}
\end{center}
\end{figure}

As is plainly visible in Figure \ref{fig spec Jacobi}, for small time, the eigenvalues (which are fixed trigonometric polynomials in the principal angles) stay close to their initial values.  That is: despite the fact that the diffusion's measure is fully supported on $\M_N^{[0,1]}$ for every $t,N>0$, the eigenvalues move with finite speed for all large $N$.  That is our final theorem.

\begin{theorem} \label{t.final} For each $N\ge 1$, let $(U^N_t)_{t\ge 0}$ be a Brownian motion on $\U_N$, let $\mathbb{V}^N$ and $\mathbb{W}^N$ be subspaces of $\C^N$, and suppose that the orthogonal projections onto these subspaces converge jointly strongly as $N\to\infty$.  Suppose there is a fixed finite set $\boldsymbol{\theta} = \{\theta_1,\ldots,\theta_k\}$ of angles so that all principal angles between $\mathbb{V}^N$ and $\mathbb{W}^N$ are in $\boldsymbol{\theta}$ for all $N$.  Then, for any open neighborhood $\mathscr{O}$ of $\boldsymbol{\theta}$, there is a $T>0$ so that, for $0\le t\le T$, it is a.s.\ true that for all large $N$, {\em all} principal angles between $U^N_t(\mathbb{V}^N)$ and $\mathbb{W}^N$ in $\mathscr{O}$.
\end{theorem}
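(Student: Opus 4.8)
The plan is to recast the conclusion in terms of the matrix Jacobi process $J^N_t=Q^N(U^N_t)^\ast P^NU^N_tQ^N$, where $P^N,Q^N$ are the orthogonal projections onto $\mathbb V^N,\mathbb W^N$, and then to combine the strong convergence of the Jacobi process (Corollary~\ref{c.strong.conv.Jacobi}, which rests on Theorem~\ref{t.main.2}) with the ``islands stay separated'' part of Theorem~\ref{t.CK2014}. Put $\alpha=\lim_N\tr P^N$ and $\beta=\lim_N\tr Q^N$. The eigenvalues of $J^N_t$ lying in the open interval $(0,1)$ are exactly $\cos^2$ of the principal angles in $(0,\pi/2)$ between $\mathbb W^N$ and $(U^N_t)^{-1}(\mathbb V^N)$; since the heat kernel measure is symmetric, these angles have, for each fixed $t$, the same distribution as the angles between $U^N_t(\mathbb V^N)$ and $\mathbb W^N$ appearing in the statement, so it is enough to control the former. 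The principal angles equal to $0$ or $\pi/2$ are forced by the dimension count, and — for a fixed $t>0$, by genericity of the heat-kernel-distributed $U^N_t$ (whose law is absolutely continuous with respect to Haar measure) — a.s.\ take their minimal values, hence are no larger than at $t=0$. Since $\theta\mapsto\cos^2\theta$ is a homeomorphism of $[0,\pi/2]$ onto $[0,1]$, it therefore suffices to prove: for every open neighbourhood $\mathscr U$ of $\cos^2(\boldsymbol\theta)$ in $[0,1]$ there is $T>0$ such that, for each $t\in[0,T]$, almost surely for all large $N$ one has $\mathrm{spec}(J^N_t)\subseteq\{0,1\}\cup\mathscr U$. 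By hypothesis, $\mathrm{spec}(J^N_0)\subseteq\{0,1\}\cup\cos^2(\boldsymbol\theta)$ for every $N$.

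First I would pin down the limiting spectral measure $\mu_t=\mathrm{Law}_{j_t}$, $j_t=qu_t^\ast pu_tq$. By Corollary~\ref{c.strong.conv.Jacobi}, for each fixed $t\ge0$ we have $\|f(J^N_t)\|\to\|f(j_t)\|$ a.s.\ for every $f\in C[0,1]$. Taking $t=0$: if $f\in C[0,1]$ vanishes on $\{0,1\}\cup\cos^2(\boldsymbol\theta)$ then $f(J^N_0)=0$ for every $N$, hence $\|f(j_0)\|=0$; letting $f$ run over all such functions gives $\supp\mu_0\subseteq\{0,1\}\cup\cos^2(\boldsymbol\theta)$. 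Consequently, in the decomposition $\mu_0=(1-\min\{\alpha,\beta\})\delta_0+\max\{\alpha+\beta-1,0\}\delta_1+\widetilde\mu_0$ of Theorem~\ref{t.CK2014}, the positive measure $\widetilde\mu_0$ is finitely supported, with $\supp\widetilde\mu_0\subseteq\cos^2(\boldsymbol\theta)$.

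Next I would invoke ``islands stay separated''. Shrinking $\mathscr U$, we may assume $\mathscr U\cap(0,1)=\bigsqcup_jI_j$ is a finite disjoint union of open intervals, one about each point of $\cos^2(\boldsymbol\theta)\cap(0,1)$. Applying the two-interval form of Theorem~\ref{t.CK2014} with $t_0=0$ to the finitely many ways of splitting $\supp\widetilde\mu_0\cap(0,1)$ into an initial and a final run of points — equivalently, by the evident extension of that statement to any finite number of islands, proved the same way — and intersecting the resulting time-windows, we obtain $T>0$ with $\supp\widetilde\mu_t\cap(0,1)\subseteq\bigsqcup_jI_j$ for all $t\in[0,T]$. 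Since the atoms of $\mu_t$ at $0$ and $1$ do not vary with $t$, this gives $\supp\mu_t\subseteq\{0,1\}\cup\bigsqcup_jI_j$ for $t\in[0,T]$. Now fix such a $t$. The set $K:=[0,1]\setminus(\{0,1\}\cup\bigsqcup_jI_j)$ is compact and disjoint from $\supp\mu_t=\mathrm{spec}(j_t)$, so taking $f\in C[0,1]$ with $f\equiv1$ on $K$ and $f\equiv0$ on a neighbourhood of $\supp\mu_t$ gives $\|f(j_t)\|=0$, hence $\|f(J^N_t)\|\to0$ a.s.; but $\|f(J^N_t)\|=1$ whenever $\mathrm{spec}(J^N_t)\cap K\ne\emptyset$, so $\mathrm{spec}(J^N_t)\subseteq\{0,1\}\cup\bigsqcup_jI_j\subseteq\{0,1\}\cup\mathscr U$ a.s.\ for all large $N$. (This last deduction is the bump-function argument of Remark~\ref{r.strong.conv}, transported from the circle to $[0,1]$.)

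The step I expect to be the main obstacle is the bookkeeping around the endpoints $0$ and $1$. The eigenvalues of $J^N_t$ at $0$ and at $1$ combine ``trivial'' multiplicities forced purely by $\dim\mathbb V^N$ and $\dim\mathbb W^N$ — which correspond to no principal angle and are absorbed harmlessly into the good set $\{0,1\}$ — with genuine ones coming from angles $\pi/2$ or $0$; one must verify, through the genericity of the heat-kernel-distributed $U^N_t$ at fixed $t>0$, that these genuine multiplicities never exceed their $t=0$ values (which, by hypothesis, correspond to angles in $\boldsymbol\theta\subseteq\mathscr O$). A related subtlety is that when $0\in\boldsymbol\theta$ or $\pi/2\in\boldsymbol\theta$, the corresponding mass of $\mu_0$ sits at the endpoint $1$ or $0$ rather than inside $(0,1)$, so the two-interval form of Theorem~\ref{t.CK2014} has to be supplemented by a short separate argument confining $\widetilde\mu_t$ near that endpoint for small time. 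By contrast, the substantive analytic content — that the islands of the limiting spectrum neither coalesce nor migrate away from their $t=0$ positions in finite time — is already delivered by Theorem~\ref{t.CK2014}, and the passage from the deterministic limit measure $\mu_t$ to the random eigenvalues of $J^N_t$ is precisely what strong convergence (Corollary~\ref{c.strong.conv.Jacobi}) affords.
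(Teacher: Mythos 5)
Your proposal follows essentially the same route as the paper's proof: rephrase the statement in terms of the Jacobi process $J^N_t=Q^N(U^N_t)^\ast P^NU^N_tQ^N$, use bump functions and Corollary~\ref{c.strong.conv.Jacobi} at $t=0$ to pin down that $j_0$ has pure point spectrum in $\{0,1\}\cup\cos^2(\boldsymbol\theta)$, invoke the ``islands stay separated'' part of Theorem~\ref{t.CK2014} (iterated over finitely many islands) to confine $\mathrm{spec}(j_t)$ near that set for small $t$, and then transfer back to $\mathrm{spec}(J^N_t)$ for large $N$ via the bump-function argument supplied by strong convergence. The paper's proof does exactly this, just more tersely.

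You are more explicit than the paper on two technical points. First, you observe that as written $J^N_t$ records angles between $(U^N_t)^{-1}(\mathbb V^N)$ and $\mathbb W^N$, and invoke the symmetry of the heat kernel ($U^N_t\overset{d}{=}(U^N_t)^{-1}$) to match the statement's $U^N_t(\mathbb V^N)$; the paper passes over this silently. Second, you correctly flag that Theorem~\ref{t.CK2014} is stated only for intervals $I_1,I_2\subset(0,1)$, so when $0$ or $\pi/2$ lies in $\boldsymbol\theta$ (giving $\widetilde\mu_0$ extra mass at an endpoint beyond the universal atoms) a short additional confinement argument near $\{0,1\}$ is needed. You identify this gap but do not close it; the paper's proof has the same gap, disguised in the phrase ``by (induction on) Theorem~\ref{t.CK2014}.'' Your remark about the eigenvalues of $J^N_t$ at $0$ and $1$ not gaining multiplicity for $t>0$ (by genericity of the heat-kernel law, absolutely continuous with respect to Haar) is correct and is the right way to patch the endpoint issue on the random-matrix side; for the endpoint behaviour of the limit measure $\widetilde\mu_t$ one still needs a brief argument beyond the two-island statement as quoted, which neither you nor the paper actually supplies. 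Apart from that shared omission, your proof is sound.
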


\begin{proof} Let $P^N$ and $Q^N$ be the projections onto $\mathbb{V}^N$ and $\mathbb{W}^N$.  Then there is a fixed list $\boldsymbol{\lambda} = \{\lambda_1,\ldots,\lambda_k\}$ in $[0,1]$ so that all eigenvalues of $Q^NP^NQ^N$ are in $\boldsymbol{\lambda}$.  (The eigenvalues $\lambda_j$ are certain fixed trigonometric polynomials in $\boldsymbol{\theta}$).   Let $J^N_t$ be the Jacobi process associated to $P^N,Q^N$, and let $j_t$ be the associated large-$N$ limit.  By Corollary \ref{c.strong.conv.Jacobi}, for any $t\ge 0$ and any $f\in C[0,1]$, $\|f(J^N_t)\|\to \|f(j_t)\|$ a.s.\ as $N\to\infty$.

Applying this at time $t=0$, let $\lambda_i,\lambda_j\in\boldsymbol{\lambda}$ with $\lambda_i<\lambda_j$ such that no elements of $\boldsymbol{\lambda}$ are in the interval $(\lambda_i,\lambda_j)$.  Now let $f$ be a continuous bump function supported in $(\lambda_i,\lambda_j)$.  Then $f(J^N_0) = 0$, and it therefore follows that $\|f(j_0)\|=0$. As this holds for all bump function supported in $(\lambda_i,\lambda_j)$, it follows that $\mathrm{spec}(j_0)$ does not intersect $(\lambda_i,\lambda_j)$.  Thus $j_0$ has pure point spectrum precisely equal to $\boldsymbol{\lambda}$.

Now, fix any $\e>0$; by (induction on) Theorem \ref{t.CK2014}, for sufficiently small $t>0$, $\mathrm{spec}(j_t)$ is contained in $\boldsymbol{\lambda}_\e$ (the union of $\e$-balls centered at the points of $\boldsymbol{\lambda}$).  Now, suppose (for a contradiction) that, for some $N_0$, $J^{N_0}_t$ possesses an eigenvalue $\lambda\in (0,1)\setminus\boldsymbol{\lambda}_\e$.  Let $g$ be a bump function supported in $(0,1)\setminus\boldsymbol{\lambda}_\e$ that is equal to $1$ on a neighborhood of $\lambda$; then $\|g(J^{N_0}_t)\|\ge 1$.  But, by Corollary \ref{c.strong.conv.Jacobi}, we know $\|g(J^N_t)\|\to \|g(j_t)\| = 0$ a.s.\ as $N\to\infty$.  Thus, for all sufficiently large $N$, $\|g(J^N_t)\|<1$, which implies that $\lambda$ is not an eigenvalue.  As this holds for any point in $(0,1)\setminus\boldsymbol{\lambda}_\e$, it follows that $\mathrm{spec}(J^N_t)$ is almost surely contained in $\boldsymbol{\lambda}_\e$ for all sufficiently large $N$.

The result now follows from the fact that the principal angles between $U^N_t(\mathbb{V})$ and $\mathbb{W}$ are fixed continuous functions (trigonometric polynomials) in the eigenvalues of $J^N_t$. \end{proof}

\subsection*{Acknowledgments} This paper was initiated during a research visit of BC to TK in late 2012; substantial subsequent progress was made during a one-month visit of AD to BC at AIMR, Sendai during the fall of 2013.  We would like to express our great thanks to UC San Diego and AIMR for their warm hospitality, fruitful work environments, and financial support that aided in the completion of this work.  Finally, we would like to thank Ioana Dumitriu for helping to design the simulations that produced the figures.

\bibliographystyle{acm}
\bibliography{CDK}

\end{document}